 \newtheorem{thm}{Theorem}[section]
 \newtheorem{prop}[thm]{Proposition}
 \newtheorem{lem}[thm]{Lemma}
 \newtheorem{cor}[thm]{Corollary}
\theoremstyle{definition}
 \newtheorem{exm}[thm]{Example}
 \newtheorem{dfn}[thm]{Definition}
\theoremstyle{remark}
 \newtheorem{rem}[thm]{Remark}
 \numberwithin{equation}{section}
\theoremstyle{definition}
\theoremstyle{remark}
 \numberwithin{equation}{section}
\renewcommand{\le}{\leqslant}
\renewcommand{\ge}{\geqslant}\renewcommand{\geq}{\geqslant}
\newcommand{\bbC}{\mathbb{C}}
\newcommand{\bbQ}{\mathbb{Q}}
\newcommand{\bbR}{\mathbb{R}}
\newcommand{\bbZ}{\mathbb{Z}}   
\newcommand{\cN}{\mathcal{N}}
\renewcommand{\and}{\quad \mbox{and} \quad}  
\renewcommand{\le}{\leqslant}
\renewcommand{\ge}{\geqslant}\renewcommand{\geq}{\geqslant}
\title[Local constants for Heisenberg representations]{Local constants for Heisenberg representations}
\subjclass[2010]{11S37; 22E50\\Keywords: Extendible functions, Local constants, $\lambda$-factors, Classical Gauss sums,
Heisenberg representations, Artin conductors, Swan conductors.}
\author[Biswas]{\bfseries Sazzad Ali Biswas}
\address{
School of Mathematics and Statistics\\ 
University of Hyderabad   \\ 
Hyderabad, 500046\\
India}
\email{sazzad.biswas@uohyd.ac.in, sazzad.jumath@gmail.com}
\thanks{The  author is partially supported by IMU-Berlin Einstein Foundation, Berlin, Germany and CSIR, Delhi, India }
\begin{document}

\vspace{18mm}
\setcounter{page}{1}
\thispagestyle{empty}

\begin{abstract}
We can attach a local constant to every finite dimensional continuous complex representation of a 
local Galois group of a non-archimedean local field $F/\bbQ_p$ by Deligne and Langlands.
Tate \cite{JT1} gives an explicit formula for computing local constants for linear characters of $F^\times$, 
but there is no explicit formula of local constant for any arbitrary representation of a local Galois group.
In this article we study Heisenberg representations of the absolute Galois group $G_F$ of $F$
and give invariant formulas of local constants for Heisenberg
representations of dimension prime to $p$.

\end{abstract}

\maketitle

\section{\textbf{Introduction}}

Let $F$ be a non-archimedean local field 
(i.e., finite extension of the $p$-adic field $\mathbb{Q}_p$, for some prime $p$). Let $\overline{F}$ be an algebraic closure
of $F$, and $G_F:=\rm{Gal}(\overline{F}/F)$ be the absolute Galois group of $F$. Let
\begin{center}
 $\rho:G_F\to \mathrm{Aut}_{\mathbb{C}}(V)$ 
\end{center}
be a finite dimensional continuous complex representation of the Galois group $G_F$. For this
$\rho$, we can associate a constant 
$W(\rho)$ with absolute value $1$ by Langlands (cf. \cite{RL}) and Deligne (cf. \cite{D1}).
This constant is called the \textbf{local constant} (also known as local epsilon factor) of the representation $\rho$. 
Langlands also proves that 
these local constants are weakly extendible functions (cf. \cite{JT1}, p. 105, Theorem 1).

The existence of this local constant is proved by Tate for one-dimensional representation in  
\cite{JT3} and 
the general proof of the existence of the local constants is proved by Langlands (see \cite{RL}). 
In 1972 Deligne also gave a proof  using global methods in \cite{D1}.
But in Deligne's terminology this local constant $W(\rho)$ is 
 $\epsilon_{D}(\rho,\psi_F,\mathrm{dx},1/2)$, where $\mathrm{dx}$ is the Haar
 measure on $F^{+}$ (locally compact abelian group) which is self-dual with respect to the {\bf canonical}
 (i.e., coming through trace map from $\psi_{\bbQ_p}(x):=e^{2\pi ix}$ for all $x\in\bbQ_p$, see \cite{JT1}, p. 92)
 additive character $\psi_F$ of $F$. 
Tate in his article \cite{JT2} denotes this Langlands convention of local constants as $\epsilon_{L}(\rho,\psi)$. 
According to Tate (cf. \cite{JT2}, p. 17),
 the Langlands factor $\epsilon_{L}(\rho,\psi)$ is 
 $\epsilon_{L}(\rho,\psi)=\epsilon_{D}(\rho\omega_{\frac{1}{2}},\psi,\mathrm{dx_{\psi}})$, where $\omega$ denotes the normalized 
 absolute value of $F$, i.e., $\omega_{\frac{1}{2}}(x)=|x|_{F}^{\frac{1}{2}}=q_{F}^{-\frac{1}{2}\nu_{F}(x)}$ 
 which we may consider as a character of $F^\times$, and where
 $\mathrm{dx_{\psi}}$ is the self-dual Haar measure corresponding to the additive character $\psi$ and 
 $q_F$ is the cardinality of the residue field of $F$. According to Tate (cf. \cite{JT1}, p. 105) 
 the relation among three conventions of the local constants is:
 \begin{equation}
  W(\rho)=\epsilon_{L}(\rho,\psi_F)=\epsilon_{D}(\rho\omega_{\frac{1}{2}},\psi_F,\mathrm{dx_{\psi_F}}).
 \end{equation}

In Section 2, we discuss all the necessary notations and known results for this article. In Section 3 we study the arithmetic
description of Heisenberg representations and their 
determinants (cf. Proposition \ref{Proposition arithmetic form of determinant}) of the absolute Galois group $G_F$ of a 
local field $F/\bbQ_p$. In particular, the Heisenberg representations of dimension prime to $p$ are important for this article.
In Subsection 3.2, we study the various properties (e.g., Artin conductors, Swan conductors, dimension)
of Heisenberg representations of dimension prime to $p$.

In Section 4, firstly, we give an invariant formula of local constant for a Heisenberg representation 
$\rho$ of the absolute Galois group 
$G_F$ of a non-archimedean local field $F/\bbQ_p$ (cf. Theorem \ref{Theorem invariant odd}).
In Theorem \ref{invariant formula for minimal conductor representation}, we give an invariant formula
of local constant of a minimal conductor Heisenberg representation $\rho$ of dimension prime to $p$. And when 
$\rho$ is not minimal conductor but dimension is prime to $p$, we have Theorems \ref{Theorem invariant for non minimal representation},
\ref{Theorem using Deligne-Henniart}.

In Section 5, we also discuss Tate's root-of-unity criterion, and by applying this Tate's criterion we give some information about 
the dimension and Artin conductor of a Heisenberg representation (cf. Proposition \ref{Proposition 4.12}).

\section{\textbf{Notations and Preliminaries}}

\subsection{Abelian Local Constants}

We have explicit formula of abelian local constants due to Tate (cf. \cite{JT1}, pp. 93-94). 
Let $F$ be a non-archimedean local field.
Let $O_F$ be the 
ring of integers of the local field $F$
and $P_F=\pi_F O_F$ be a prime ideal in $O_F$, 
where $\pi_F$ is a uniformizer, i.e., an element in $P_F$ whose valuation is one, i.e.,
 $v_F(\pi_F)=1$.
The order of the residue field 
of $F$ is $q_F$. Let $U_F=O_F-P_F$ be the group of units in $O_F$.
Let $P_{F}^{i}=\{x\in F:v_F(x)\geq i\}$ and for $i\geq 0$ define $U_{F}^{i}=1+P_{F}^{i}$
(with proviso $U_{F}^{0}=U_F=O_{F}^{\times}$).

Let $\chi$ be a character of $F^\times$ with conductor $a(\chi)$, i.e., the smallest integer such that $\chi$ is trivial on 
$U_{F}^{a(\chi)}$. Let $\psi$ be an additive character of $F$ with conductor $n(\psi)$, i.e., $\psi$ is trivial on $P_{F}^{-n(\psi)}$,
nontrivial on $P_{F}^{-n(\psi)-1}$. Then the local constant of $\chi$ is (cf. \cite{JT1}, p. 94):
\begin{equation}\label{eqn 2.9}
 W_F(\chi,\psi)=\chi(c)q_{F}^{-\frac{a(\chi)}{2}}\sum_{x\in \frac{U_F}{U_{F}^{a(\chi)}}}\chi^{-1}(x)\psi(\frac{x}{c}),
\end{equation}
where $c=\pi_{F}^{a(\chi)+n(\psi)}$.

\begin{dfn}[\textbf{Different and Discriminant}] 
 Let $K/F$ be a finite separable extension of non-archimedean local field $F$. We define the \textbf{inverse different (or codifferent)}
 $\mathcal{D}_{K/F}^{-1}$ of $K$ over $F$ to be $\pi_{K}^{-d_{K/F}}O_K$, where $d_{K/F}$ is the largest integer (this is the 
 exponent of the different $\mathcal{D}_{K/F}$) such that 
 \begin{center}
  $\mathrm{Tr}_{K/F}(\pi_{K}^{-d_{K/F}}O_K)\subseteq O_F$,
 \end{center}
 where $\rm{Tr}_{K/F}$ is the trace map from $K$ to $F$.
Then the \textbf{different} is defined by:
\begin{center}
 $\mathcal{D}_{K/F}=\pi_{K}^{d_{K/F}}O_K$
\end{center}
and the \textbf{discriminant} $D_{K/F}$ is 
\begin{center}
 $D_{K/F}=N_{K/F}(\pi_{K}^{d_{K/F}})O_F$.
\end{center}
 If $K/F$ is 
tamely ramified, then 
\begin{equation}\label{eqn 2.2}
 \nu_K(\mathcal{D}_{K/F})=d_{K/F}=e_{K/F} - 1.
\end{equation}
\end{dfn}

 \subsection{Extendible functions}
 
Let $G$ be any finite group. We denote $R(G)$ the set of all pairs $(H,\rho)$, where $H$ is a subgroup of $G$ and $\rho$ is a
virtual representation of $H$
. The group $G$ acts on $R(G)$ by means of
\begin{center}
$(H,\rho)^g=(H^g,\rho^g)$,     $g\in G$,\\
$\rho^g(x)=\rho(gxg^{-1})$,   $x\in H^g:=g^{-1}Hg$
\end{center}
Furthermore we denote by $\widehat{H}$ the set of all one dimensional representations of $H$ and 
by $R_1(G)$ the subset of $R(G)$ of pairs $(H,\chi)$ with 
$\chi\in \widehat{H}$. Here character $\chi$ of $H$ we mean always a \textbf{linear} 
character, i.e., $\chi:H\to \mathbb{C}^\times$. 

Now define a function $\mathcal{F}:R_1(G) \rightarrow \mathcal{A}$, where $\mathcal{A}$ is a multiplicative abelian  group with
\begin{equation}
\mathcal{F}(H,1_H)=1\label{eqn 2.1}
\end{equation}
 and 
\begin{equation}
\mathcal{F}(H^g,\chi^g)=\mathcal{F}(H,\chi)\label{eqn 2.2}
\end{equation}
for all $(H,\chi)$, where $1_H$ denotes the trivial representation of $H$.\\
Here a function $\mathcal{F}$ on $R_1(G)$ means a function which satisfies the equation (\ref{eqn 2.1}) 
and (\ref{eqn 2.2}).

A function $\mathcal{F}$ is said to be extendible if $\mathcal{F}$ can be extended to 
an $\mathcal{A}$-valued 
function
on $R(G)$ satisfying: 
\begin{equation}\label{eqn 2.3}
 \mathcal{F}(H,\rho_1+\rho_2)=\mathcal{F}(H,\rho_1)\mathcal{F}(H,\rho_2)
\end{equation}
for all $(H,\rho_i)\in R(G),i=1,2$, and if $(H,\rho)\in R(G)$ with $\mathrm{dim}\,\rho=0$, and $\Delta$ is a subgroup of
$G$ containing 
$H$, then
\begin{equation}\label{eqn 2.4}
 \mathcal{F}(\Delta,\mathrm{Ind}_{H}^{\Delta}\rho)=\mathcal{F}(H,\rho),
\end{equation}
where $\mathrm{Ind}_{H}^{\Delta}\rho$ is the virtual representation of $\Delta$ induced from $\rho$. In general, 
let $\rho$ be a 
representation of $H$ with $\mathrm{dim}\,\rho\neq0$.
We can define a zero dimensional representation of $H$ by $\rho$ and which is:
  $\rho_0:=\rho-\mathrm{dim}\,\rho\cdot 1_H$. So $\mathrm{dim}\,\rho_0$ is zero, then now we use the equation (\ref{eqn 2.4}) for
$\rho_0$ and we have,
\begin{equation}\label{eqn 2.5}
 \mathcal{F}(\Delta,\mathrm{Ind}_{H}^{\Delta}\rho_0)=\mathcal{F}(H,\rho_0).
 \end{equation}
 Now replace $\rho_0$ by $\rho-\mathrm{dim}\rho\cdot 1_H$ in the above equation (\ref{eqn 2.5}) and we have
 \begin{align*}
   \mathcal{F}(\Delta,\mathrm{Ind}_{H}^{\Delta}(\rho-\mathrm{dim}\rho \cdot 1_H))
   &=\mathcal{F}(H,\rho-\mathrm{dim}\rho\cdot1_H)\\\implies
   \frac{\mathcal{F}(\Delta,\mathrm{Ind}_{H}^{\Delta}\rho)}
   {\mathcal{F}(\Delta,\mathrm{Ind}_{H}^{\Delta}1_H)^{\mathrm{dim}\rho}}
   &=\frac{\mathcal{F}(H,\rho)}
   {\mathcal{F}(H,1_H)^{\mathrm{dim}\rho}}.
 \end{align*}
Therefore,
\begin{align}
 \mathcal{F}(\Delta,\mathrm{Ind}_{H}^{\Delta}\rho)\nonumber
 &=\left\{\frac{\mathcal{F}(\Delta,\mathrm{Ind}_{H}^{\Delta}1_H)}{\mathcal{F}(H,1_H)}\right\}^{\mathrm{dim}\rho}\cdot\mathcal{F}(H,\rho)\\
 &=\lambda_{H}^{\Delta}(\mathcal{F})^{\mathrm{dim}\rho}\mathcal{F}(H,\rho), \label{eqn 2.6}
\end{align}
where
\begin{equation}\label{eqn 2.7}
 \lambda_{H}^{\Delta}(\mathcal{F}):=\frac{\mathcal{F}(\Delta,\mathrm{Ind}_{H}^{\Delta}1_H)}{\mathcal{F}(H,1_H)}.
\end{equation}
But by the definition of $\mathcal{F}$, we have 
$\mathcal{F}(H,1_H)=1$, so we can write 
\begin{equation}\label{eqn 2.8}
 \lambda_{H}^{\Delta}(\mathcal{F})={\mathcal{F}(\Delta,\mathrm{Ind}_{H}^{\Delta}1_H}).
\end{equation}
This $\lambda_{H}^{\Delta}(\mathcal{F})$
is called \textbf{Langlands $\lambda$-function} (or simply $\lambda$-function) which is independent of $\rho$.
A extendible function $\mathcal{F}$ is called \textbf{strongly} extendible if it satisfies
equation (\ref{eqn 2.3}) and fulfills equation (\ref{eqn 2.4}) for all $(H,\rho)\in R(G)$, and if the equation (\ref{eqn 2.4})
is fulfilled
only when $\mathrm{dim}\,\rho=0$, then
$\mathcal{F}$ is called \textbf{weakly} extendible function. The extendible functions are \textbf{unique}, if they exist 
(cf. \cite{JT1}, p. 103).

\begin{exm}
 Langlands proves the local constants are weakly extendible functions (cf. \cite{JT1}, p. 105, Theorem 1). The Artin root numbers
(also known as global constants) 
are strongly 
extendible functions (for more examples and details about extendible function, see \cite{JT1} and \cite{HK}).
\end{exm}


Now we take a tower of local Galois extensions $K/L/F$, and denote $G=\rm{Gal}(K/F)$, $H=\rm{Gal}(K/L)$. Then the $\lambda$-function
for the extension $L/F$ is:
$$\lambda_{\rm{Gal}(K/L)}^{\rm{Gal}(K/F)}(W):=\lambda_{L/F}(\psi)=W(\rm{Ind}_{L/F}(1_L),\psi),$$
where $1_L$ is the trivial character of $L^\times$ which corresponds to the trivial character of $H$ by class field theory,
and $\psi$ is a nontrivial additive character of $F$. And when we take $\psi=\psi_F$ as the canonical additive character, we simply write
$\lambda_{L/F}$ instead of $\lambda_{L/F}(\psi_F)$.

Since the Heisenberg representations of a finite local Galois are monomial (i.e., induced from linear character
of a finite-index subgroup), we need to know the explicit
formula for lambda functions for finite Galois extensions. For this article we need the following computations of 
lambda functions.
\begin{thm}[\cite{SAB1}, Theorem 3.5]\label{General Theorem for odd case} 
Let $F$ be a non-archimedean local field and $\mathrm{Gal}(E/F)$ be a local Galois group of odd order. If 
$L\supset K\supset F $ be any finite extension inside $E$, then $\lambda_{L/K}=1$. 
\end{thm}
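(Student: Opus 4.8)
The plan is to compute $\lambda_{L/K}$ directly from its definition as the local constant of a permutation representation, exploiting the oddness of $|\mathrm{Gal}(E/F)|$ to force the relevant determinant value at $-1$ to be trivial. First I would pass to the Galois picture: writing $H=\mathrm{Gal}(E/K)$ and $H'=\mathrm{Gal}(E/L)$, so that $H'\le H\le\mathrm{Gal}(E/F)$ all have odd order, the trivial character $1_L$ corresponds to $1_{H'}$ under class field theory, and hence $\lambda_{L/K}=W(\mathrm{Ind}_{H'}^{H}1_{H'},\psi_K)$, where $\theta:=\mathrm{Ind}_{H'}^{H}1_{H'}$ is inflated to a representation of $G_K$. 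The representation $\theta$ is the permutation representation of $H$ on the cosets $H/H'$; in particular its character is real-valued.

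Next I would decompose $\theta$ into irreducibles of $H$. The key input here is the classical theorem of Burnside that a finite group of odd order has no nontrivial real-valued irreducible character (equivalently, the only element conjugate to its inverse is the identity). Consequently every nontrivial irreducible constituent $\rho$ of $\theta$ satisfies $\rho\neq\overline{\rho}$, and since $\theta=\overline{\theta}$ the constituents $\rho$ and $\overline{\rho}$ occur with equal multiplicity. Thus I can write
$$\theta=m_0\,1_H\oplus\bigoplus_{\{\rho,\overline{\rho}\}}m_\rho\,(\rho\oplus\overline{\rho}),$$
the inner sum running over the unordered conjugate pairs of nontrivial irreducibles of $H$.

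Then I would invoke the weak extendibility of the local constant: additivity (equation (\ref{eqn 2.3})) gives
$$\lambda_{L/K}=W(1_H,\psi_K)^{m_0}\prod_{\{\rho,\overline{\rho}\}}\bigl(W(\rho,\psi_K)\,W(\overline{\rho},\psi_K)\bigr)^{m_\rho}.$$
The trivial factor is harmless: from (\ref{eqn 2.9}) with $a(1_H)=0$ one gets $W(1_H,\psi_K)=\psi_K(\pi_K^{-n(\psi_K)})=1$, since $\psi_K$ is trivial on $P_K^{-n(\psi_K)}$. For each pair I would use the standard functional-equation identity $W(\rho,\psi_K)\,W(\rho^\vee,\psi_K)=(\det\rho)(-1)$ together with $\overline{\rho}\cong\rho^\vee$. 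Finally, because $H$ has odd order, $\det\rho$ is a character of odd order, so $(\det\rho)(-1)$ is simultaneously a $\pm 1$ (as $(-1)^2=1$ in $K^\times$) and an odd-order root of unity, hence equal to $1$; every factor is therefore $1$ and $\lambda_{L/K}=1$.

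The main obstacle I anticipate is not the group theory but pinning down the precise sign: establishing cleanly that $W(\rho,\psi_K)\,W(\rho^\vee,\psi_K)=(\det\rho)(-1)$ in the normalization used here, so that the dependence on $\psi_K$ genuinely cancels, and confirming the identification $\overline{\rho}\cong\rho^\vee$ for these finite-image Galois representations. Once that identity is in hand, the oddness of the order does all the remaining work, and no appeal to Deligne's orthogonal Stiefel--Whitney formula or to a reduction into prime-degree steps is required.
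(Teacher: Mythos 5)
Your proof is correct. One structural remark first: this paper never proves the statement internally; it is imported verbatim from \cite{SAB1}, Theorem 3.5, so the only comparison available is with that cited argument. The proof there rests on the same cancellation mechanism you use, namely $W(\chi)W(\chi^{-1})=\chi(-1)=1$ for characters of odd order, but it is organized as a reduction: an odd-order (local Galois) group is solvable, so $L\supset K$ is refined into a tower of cyclic steps of odd prime degree, each step is settled by decomposing $\mathrm{Ind}\,1$ into linear characters paired with their inverses, and the pieces are reassembled through the transitivity of $\lambda$-functions, $\lambda_{M/K}(\psi)=\lambda_{M/L}(\psi_L)\cdot\lambda_{L/K}(\psi)^{[M:L]}$. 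Your route short-circuits the reduction entirely: Burnside's theorem (a group of odd order has no nontrivial real-valued irreducible character) lets you perform the conjugate-pairing at the level of the full permutation representation $\mathrm{Ind}_{L/K}1_L$ in one stroke, with the representation-level identity $W(\rho,\psi)W(\rho^{\vee},\psi)=\det\rho(-1)$ replacing its one-dimensional special case. What this buys: no solvability input, no tower bookkeeping, and a uniform treatment of arbitrary intermediate fields $K\subset L$ (in particular $L/K$ need not be Galois). What it costs is precisely the identity you flag as the main obstacle, but it is available in the normalization of this paper: for finite-image $\rho$ one has $\overline{\rho}\cong\rho^{\vee}$ and $|W(\rho,\psi)|=1$, and the identity follows from $\overline{W(\rho,\psi)}=W(\overline{\rho},\overline{\psi})$ combined with the unit-twist rule $W(\rho,a\psi)=\det\rho(a)\,W(\rho,\psi)$ at $a=-1$ (see \cite{JT2}, (3.4)). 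Your remaining checks are all sound: equal multiplicities of $\rho$ and $\overline{\rho}$ in the self-conjugate $\theta$, $W(1,\psi)=1$ from (\ref{eqn 2.9}), and the fact that $\det\rho(-1)$ is simultaneously a sign and an odd-order root of unity, hence $1$.
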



\begin{thm}[\cite{SAB1}, Theorem 5.9]\label{Theorem 3.21}
 Let $K$ be a tamely ramified quadratic extension of $F/\bbQ_p$ with $q_F=p^s$. Let $\psi_F$ be the canonical additive character of $F$.
 Let $c\in F^\times$ with $-1=\nu_F(c)+d_{F/\bbQ_p}$, and $c'=\frac{c}{\rm{Tr}_{F/F_0}(pc)}$, where $F_0/\bbQ_p$ is the maximal unramified
 extension in $F/\bbQ_p$. Let $\psi_{-1}$ be an additive character of $F$ with conductor $-1$, of the form $\psi_{-1}=c'\cdot\psi_F$.
 Then 
 \begin{equation*}
  \lambda_{K/F}(\psi_F)=\Delta_{K/F}(c')\cdot\lambda_{K/F}(\psi_{-1}),
 \end{equation*}
where 
 \begin{equation*}
 \lambda_{K/F}(\psi_{-1})=\begin{cases}
                                               (-1)^{s-1} & \text{if $p\equiv 1\pmod{4}$}\\
                                                  (-1)^{s-1}i^{s} & \text{if $p\equiv 3\pmod{4}$}.
                                            \end{cases}
\end{equation*}
If we take $c=\pi_{F}^{-1-d_{F/\bbQ_p}}$, where $\pi_F$ is a norm for $K/F$, then 
\begin{equation}
 \Delta_{K/F}(c')=\begin{cases}
                   1 & \text{if $\overline{\rm{Tr}_{F/F_0}(pc)}\in k_{F_0}^{\times}=k_{F}^{\times}$ is a square},\\
                   -1 & \text{if $\overline{\rm{Tr}_{F/F_0}(pc)}\in k_{F_0}^{\times}=k_{F}^{\times}$ is not a square}.
                  \end{cases}
\end{equation}
Here "overline" stands for modulo $P_{F_0}$.
\end{thm}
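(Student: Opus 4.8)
The plan is to reduce $\lambda_{K/F}$ to a single abelian local constant, use the behaviour of abelian constants under scaling of the additive character to produce the first identity, and then identify the remaining quantity as a normalized quadratic Gauss sum over $\mathbb{F}_{p^s}$ which the Davenport--Hasse relation evaluates. Since $[K:F]=2$, inducing the trivial character gives $\mathrm{Ind}_{K/F}(1_K)=1_F\oplus\Delta_{K/F}$, where $\Delta_{K/F}$ is the quadratic (discriminant) character of $F^\times$ cut out by $K/F$. By the multiplicativity (\ref{eqn 2.3}) of the weakly extendible function $W$, together with the elementary evaluation $W(1_F,\psi)=\psi(\pi_F^{-n(\psi)})=1$ for $\psi\in\{\psi_F,\psi_{-1}\}$ (both are trivial on $P_F^{-n(\psi)}$), one obtains
\begin{equation*}
\lambda_{K/F}(\psi)=W(\mathrm{Ind}_{K/F}(1_K),\psi)=W(\Delta_{K/F},\psi).
\end{equation*}

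First I would record the scaling law for abelian constants. Writing $\psi_a(x)=\psi(ax)$, Tate's formula (\ref{eqn 2.9}) shows that the parameter $c$ is multiplied by $\pi_F^{\nu_F(a)}$; writing $a=u\pi_F^{\nu_F(a)}$ with $u\in U_F$ and substituting $x\mapsto u^{-1}x$ in the sum absorbs the unit part, yielding $W(\chi,\psi_a)=\chi(a)\,W(\chi,\psi)$ with no extra absolute-value factor (this is the self-dual normalization already built into (\ref{eqn 2.9})). Applying this to $\chi=\Delta_{K/F}$, $\psi=\psi_F$ and $a=c'$, so that $\psi_{c'}=\psi_{-1}$, and using that $\Delta_{K/F}$ is quadratic (hence $\Delta_{K/F}(c')^{-1}=\Delta_{K/F}(c')$), gives the first displayed identity $\lambda_{K/F}(\psi_F)=\Delta_{K/F}(c')\cdot\lambda_{K/F}(\psi_{-1})$.

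Next I would evaluate $\lambda_{K/F}(\psi_{-1})=W(\Delta_{K/F},\psi_{-1})$. As $K/F$ is tamely ramified, $\Delta_{K/F}$ is a ramified quadratic character of conductor $1$ while $n(\psi_{-1})=-1$, so in (\ref{eqn 2.9}) we have $c=\pi_F^{a(\chi)+n(\psi_{-1})}=\pi_F^{0}=1$ and
\begin{equation*}
\lambda_{K/F}(\psi_{-1})=q_F^{-1/2}\sum_{x\in U_F/U_F^{1}}\Delta_{K/F}(x)\,\psi_{-1}(x).
\end{equation*}
Under $U_F/U_F^{1}\cong k_F^\times$ the character $\Delta_{K/F}$ restricts to the unique quadratic character $\eta$ of $k_F^\times=\mathbb{F}_{p^s}^\times$, and $\psi_{-1}$ descends to a nontrivial additive character $\overline{\psi}$ of $k_F$, so the sum is the normalized quadratic Gauss sum $q_F^{-1/2}\,G(\eta,\overline{\psi})$. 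The precise role of $c'$, built from $\mathrm{Tr}_{F/F_0}$ and the canonical $\psi_F=\psi_{\mathbb{Q}_p}\circ\mathrm{Tr}_{F/\mathbb{Q}_p}$, is to arrange that $\overline{\psi}$ is the trace lift $\psi_p\circ\mathrm{Tr}_{\mathbb{F}_{p^s}/\mathbb{F}_p}$ of the standard character $\psi_p(x)=e^{2\pi i x/p}$ of $\mathbb{F}_p$. The Davenport--Hasse relation then gives $G(\eta,\overline{\psi})=(-1)^{s-1}G(\eta_p,\psi_p)^{s}$, and inserting Gauss's classical values $G(\eta_p,\psi_p)=\sqrt{p}$ or $i\sqrt{p}$ according as $p\equiv 1$ or $3\pmod 4$ produces exactly $(-1)^{s-1}$ and $(-1)^{s-1}i^{s}$.

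Finally, for the normalization $c=\pi_F^{-1-d_{F/\mathbb{Q}_p}}$ with $\pi_F$ a norm from $K^\times$, we have $\Delta_{K/F}(\pi_F)=1$, hence $\Delta_{K/F}(c)=1$ and $\Delta_{K/F}(c')=\Delta_{K/F}(\mathrm{Tr}_{F/F_0}(pc))$. A valuation count using $\nu_F(p)=e_{F/\mathbb{Q}_p}$, the equality $n(\psi_F)=d_{F/\mathbb{Q}_p}=d_{F/F_0}$, and $\mathrm{Tr}_{F/F_0}(P_F^{m})=P_{F_0}^{\lfloor (m+d_{F/F_0})/e_{F/F_0}\rfloor}$ shows that $\mathrm{Tr}_{F/F_0}(pc)=p\,\mathrm{Tr}_{F/F_0}(c)$ is a unit of $O_{F_0}$; evaluating the tame quadratic character on this unit via its residual symbol $\eta$ gives $\Delta_{K/F}(c')=\eta(\overline{\mathrm{Tr}_{F/F_0}(pc)})$, i.e. $+1$ or $-1$ according as $\overline{\mathrm{Tr}_{F/F_0}(pc)}$ is or is not a square in $k_{F_0}^\times=k_F^\times$. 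I expect the genuine obstacle to be the third step: matching the residual character $\overline{\psi}$ with the trace lift $\psi_p\circ\mathrm{Tr}$ sharply enough to apply Davenport--Hasse and pin down both the sign $(-1)^{s-1}$ and the power $i^{s}$, and in particular controlling the square/non-square twist that is precisely what the factor $\Delta_{K/F}(c')$ records. The scaling law and the unit-verification for $\mathrm{Tr}_{F/F_0}(pc)$ are then routine.
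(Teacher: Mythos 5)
You should first note that this paper does not actually prove the statement: it is quoted as an external result (\cite{SAB1}, Theorem 5.9), so there is no internal proof to compare against, and your proposal has to be judged on its own. On that basis it is correct, and it is the natural argument one would expect the cited source to use: the reduction $\lambda_{K/F}(\psi)=W(\mathrm{Ind}_{K/F}(1_K),\psi)=W(1_F,\psi)\,W(\Delta_{K/F},\psi)=W(\Delta_{K/F},\psi)$, the scaling law $W(\chi,a\psi)=\chi(a)W(\chi,\psi)$ read off from formula (\ref{eqn 2.9}), the identification of $W(\Delta_{K/F},\psi_{-1})$ with a normalized quadratic Gauss sum over $k_F$ (here $a(\Delta_{K/F})=1$, $n(\psi_{-1})=-1$, so $c=\pi_F^0=1$ in (\ref{eqn 2.9})), and Davenport--Hasse plus Gauss's evaluation. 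The step you flag as the genuine obstacle does go through, and here is the mechanism: set $t:=\mathrm{Tr}_{F/F_0}(pc)$ and note that for $x\in O_F$ the class of $\mathrm{Tr}_{F/F_0}(cx)$ in $p^{-1}O_{F_0}/O_{F_0}\cong k_{F_0}$ equals $\overline{\mathrm{Tr}_{F/F_0}(pcx)}$, and the map $\bar{x}\mapsto\overline{\mathrm{Tr}_{F/F_0}(pcx)}$ is $k_{F_0}$-linear, hence is multiplication by $\bar{t}$; dividing by $t$ (this is exactly what $c'=c/t$ does) and applying $\mathrm{Tr}_{F_0/\bbQ_p}$, which commutes with reduction since $F_0/\bbQ_p$ is unramified, shows that $\psi_{-1}$ descends to precisely $\psi_p\circ\mathrm{Tr}_{k_F/\bbF_p}$, after which Davenport--Hasse gives both $(-1)^{s-1}$ and $i^s$.

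One small inaccuracy to repair: the ideal-level formula $\mathrm{Tr}_{F/F_0}(P_F^m)=P_{F_0}^{\lfloor(m+d_{F/F_0})/e_{F/F_0}\rfloor}$ alone only yields $\mathrm{Tr}_{F/F_0}(pc)\in O_{F_0}$, not that it is a unit (a "valuation count" cannot rule out extra cancellation in the trace). Unitness follows either from the hypothesis that $\psi_{-1}=c'\psi_F$ has conductor exactly $-1$ (which forces $\nu_F(c')=-1-d_{F/\bbQ_p}$, i.e.\ $\nu_{F_0}(t)=0$), or intrinsically from inverse-different duality: the additive map $O_F/P_F\to P_{F_0}^{-1}/O_{F_0}$, $\bar{x}\mapsto \mathrm{Tr}_{F/F_0}(cx)\bmod O_{F_0}$, has kernel $\{\,\bar{x}: cx\in \mathcal{D}_{F/F_0}^{-1}\,\}=0$ because $\nu_F(c)=-1-d_{F/F_0}$, so it is a bijection of sets of cardinality $q_F$, and in particular units have image of valuation exactly $-1$. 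With that patch, and with the convention (implicit in the statement, since $p\neq 2$ here) that the tame quadratic $K/F$ is ramified so that $\Delta_{K/F}|_{U_F}$ is the residual Legendre character, your argument is complete, including the final evaluation $\Delta_{K/F}(c')=\Delta_{K/F}(t)=\eta(\bar{t})$ using $\Delta_{K/F}(\pi_F)=1$ for a norm $\pi_F$.
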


\begin{thm}[\cite{SAB1}, Corollary 4.11(1)]\label{Theorem 2.5}
 Let $G=\rm{Gal}(E/F)$ be a finite local Galois group of a non-archimedean local field $F/\bbQ_p$ with $p\ne 2$. 
 Let $S\cong G/H$ be a nontrivial Sylow 2-subgroup of $G$, where $H$ is a uniquely determined Hall subgroup of odd order. Suppose that 
 we have a tower $E/K/F$
 of fields such that $S\cong \rm{Gal}(K/F)$, $H=\rm{Gal}(E/K)$ and $G=\rm{Gal}(E/F)$.
  If $S\subset G$ is cyclic, then 
  \begin{enumerate}
 \item
 \begin{equation*}
 \lambda_{1}^{G}=\lambda_{K/F}^{\pm 1}=\begin{cases}
                  \lambda_{K/F}=W(\alpha) & \text{if $[E:K]\equiv 1\pmod{4}$}\\
                  \lambda_{K/F}^{-1}=W(\alpha)^{-1} & \text{if $[E:K]\equiv -1\pmod{4}$},
                 \end{cases}
 \end{equation*}
 (here $\alpha=\Delta_{K/F}$ corresponds to the unique quadratic subextension in $K/F$) 
if $[K:F]=2$, hence $\alpha=\Delta_{K/F}$.
 \item 
 \begin{equation*}
\lambda_{1}^{G}=\beta(-1)W(\alpha)^{\pm 1}=\beta(-1)\times\begin{cases}
                  W(\alpha) & \text{if $[E:K]\equiv 1\pmod{4}$}\\
                  W(\alpha)^{-1} & \text{if $[E:K]\equiv -1\pmod{4}$}
                 \end{cases}
 \end{equation*}
if $K/F$ is cyclic of order $4$ with generating character $\beta$ such that 
 $\beta^2=\alpha=\Delta_{K/F}$.
 \item 
 \begin{equation*}
  \lambda_{1}^{G}=\lambda_{K/F}^{\pm 1}=\begin{cases}
                  \lambda_{K/F}=W(\alpha) & \text{if $[E:K]\equiv 1\pmod{4}$}\\
                  \lambda_{K/F}^{-1}=W(\alpha)^{-1} & \text{if $[E:K]\equiv -1\pmod{4}$}
                 \end{cases}
 \end{equation*}
 if $K/F$ is cyclic of order $2^n\ge 8$.
\end{enumerate}
  And if the $4$th roots of unity are in the $F$, we have the same formulas as above but with $1$ instead of $\pm 1$.
Moreover, when $p\ne 2$, a precise formula for $W(\alpha)$ will be obtained in Theorem \ref{Theorem 3.21}.

\end{thm}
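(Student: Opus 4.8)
The plan is to strip off the odd-order part of $G$ by a tower computation of the $\lambda$-function and then to evaluate the remaining $2$-power cyclic piece $\lambda_{K/F}$ by decomposing the induced representation into characters. First I would record the tower formula for a chain $F\subset K\subset E$:
\[
 \lambda_{E/F}(\psi_F)=\lambda_{K/F}(\psi_F)^{[E:K]}\cdot\lambda_{E/K}(\psi_K),\qquad \psi_K=\psi_F\circ\mathrm{Tr}_{K/F}.
\]
This comes from transitivity of induction, $\mathrm{Ind}_{E/F}1_E=\mathrm{Ind}_{K/F}\mathrm{Ind}_{E/K}1_E$, after writing $\mathrm{Ind}_{E/K}1_E$ as $[E:K]\cdot 1_K$ plus a virtual representation of dimension $0$ and applying the dimension-$0$ inductivity (\ref{eqn 2.4}) to the latter; the boundary terms drop out because $W(1_K,\psi_K)=1$, which is immediate from (\ref{eqn 2.9}) since the trivial character has conductor $0$. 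Crucially $\psi_F\circ\mathrm{Tr}_{K/F}$ is again the canonical character of $K$, so $\lambda_{E/K}(\psi_K)$ is computed with the canonical character.

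Since $H=\mathrm{Gal}(E/K)$ has odd order, Theorem \ref{General Theorem for odd case} applied with base field $K$ gives $\lambda_{E/K}(\psi_K)=1$, and the tower formula collapses to
\[
 \lambda_1^G=\lambda_{E/F}=\lambda_{K/F}^{[E:K]},\qquad [E:K]=|H|\ \text{odd}.
\]
It therefore remains to evaluate $\lambda_{K/F}=W(\mathrm{Ind}_{K/F}1_K,\psi_F)$ for the cyclic group $S=\mathrm{Gal}(K/F)$.

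Because $K/F$ is abelian, $\mathrm{Ind}_{K/F}1_K=\bigoplus_{\chi}\chi$ over the characters $\chi$ of $\mathrm{Gal}(K/F)$, so additivity of $W$ gives $\lambda_{K/F}=\prod_{\chi}W(\chi)$. I would use $W(1)=1$, let the unique order-$2$ character $\alpha=\Delta_{K/F}$ contribute $W(\alpha)$, and pair the remaining characters $\{\chi,\chi^{-1}\}$ via the functional equation $W(\chi)W(\chi^{-1})=\chi(-1)$. In case (1), $|S|=2$, there is nothing to pair and $\lambda_{K/F}=W(\alpha)$. In case (2), $|S|=4$ with generator $\beta$, the single pair $\{\beta,\beta^{-1}\}$ contributes the factor $\beta(-1)$, giving $\lambda_{K/F}=\beta(-1)W(\alpha)$. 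In case (3), $|S|=2^n\geq 8$, the product of pairing factors is $\prod_{j=1}^{2^{n-1}-1}\chi^j(-1)=\chi(-1)^{\binom{2^{n-1}}{2}}$, and $\binom{2^{n-1}}{2}=2^{n-2}(2^{n-1}-1)$ is even for $n\geq 3$, so this product is $1$ and $\lambda_{K/F}=W(\alpha)$. I expect this parity bookkeeping to be the main obstacle: it is precisely the point where cases (2) and (3) diverge, since the order-$4$ case retains the residual factor $\beta(-1)$ while the collapse to $1$ holds exactly from $n\geq 3$ onward.

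Finally I would raise to the odd power $[E:K]$. Since $W(\alpha)^2=\alpha(-1)=\pm 1$, the factor $W(\alpha)$ is a fourth root of unity, so $W(\alpha)^{[E:K]}=W(\alpha)$ or $W(\alpha)^{-1}$ according as $[E:K]\equiv 1$ or $-1\pmod 4$; likewise $\beta(-1)^{[E:K]}=\beta(-1)$ as $\beta(-1)=\pm 1$ and $[E:K]$ is odd. This produces the three displayed formulas. For the last assertion, if $\mu_4\subset F$ then $-1$ is a square in $F^\times$, hence $\alpha(-1)=1$, $W(\alpha)^2=1$, and $W(\alpha)^{-1}=W(\alpha)$, so the sign ambiguity vanishes and each formula holds with $1$ in place of $\pm 1$. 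The explicit value of $W(\alpha)$ when $p\neq 2$ is then furnished by Theorem \ref{Theorem 3.21}.
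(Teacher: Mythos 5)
Your proof is correct: the tower formula $\lambda_{E/F}=\lambda_{K/F}^{[E:K]}\lambda_{E/K}(\psi_K)$ with $\psi_K=\psi_F\circ\mathrm{Tr}_{K/F}$ canonical, the collapse of $\lambda_{E/K}$ via Theorem \ref{General Theorem for odd case}, the evaluation $\lambda_{K/F}=\prod_\chi W(\chi)$ with the pairing $W(\chi)W(\chi^{-1})=\chi(-1)$ (giving $W(\alpha)$, $\beta(-1)W(\alpha)$, and $W(\alpha)$ in the three cases), and the final exponentiation by the odd integer $[E:K]$ using $W(\alpha)^4=1$ all check out. Note that this paper does not actually prove the statement --- it is imported from \cite{SAB1}, Corollary 4.11(1) --- but your reconstruction follows the same route that the cited source and the surrounding text indicate (reduction to the Sylow-$2$ quotient by the odd-order theorem, then explicit character bookkeeping for the cyclic $2$-group), so there is nothing substantively different to compare.
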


\subsection{Classical Gauss sums}

Let $k_q$ be a finite field of order $q$. Let $\chi, \psi$ be a multiplicative and an additive character respectively of $k_q$. 
Then the Gauss sum $G(\chi,\psi)$ is 
defined
by 
\begin{equation}
 G(\chi,\psi)=\sum_{x\in k_{q}^{\times}}\chi(x)\psi(x).
\end{equation}
For this article we need the following theorem.
In general, we cannot give explicit formula of  $G(\chi,\psi)$ for arbitrary character $\chi$. But if $q=p^r$($r\ge 2$), where 
$p$ is an odd prime, then by R. Odoni  
(cf. \cite{BRK}, p. 33, Theorem 1.6.2) we can show that $G(\chi,\psi)/\sqrt{q}$ is a certain root of unity. If $q$ is an odd prime
and order of $\chi$ is $\ge 3$, then $G(\chi,\psi)/\sqrt{q}$ is {\bf not} a root of unity.
\begin{thm}[Chowla, \cite{BRK}, p. 31, Theorem 1.6.1]\label{Theorem Chowla}
 Let $q$ be an odd prime, and let $\chi$ be a character of $k_{q}^{\times}$ of order $>2$. Let 
 $\psi(x)=e^{\frac{2\pi i x}{q}}$ for $x\in k_q$. Then the Gauss sum $G(\chi,\psi)$
 does not equal to $\sqrt{q}$ times a root of unity,
\end{thm}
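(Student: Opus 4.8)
The plan is to argue by contradiction, using the Galois action on the cyclotomic field in which the Gauss sum lives. Identify $\psi(x)=\zeta_q^{x}$ with $\zeta_q=e^{2\pi i/q}$, and let $n>2$ be the order of $\chi$, so that $n\mid q-1$ and in particular $\gcd(n,q)=1$. Then $G:=G(\chi,\psi)$ lies in $\bbQ(\zeta_q,\zeta_n)=\bbQ(\zeta_{qn})$. Suppose, for contradiction, that $G=\sqrt{q}\cdot w$ for some root of unity $w$. Squaring removes the ambiguity of $\sqrt{q}$: I obtain $\eta:=G^2/q=w^2$, a root of unity lying in $\bbQ(\zeta_{qn})$.

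First I would record the effect of $\mathrm{Gal}(\bbQ(\zeta_{qn})/\bbQ)\cong(\bbZ/q)^\times\times(\bbZ/n)^\times$. For $t\in(\bbZ/q)^\times$ let $\sigma_t$ be the automorphism acting by $\zeta_q\mapsto\zeta_q^{t}$ and fixing $\zeta_n$ (such $\sigma_t$ exists because $\gcd(q,n)=1$); in particular $\sigma_t$ fixes every value $\chi(x)\in\mu_n$. The standard substitution $y=tx$ in the Gauss sum then gives
\[
 \sigma_t(G)=\sum_{x}\chi(x)\zeta_q^{tx}=\chi(t)^{-1}\sum_{y}\chi(y)\zeta_q^{y}=\bar\chi(t)\,G,
\]
and, since $q\in\bbQ$ is fixed, $\sigma_t(\eta)=\bar\chi(t)^2\,\eta$ for every $t\in\bbF_q^\times$.

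The core step is to exploit that $\eta$ is a root of unity. Because the $q$-adic valuation of $qn$ equals $1$, the roots of unity in $\bbQ(\zeta_{qn})$ have $q$-part exactly $\mu_q$, so I can write $\eta=\zeta_q^{a}\cdot\xi$ where $\xi$ is a root of unity of order prime to $q$; such $\xi$ lies in a cyclotomic field of conductor prime to $q$ and is therefore fixed by $\sigma_t$. Applying $\sigma_t$ gives $\zeta_q^{at}\xi=\sigma_t(\eta)=\bar\chi(t)^2\zeta_q^{a}\xi$, hence $\zeta_q^{a(t-1)}=\bar\chi(t)^2$ for all $t\in\bbF_q^\times$. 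The left-hand side is a $q$-th root of unity and the right-hand side is an $n$-th root of unity; as $\gcd(q,n)=1$, both sides must equal $1$. Thus $\bar\chi(t)^2=1$ for all $t$, i.e.\ $\chi^2=1$, contradicting $\mathrm{ord}(\chi)=n>2$.

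The only delicate point is the bookkeeping with roots of unity: I must check that $\sigma_t$ can be chosen to fix simultaneously all the $\chi$-values and the prime-to-$q$ factor $\xi$ of $\eta$, which rests on the linear disjointness of $\bbQ(\zeta_q)$ and the prime-to-$q$ cyclotomic fields over $\bbQ$ (equivalently, that the $q$-part of the group of roots of unity in $\bbQ(\zeta_{qn})$ is exactly $\mu_q$). Once this is in place the argument is purely formal, and it also explains the sharpness of the hypothesis: for $\chi$ of order $2$ one only recovers $\chi^2=1$, which is no contradiction, matching the fact that the quadratic Gauss sum is indeed $\sqrt{q}$ times a root of unity.
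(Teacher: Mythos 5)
The paper contains no proof of this statement: it is quoted as Chowla's theorem from \cite{BRK} (p.~31, Theorem 1.6.1) and used as a black box, so there is no internal argument to compare yours against; the right benchmark is the classical proof, and your proposal is a correct, essentially self-contained version of it. All the steps hold: since $n=\mathrm{ord}(\chi)$ divides $q-1$ we have $\gcd(q,n)=1$, so $\mathrm{Gal}(\bbQ(\zeta_{qn})/\bbQ)\cong(\bbZ/q)^\times\times(\bbZ/n)^\times$ and the automorphisms $\sigma_t$ exist as claimed; the substitution $y=tx$ indeed gives $\sigma_t(G)=\bar\chi(t)G$; and the delicate point you flag is handled correctly, because $q$ divides $qn$ exactly once, so the group of roots of unity of $\bbQ(\zeta_{qn})$ is $\mu_q\times\mu_{\mathrm{lcm}(2,n)}$, whose prime-to-$q$ factor is generated by $\mu_2$ and $\mu_n$ and is therefore fixed pointwise by $\sigma_t$. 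The identity $\zeta_q^{a(t-1)}=\bar\chi(t)^2$ then forces both sides to equal $1$ (as $\mu_q\cap\mu_n=\{1\}$ when $\gcd(q,n)=1$), giving $\chi^2\equiv 1$ and the contradiction. The one genuine difference from Chowla's argument as reproduced in \cite{BRK} is your squaring trick $\eta:=G^2/q$: Chowla works with $\epsilon:=G/\sqrt{q}$ itself and must therefore also track the action of $\sigma_t$ on $\sqrt{q}$ through the quadratic Gauss sum, namely $\sigma_t(\sqrt{q})=\chi_2(t)\sqrt{q}$ with $\chi_2$ the quadratic character of $k_q^\times$; the same root-of-unity decomposition then yields $\bar\chi(t)\chi_2(t)=1$ for all $t$, i.e.\ $\chi=\chi_2$, again contradicting $\mathrm{ord}(\chi)>2$. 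Squaring buys you freedom from this Legendre-symbol bookkeeping (and from having to adjoin $\sqrt{q}$, equivalently $i$, to the field) at no cost, and your closing remark correctly explains why the hypothesis $\mathrm{ord}(\chi)>2$ is sharp: for quadratic $\chi$ the argument only returns $\chi^2=1$, consistent with the quadratic Gauss sum actually being $\sqrt{q}$ or $i\sqrt{q}$.
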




\subsection{\textbf{Heisenberg representation}}

Let $\rho$ be an irreducible representation of a (pro-)finite group $G$. Then $\rho$ is called a \textbf{Heisenberg 
representation} if it represents commutators by 
scalar matrices. Therefore higher commutators are represented by $1$.
We can see that the linear characters of $G$ are Heisenberg representations as the degenerate special case.
To classify Heisenberg representations we need to mention two invariants of an irreducible representation 
$\rho\in\rm{Irr}(G)$:
\begin{enumerate}
 \item Let $Z_\rho$ be the \textbf{scalar} group of $\rho$, i.e., $Z_\rho\subseteq G$ and $\rho(z)=\text{scalar matrix}$
for every $z\in Z_\rho$. If $V/\bbC$ is a representation space of $\rho$ we get $Z_\rho$ as the kernel of the composite map 
\begin{equation}\label{eqn 2.6.1}
 G\xrightarrow{\rho}GL_{\bbC}(V)\xrightarrow{\pi} PGL_{\bbC}(V)=GL_{\bbC}(V)/\bbC^\times E,
\end{equation}
where $E$ is the unit matrix and denote $\overline{\rho}:=\pi\circ\rho$.
Therefore $Z_\rho$ is a normal subgroup of $G$.
\item Let $\chi_\rho$ be the character of $Z_\rho$ which is given as $\rho(g)=\chi_\rho(g)\cdot E$ for all $g\in Z_\rho$. 
Apparently $\chi_\rho$ is a $G$-invariant character of $Z_\rho$ which we call the central 
character of $\rho$.
\end{enumerate}
Let $A$ be a profinite abelian group. Then we know that (cf. \cite{Z5}, p. 124, Theorem 1 and Theorem 2)
the set of isomorphism classes $\rm{PI}(A)$ of projective irreducible representations (for 
projective representation, see \cite{CR}, \S  51) of $A$ is in bijective correspondence with the 
set of continuous alternating characters $\rm{Alt}(A)$. If $\rho\in\rm{PI}(A)$ corresponds to $X\in\rm{Alt}(A)$ then 
\begin{center}
 $\rm{Ker}(\rho)=\rm{Rad}(X)$ \hspace{.4cm} and \hspace{.2cm}$[A:\rm{Rad}(X)]=\rm{dim}(\rho)^2$,
\end{center}
where $\rm{Rad}(X):=\{a\in A|\, X(a,b)=1,\,\text{for all}\, b\in A\}$, the {\bf radical of $X$}.

Let $A:=G/[G,G]$, so $A$ is abelian. 
We also know from the  composite map (\ref{eqn 2.6.1})
$\overline{\rho}$ is a projective irreducible representation of $G$ and $Z_\rho$ is the kernel of $\overline{\rho}$.
Therefore \textbf{modulo commutator group $[G,G]$}, we can consider that 
$\overline{\rho}$ is in $\rm{PI}(A)$ which corresponds an alternating 
character $X$ of $A$ with kernel of $\overline{\rho}$ is $Z_\rho/[G,G]=\rm{Rad}(X)$.
We also know that 
$$[A:\rm{Rad}(X)]=[G/[G,G]:Z_\rho/[G,G]]=[G:Z_\rho].$$
Then we observe that 
$$\rm{dim}(\overline{\rho})=\rm{dim}(\rho)=\sqrt{[G:Z_\rho]}.$$

Let $H$ be a subgroup of $A$, then we define the orthogonal complement of $H$ in $A$ with respect to $X$
$$H^\perp:=\{a\in A:\quad X(a, H)\equiv1\}.$$
An {\bf isotropic} subgroup $H\subset A$ is a subgroup such that $H\subseteq H^\perp$ (cf. \cite{EWZ}, p. 270, Lemma 1(v)).
And when isotropic subgroup $H$ is maximal,
we call $H$ is a \textbf{maximal isotropic} for $X$. Thus when $H$ is maximal isotropic we have 
$H=H^\perp$.

We also can show that the Heisenberg representations $\rho$ are fully characterized by the corresponding pair 
$(Z_{\rho},\chi_{\rho})$.

\begin{prop}[\textbf{\cite{Z3}, Proposition 4.2}]\label{Proposition 3.1}
The map $\rho\mapsto(Z_\rho,\chi_\rho)$ is a bijection between equivalence 
classes of Heisenberg representations of $G$ and the pairs $(Z_\rho,\chi_\rho)$ such that 
\begin{enumerate}
 \item[(a)] $Z_\rho\subseteq G$ is a coabelian normal subgroup,
 \item[(b)] $\chi_\rho$ is a $G$-invariant character of $Z_\rho$,
 \item[(c)] $X(\hat{g_1},\hat{g_2}):=\chi_\rho(g_1g_2g_1^{-1}g_2^{-1})$ is a nondegenerate 
 \textbf{alternating character} on $G/Z_\rho$ where $\hat{g_1},\hat{g_2}\in G/Z_{\rho}$ and their 
 corresponding lifts $g_1,g_2\in G$.
\end{enumerate}
\end{prop}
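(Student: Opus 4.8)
The plan is to show that $\rho\mapsto(Z_\rho,\chi_\rho)$ maps into the set of pairs satisfying (a)--(c) and is a bijection, treating the three clauses in the forward direction first, then injectivity, then surjectivity.

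\emph{Forward direction.} I would first verify that $(Z_\rho,\chi_\rho)$ satisfies (a)--(c). Normality of $Z_\rho$ is immediate, since it is the kernel of $\overline{\rho}=\pi\circ\rho$. Because $\rho$ represents commutators by scalar matrices, every commutator lies in $Z_\rho$, so $[G,G]\subseteq Z_\rho$ and $Z_\rho$ is coabelian, giving (a). For (b), normality of $Z_\rho$ together with $\rho(gzg^{-1})=\rho(g)\chi_\rho(z)E\rho(g)^{-1}=\chi_\rho(z)E$ shows $\chi_\rho(gzg^{-1})=\chi_\rho(z)$, i.e. $\chi_\rho$ is $G$-invariant. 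For (c), I would define $X$ through $\rho([g_1,g_2])=X(\hat{g}_1,\hat{g}_2)E$ and use the commutator identity $[g_1g_1',g_2]={}^{g_1}[g_1',g_2]\cdot[g_1,g_2]$, all of whose factors lie in $Z_\rho$, together with the $G$-invariance of $\chi_\rho$, to conclude $X(\widehat{g_1g_1'},\hat{g}_2)=X(\hat{g}_1,\hat{g}_2)X(\hat{g}_1',\hat{g}_2)$; the relation $X(\hat{g},\hat{g})=\chi_\rho(1)=1$ gives the alternating property, and well-definedness on $G/Z_\rho$ follows because $\rho(z)$ is scalar, hence central in $\rho(G)$, for $z\in Z_\rho$. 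Nondegeneracy is then exactly Schur's lemma: if $\hat{g}$ lies in $\mathrm{Rad}(X)$ then $\rho(g)$ commutes with every $\rho(g')$, so $\rho(g)$ is scalar by irreducibility and $g\in Z_\rho$.

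\emph{Injectivity.} Suppose two Heisenberg representations share the pair $(Z,\chi)$. Each restricts to $Z$ as $\dim(\rho)\cdot\chi$, so both lie over the $G$-invariant character $\chi$. By Clifford theory the irreducible representations of $G$ over $\chi$ are controlled by the twisted group algebra of $G/Z$ attached to $\chi$, whose associated alternating form is precisely $X$; nondegeneracy of $X$ forces this algebra to be a matrix algebra with a single irreducible module, which is the Stone--von Neumann uniqueness. Hence the isomorphism class of $\rho$ is determined by $(Z,\chi)$.

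\emph{Surjectivity and main obstacle.} Given $(Z,\chi)$ satisfying (a)--(c), I would construct $\rho$ by choosing $Z\subseteq M\subseteq G$ with $M/Z$ maximal isotropic for $X$, so that nondegeneracy yields $[G:M]=\sqrt{[G:Z]}$. Since $M/Z$ is isotropic, $\chi([m,m'])=1$ for $m,m'\in M$, whence $[M,M]\subseteq\ker\chi$ and $\chi$ extends to a linear character $\tilde{\chi}$ of $M$ (the quotient $M/\ker\chi$ is abelian and $\bbC^\times$ is injective). Setting $\rho:=\mathrm{Ind}_M^G\tilde{\chi}$, one checks by Mackey's criterion that $\rho$ is irreducible of dimension $\sqrt{[G:Z]}$, and then that $Z_\rho=Z$, $\chi_\rho=\chi$, and that $\rho$ represents commutators by scalars, so $\rho\mapsto(Z,\chi)$. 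This surjectivity step is the crux of the argument: the genuinely delicate points are producing a maximal isotropic subgroup of the correct index and running the Mackey irreducibility verification, which is exactly where the nondegeneracy of $X$ and the maximality of $M/Z$ are used, followed by the identification of the scalar group and central character of the induced representation. The profinite setting adds only the requirement that everything factor through a finite quotient of $G$ on which $\rho$ is defined, which can be handled by continuity and reduces the whole argument to the finite case governed by the bijection $\mathrm{PI}(A)\leftrightarrow\mathrm{Alt}(A)$ cited above.
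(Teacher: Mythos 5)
Your proposal is correct, but there is no in-paper proof to compare it with: the paper states Proposition \ref{Proposition 3.1} as a quoted result of Zink (\cite{Z3}, Proposition 4.2) and gives no argument, recording only the induction identity (\ref{eqn 322}) and, in Remark \ref{Remark 3.2}, the fact that the extensions $\chi_H$ of $\chi_\rho$ to a maximal isotropic subgroup are pairwise distinct and form a single orbit under $G/H$. Your argument is the standard proof of this result, and it meshes exactly with those fragments: your Mackey step $\tilde{\chi}^g\tilde{\chi}^{-1}(m)=\chi([g,m])=X(\hat{g},\hat{m})$ is literally the computation of Remark \ref{Remark 3.2}; the Mackey restriction formula $\mathrm{Res}_Z\,\mathrm{Ind}_M^G\tilde{\chi}=[G:M]\cdot\chi$ (valid because $M\supseteq [G,G]$ is normal and $\chi$ is $G$-invariant by (b)) is what makes $\rho(z)$ scalar for $z\in Z$, hence $Z\subseteq Z_\rho$ with equality by nondegeneracy; and inducing in stages through $M$ then gives (\ref{eqn 322}). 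If you write this up, make three things explicit. First, the normality of $M$ in $G$, which is what reduces Mackey's irreducibility criterion to the stabilizer computation you perform. Second, the fact that a maximal isotropic subgroup $H$ for a nondegenerate alternating character on a finite abelian group $A$ satisfies $H=H^\perp$ and $|A|=|H|^2$; this is where $[G:M]=\sqrt{[G:Z]}$ and hence the dimension count come from, and the paper also uses it without proof. Third, in the profinite setting the pairs must have $Z$ open, i.e. $[G:Z]<\infty$ (as is built into the paper's definition of $\mathrm{Alt}$), after which continuity reduces everything to a finite quotient as you say. Your uniqueness step via simplicity of the twisted group algebra $\bbC^{\alpha}[G/Z]$ (Stone--von Neumann) is a clean alternative to deducing uniqueness from (\ref{eqn 322}), i.e. from the fact that $\mathrm{Ind}_Z^G\chi$ is $\rho$-isotypic; either route is fine.
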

For pairs $(Z_\rho,\chi_\rho)$ with the properties $(a)-(c)$, the corresponding Heisenberg representation $\rho$ is determined 
by the identity (cf. \cite{SAB2}, p. 30):
\begin{equation}\label{eqn 322}
 \sqrt{[G:Z_\rho]}\cdot\rho=\mathrm{Ind}_{Z_\rho}^{G}\chi_\rho.
\end{equation}


Let
$C^1G=G$, $C^{i+1}G=[C^iG,G]$ denote the 
descending central series of $G$. Now assume that every projective representation of $A$ lifts to an ordinary representation 
of $G$. Then by I. Schur's results (cf. \cite{CR}, p. 361, Theorem 53.7) we have (cf. \cite{Z5}, p. 124, Theorem 2):
\begin{enumerate}
 \item Let $A\wedge_\bbZ A$ denote the alternating square of the $\bbZ$-module $A$. The commutator map 
 \begin{equation}\label{eqn 2.6.3}
  A\wedge_\bbZ A\cong C^2G/C^3G, \hspace{.3cm} a\wedge b\mapsto [\hat{a},\hat{b}]
 \end{equation}
is an isomorphism.
\item The map $\rho\to X_\rho\in\rm{Alt}(A)$ from Heisenberg representations to alternating characters on $A$ is 
surjective. 
\end{enumerate}

\begin{rem}\label{Remark 3.2}
 Let $\chi_\rho$ be a character of $Z_\rho$. All extensions $\chi_H\supset\chi_\rho$ are conjugate with respect  
to $G/H$. This can be easily seen, since we know $\chi_H\supset\chi_\rho$ and $\chi_{H}^{g}(h)=\chi_{H}(ghg^{-1})$. If we take 
$z\in Z_\rho$,
then we obtain
\begin{center}
 $\chi_{H}^{g}(z)=\chi_{H}(gzg^{-1})=\chi_{\rho}(gzg^{-1})=\chi_{\rho}(gzg^{-1}z^{-1}z)$\\
 $=\chi_\rho([g,z]z)=X(g,z)\cdot\chi_\rho(z)=\chi_\rho(z)$,
\end{center}
since $Z_\rho$ is a normal subgroup of $G$ and the radical of $X$ 
(i.e., $X(g,z)=\chi_\rho([g,z])=1$ for all $z\in Z_\rho$ and 
$g\in G$).
Therefore, $\chi_{H}^{g}$ are extensions of $\chi_\rho$ for all $g\in G/H$. It can also be seen that the conjugates $\chi_{H}^{g}$ are 
all different, because $\chi_{H}^{g_1}=\chi_{H}^{g_2}$ is the same as $\chi_{H}^{g_1g_{2}^{-1}}=\chi_H$.
So it is enough to see that $\chi_{H}^{g-1}\not\equiv 1$ if $g\neq1\in G/H$. But
\begin{center}
 $\chi_{H}^{g-1}(h)=\chi_\rho(ghg^{-1}h^{-1})=X(g,h)$,
\end{center}
and therefore $\chi_{H}^{g-1}\equiv 1$ on $H$ implies $g\in H^{\bot}=H$, where $``\bot"$ denotes the 
orthogonal complement with respect to $X$. Then for a given one extension $\chi_H$ of $\chi_\rho$
all other extensions are of the form $\chi_{H}^{g}$ for $g\in G/H$.

\end{rem}
\begin{rem}\label{Remark 2.10}
 Let $\rho=(Z,\chi_\rho)$ be a Heisenberg representation of $G$. Then from the definition of Heisenberg representation 
we have 
$$[[G,G], G]\subseteq \rm{Ker}(\rho).$$
Now let $\overline{G}:=G/\rm{Ker}(\rho)$. Then we obtain
$$[\overline{G},\overline{G}]=[G/\rm{Ker}(\rho),G/\rm{Ker}(\rho)]=[G,G]\cdot\rm{Ker}(\rho)/\rm{Ker}(\rho)=[G,G]/[G,G]\cap\rm{Ker}(\rho).$$
Since $[[G,G],G]\subseteq\rm{Ker}(\rho)$, then  $[x,g]\in\rm{Ker}(\rho)$ for all $x\in [G,G]$ and $g\in G$.
Hence we obtain
\begin{center}
 $[[\overline{G},\overline{G}],\overline{G}]=[[G,G]/[G,G]\cap \rm{Ker}(\rho), G/\rm{Ker}(\rho)]\subseteq\rm{Ker}(\rho)$,
\end{center}
This shows that $\overline{G}$ is a two-step nilpotent group.
\end{rem}

\section{\textbf{Arithmetic description of Heisenberg representations}}

In Section 2.5, we see the notion of Heisenberg representations of a (pro-)finite group.
These Heisenberg representations have arithmetic structure due to E.-W. Zink (cf. \cite{Z2}, \cite{Z4}, \cite{Z5}).
For this article we need to describe the arithmetic structure of Heisenberg representations.

Let $F/\bbQ_p$ be a local field, and $\overline{F}$ be an algebraic closure of $F$. Denote $G_F=\rm{Gal}(\overline{F}/F)$ the 
absolute Galois group for $\overline{F}/F$. We know that (cf. \cite{HK2}, p. 197) each representation $\rho:G_F\to GL(n,\bbC)$ corresponds 
to a projective 
representation $\overline{\rho}:G_F\to GL(n,\bbC)\to PGL(n,\bbC)$. On the other hand, each projective representation 
$\overline{\rho}:G_F\to PGL(n,\bbC)$ can be lifted to a representation $\rho:G_F\to GL(n,\bbC)$.
Let $A_F=G_{F}^{ab}$ be the factor commutator group of $G_F$. Define 
\begin{center}
 $FF^\times:=\varprojlim(F^\times/N\wedge F^\times/N)$
\end{center}
where $N$ runs over all open subgroups of finite index in $F^\times$. Denote by $\rm{Alt}(F^\times)$ as the set of 
all alternating characters $X:F^\times\times F^\times\to\bbC^\times$ such that $[F^\times:\rm{Rad}(X)]<\infty$. Then the local 
reciprocity map gives an isomorphism between $A_F$ and the profinite completion of $F^\times$, and induces a natural bijection 
\begin{equation}
 \rm{PI}(A_F)\xrightarrow{\sim}\rm{Alt}(F^\times),
\end{equation}
where $\rm{PI}(A_F)$ is the set of isomorphism classes of projective irreducible representations of $A_F$.
By using class field theory from the commutator map (\ref{eqn 2.6.3}) (cf. p. 125 of \cite{Z5}) we obtain 
\begin{equation}\label{eqn 5.1.2}
 c:FF^\times\cong [G_F,G_F]/[[G_F,G_F], G_F].
\end{equation}
 
Let $K/F$ be an abelian extension corresponding to the norm subgroup $N\subset F^\times$ and if $W_{K/F}$ denotes the relative Weil 
group, the commutator map for $W_{K/F}$ induces an isomorphism (cf. p. 128 of \cite{Z5}):
\begin{equation}\label{eqn 5.1.3}
 c: F^\times/N\wedge F^\times/N\to K_{F}^{\times}/I_{F}K^\times,
\end{equation}
where 
\begin{center}
 $K_{F}^{\times}:=\{x\in K^\times|\quad N_{K/F}(x)=1\}$, i.e., the norm-1-subgroup of $K^\times$,\\
 $I_FK^\times:=\{x^{1-\sigma}|\quad x\in K^{\times}, \sigma\in \rm{Gal}(K/F)\}<K_{F}^{\times}$, the augmentation with respect to $K/F$. 
\end{center}
Taking the projective limit over all abelian extensions $K/F$ the isomorphisms (\ref{eqn 5.1.3}) induce:
\begin{equation}\label{eqn 5.1.4}
 c:FF^\times\cong \varprojlim K_{F}^{\times}/I_FK^\times,
\end{equation}
where the limit on the right side refers to norm maps. This gives an arithmetic description of Heisenberg representations of the 
group $G_F$.

\begin{thm}[Zink, \cite{Z2}, p. 301, Corollary 1.2]\label{Theorem 5.1.1}
 The set of Heisenberg representations $\rho$ of $G_F$ is in bijective correspondence with the set of all pairs $(X_\rho,\chi_\rho)$
 such that:
 \begin{enumerate}
  \item $X_\rho$ is a character of $FF^\times$,
  \item $\chi_\rho$ is a character of $K^{\times}/I_FK^\times$, where the abelian extension $K/F$ corresponds to the radical 
  $N\subset F^\times$ of $X_\rho$, and 
  \item via (\ref{eqn 5.1.3}) the alternating character $X_\rho$ corresponds to the restriction of $\chi_\rho$ to $K_{F}^{\times}$.
 \end{enumerate}

\end{thm}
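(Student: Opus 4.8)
The plan is to transport the group-theoretic classification of Heisenberg representations in Proposition \ref{Proposition 3.1} to the arithmetic side via local class field theory, translating the pair $(Z_\rho,\chi_\rho)$ and the conditions (a)--(c) into the data $(X_\rho,\chi_\rho)$ and the conditions (1)--(3). First I would handle the scalar group. By (a), $Z_\rho$ is coabelian normal in $G_F$, so $Z_\rho\supseteq[G_F,G_F]$ and $G_F/Z_\rho$ is abelian; being the kernel of the continuous map $\overline{\rho}$, it is closed, hence $Z_\rho=G_K$ for the abelian extension $K=\overline{F}^{Z_\rho}$. The reciprocity isomorphism $A_F\cong\widehat{F^\times}$ then identifies $Z_\rho/[G_F,G_F]$ with an open finite-index norm subgroup $N\subset F^\times$ and $G_F/Z_\rho$ with $\mathrm{Gal}(K/F)\cong F^\times/N$.

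Next I would produce the alternating character. Through the composite map $\overline{\rho}=\pi\circ\rho$, the representation $\rho$ determines a projective representation of $A_F$, equivalently an alternating character $X_\rho$ on $A_F$ whose radical is precisely $Z_\rho/[G_F,G_F]$. Under reciprocity this is an element of $\mathrm{Alt}(F^\times)$ with radical $N$, and by the bijection $\mathrm{PI}(A_F)\xrightarrow{\sim}\mathrm{Alt}(F^\times)$ together with $FF^\times=\varprojlim(F^\times/N\wedge F^\times/N)$ it is the same datum as a character $X_\rho$ of $FF^\times$. This is (1), and its radical $N$ pins down the field $K$ of (2).

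Then I would translate the central character. By (b), $\chi_\rho$ is a $G_F$-invariant character of $Z_\rho=G_K$, so by class field theory for $K$ it becomes a character of (the profinite completion of) $K^\times$; functoriality of reciprocity matches the conjugation action of $G_F$ on $G_K^{ab}$ with the natural $\mathrm{Gal}(K/F)$-action on $K^\times$. Hence $G_F$-invariance is equivalent to $\chi_\rho$ being trivial on the augmentation $I_FK^\times=\{x^{1-\sigma}\}$, i.e.\ to $\chi_\rho$ factoring through $K^\times/I_FK^\times$, which is (2). Finally, the nondegenerate-alternating condition (c), namely that $X(\hat g_1,\hat g_2)=\chi_\rho([g_1,g_2])$ on $G_F/Z_\rho$, must be matched with (3): via the commutator isomorphism $c:F^\times/N\wedge F^\times/N\to K_F^\times/I_FK^\times$ of (\ref{eqn 5.1.3}), the value of $X_\rho$ on $a\wedge b$ is $\chi_\rho$ evaluated at the image $c(a\wedge b)\in K_F^\times/I_FK^\times$, which says exactly that $X_\rho$ corresponds under (\ref{eqn 5.1.3}) to the restriction $\chi_\rho|_{K_F^\times}$ (note $I_FK^\times\subset K_F^\times$, so this restriction factors through $K_F^\times/I_FK^\times$). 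Nondegeneracy is automatic since $N=\mathrm{Rad}(X_\rho)$.

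Having carried out these four translations, I would conclude by noting that each step uses only the bijective maps $A_F\cong\widehat{F^\times}$, $\mathrm{PI}(A_F)\xrightarrow{\sim}\mathrm{Alt}(F^\times)$, and the commutator isomorphisms (\ref{eqn 5.1.2})--(\ref{eqn 5.1.4}); hence the bijection of Proposition \ref{Proposition 3.1} transports term by term to the asserted bijection, with the inverse direction reconstructing $\rho$ through the identity (\ref{eqn 322}). The step I expect to be the main obstacle is the compatibility in (3): verifying that the group-theoretic commutator pairing on $G_F/Z_\rho$ is carried, under reciprocity, exactly onto the arithmetic commutator map (\ref{eqn 5.1.3}) into $K_F^\times/I_FK^\times$, and that this matches $\chi_\rho|_{K_F^\times}$. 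This is the core of Zink's relative-Weil-group commutator computation and demands care with the norm-one subgroup $K_F^\times$ and the augmentation $I_FK^\times$; by contrast the profinite and continuity bookkeeping (finite index of $N$, continuity of the characters) is routine.
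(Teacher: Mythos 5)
The paper offers no proof of Theorem \ref{Theorem 5.1.1} to compare against: it is imported verbatim from Zink (\cite{Z2}, Corollary 1.2), and the surrounding text only records its consequences, namely the induction identity (\ref{eqn 5.1.5}) and the dimension formula (\ref{eqn dimension formula}). So your proposal has to be judged on its own, as a reconstruction of Zink's argument.

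Your translation scheme is the natural one, and it is the one the paper's own exposition is set up for: Proposition \ref{Proposition 3.1} classifies Heisenberg representations by pairs $(Z_\rho,\chi_\rho)$, and the maps $\mathrm{PI}(A_F)\xrightarrow{\sim}\mathrm{Alt}(F^\times)$ and (\ref{eqn 5.1.2})--(\ref{eqn 5.1.4}) are introduced precisely to convert $(Z_\rho,\chi_\rho)$ into $(X_\rho,\chi_K)$. Your handling of conditions (1) and (2) is correct: $Z_\rho=G_K$ with $K/F$ abelian of finite degree, the radical of $X_\rho$ is the norm group $N=\mathcal{N}_{K/F}$, and $G_F$-invariance of $\chi_\rho$ is equivalent (by functoriality of reciprocity) to triviality on the augmentation $I_FK^\times$, i.e.\ to $\chi_\rho$ descending to $K^\times/I_FK^\times$. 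The gap is exactly where you place it, but it is more serious than your closing paragraph suggests: the equivalence of condition (c) of Proposition \ref{Proposition 3.1} with condition (3) requires that the Galois-theoretic pairing $X(\hat g_1,\hat g_2)=\chi_\rho([g_1,g_2])$ be carried by the reciprocity maps onto the Weil-group commutator isomorphism $c$ of (\ref{eqn 5.1.3}) --- in particular, that the image in $G_K^{\mathrm{ab}}$ of a commutator of lifts is a norm-one class modulo $I_FK^\times$, that the resulting map on $F^\times/N\wedge F^\times/N$ is well defined, and that it is bijective onto $K_F^\times/I_FK^\times$. That compatibility \emph{is} the mathematical content of Zink's Corollary 1.2 (resting on \cite{Z5}); once it is granted, the rest of the correspondence, in both directions, is bookkeeping of the kind you carry out. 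Deferring it means your proposal proves only the routine shell around the theorem, not the theorem itself --- which is defensible here only because the paper likewise treats both (\ref{eqn 5.1.3}) and the theorem as imported facts rather than proving them.
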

 Given a pair $(X,\chi)$, we can construct the Heisenberg representation $\rho$ by induction from $G_K:=\rm{Gal}(\overline{F}/K)$ to 
 $G_F$:
 \begin{equation}\label{eqn 5.1.5}
 \sqrt{[F^\times:N]}\cdot\rho=\rm{Ind}_{K/F}(\chi),
 \end{equation}
where $N$ and $K$ are as in (2) of the above Theorem \ref{Theorem 5.1.1}
and where the induction of $\chi$ (to be considered as a character of $G_K$ by class
field theory) produces a multiple of $\rho$. From 
$[F^\times:N]=[K:F]$ we obtain the {\bf dimension formula:}
\begin{equation}\label{eqn dimension formula}
 \rm{dim}(\rho)=\sqrt{[F^\times:N]},
\end{equation}
where $N$ is the radical of $X$.

Let $K/E$ be an extension of $E$, and $\chi_K:K^\times\to\bbC^\times$ be a character of $K^\times$. In the following lemma, we 
give the conditions of the existence of characters $\chi_E\in\widehat{E^\times}$ such that $\chi_E\circ N_{K/E}=\chi_K$, 
and the solutions set 
of this $\chi_E$. 

\begin{lem}\label{Lemma 5.1.4}
Let $K/E$ be a finite extension of a field $E$, and $\chi_K: K^\times\to\bbC^\times$.  
 \begin{enumerate}
  \item[(i)] The existence of characters $\chi_E: E^\times\to\bbC^\times$ such that $\chi_E\circ N_{K/E}=\chi_K$
  is equivalent to $K_{E}^{\times}\subset\rm{Ker}(\chi_K)$.
  \item[(ii)] In case (i) is fulfilled, we have a well defined character 
  \begin{equation}
   \chi_{K/E}:=\chi_K\circ N_{K/E}^{-1}:\mathcal{N}_{K/E}\to \bbC^\times,
  \end{equation}
on the subgroup of norms $\mathcal{N}_{K/E}:=N_{K/E}(K^\times)\subset E^\times$, and the solutions $\chi_E$ such that 
$\chi_E\circ N_{K/E}=\chi_K$ are precisely the extensions of $\chi_{K/E}$ from $\mathcal{N}_{K/E}$ to a character of 
$E^\times$.
 \end{enumerate}
\end{lem}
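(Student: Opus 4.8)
The plan is to analyze everything through the norm homomorphism $N_{K/E}\colon K^\times\to E^\times$, whose kernel is by definition the norm-one subgroup $K_{E}^{\times}$ and whose image is the norm subgroup $\mathcal{N}_{K/E}$. The first isomorphism theorem then supplies a canonical isomorphism $K^\times/K_{E}^{\times}\xrightarrow{\sim}\mathcal{N}_{K/E}$, and the whole statement reduces to pushing characters across this isomorphism and then extending to the ambient group.

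For the forward direction of (i), I would argue directly: if some $\chi_E$ satisfies $\chi_E\circ N_{K/E}=\chi_K$, then for any $x\in K_{E}^{\times}$ one has $\chi_K(x)=\chi_E(N_{K/E}(x))=\chi_E(1)=1$, so $K_{E}^{\times}\subseteq\mathrm{Ker}(\chi_K)$. For the converse, assuming $K_{E}^{\times}\subseteq\mathrm{Ker}(\chi_K)$, the character $\chi_K$ kills the kernel of $N_{K/E}$ and hence factors through $K^\times/K_{E}^{\times}\cong\mathcal{N}_{K/E}$; this produces a well-defined character $\chi_{K/E}$ on $\mathcal{N}_{K/E}$ with $\chi_{K/E}\circ N_{K/E}=\chi_K$. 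Concretely, for $a=N_{K/E}(x)$ I set $\chi_{K/E}(a)=\chi_K(x)$, and independence of the chosen preimage $x$ is exactly the containment $K_{E}^{\times}\subseteq\mathrm{Ker}(\chi_K)$: two preimages differ by an element of $K_{E}^{\times}$, on which $\chi_K$ is trivial.

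The one genuine input, and the step I expect to be the crux, is passing from a character of the subgroup $\mathcal{N}_{K/E}\subseteq E^\times$ to a character of all of $E^\times$. Here I would invoke the fact that $\bbC^\times$ is a divisible abelian group, hence an injective $\bbZ$-module, so that every homomorphism from a subgroup into $\bbC^\times$ extends to the ambient group. This yields at least one $\chi_E\colon E^\times\to\bbC^\times$ restricting to $\chi_{K/E}$ on $\mathcal{N}_{K/E}$, and for such a $\chi_E$ one has $\chi_E(N_{K/E}(x))=\chi_{K/E}(N_{K/E}(x))=\chi_K(x)$ for all $x\in K^\times$, completing the converse of (i).

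Part (ii) then requires almost no further work. The well-definedness of $\chi_{K/E}:=\chi_K\circ N_{K/E}^{-1}$ on $\mathcal{N}_{K/E}$ is precisely the preimage-independence already established. To identify the solution set, I would note that any $\chi_E$ solving $\chi_E\circ N_{K/E}=\chi_K$ must restrict on $\mathcal{N}_{K/E}$ to $\chi_{K/E}$, since $\chi_E(N_{K/E}(x))=\chi_K(x)=\chi_{K/E}(N_{K/E}(x))$; conversely, every extension of $\chi_{K/E}$ to $E^\times$ is a solution by the computation in the previous paragraph. Hence the solutions are exactly the extensions of $\chi_{K/E}$, as claimed. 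I do not anticipate any subtlety beyond remembering to cite divisibility of $\bbC^\times$ for both the existence and the parametrization of these extensions.
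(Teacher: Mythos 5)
Your proposal is correct and follows essentially the same route as the paper: the forward direction of (i) is the identical computation on norm-one elements, the converse factors $\chi_K$ through $K^\times/K_{E}^{\times}\cong\mathcal{N}_{K/E}$ to obtain the well-defined character $\chi_{K/E}$, and part (ii) is the same restriction/extension dichotomy. The only divergence is the justification of the extension step: the paper extends $\chi_{K/E}$ from $\mathcal{N}_{K/E}$ to $E^\times$ by citing that this subgroup has finite index (invoking class field theory, which tacitly assumes $E$ is local), whereas you invoke divisibility, i.e.\ injectivity, of $\mathbb{C}^\times$ as a $\mathbb{Z}$-module, which is a cleaner justification valid for an arbitrary subgroup of an abelian group and thus matches the generality in which the lemma is actually stated.
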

\begin{proof}
{\bf (i)}
Suppose that an equation $\chi_K=\chi_E\circ N_{K/E}$ holds.
Let $x\in K_{E}^{\times}$, hence $N_{K/E}(x)=1$. Then 
$$\chi_K(x)=\chi_E\circ N_{K/E}(x)=\chi_E(1)=1.$$
So $x\in\rm{Ker}(\chi_K)$, and hence $K_{E}^{\times}\subset \rm{Ker}(\chi_K)$.

Conversely assume that $K_{E}^{\times}\subset\rm{Ker}(\chi_K)$. 
 Then $\chi_K$ is actually a character of $K^\times/K_{E}^{\times}$. Again we have
 $K^\times/K_{E}^{\times}\cong \mathcal{N}_{K/E}\subset E^\times$, 
 hence $\widehat{K^\times/K_{E}^{\times}}\cong \widehat{\mathcal{N}_{K/E}}$.
 Now suppose that $\chi_K$ corresponds to the character $\chi_{K/E}$ of $\mathcal{N}_{K/E}$. Hence 
 we can write $\chi_K\circ N_{K/F}^{-1}=\chi_{K/E}$. Thus 
 the character $\chi_{K/E}:\mathcal{N}_{K/E}\to\bbC^\times$
 is well defined. Since $E^\times$ is an abelian group and $\mathcal{N}_{K/E}\subset E^\times$ is a subgroup of finite index
 (by class field theory) $[K:E]$,
 we can extend $\chi_{K/E}$ to $E^\times$, and $\chi_K$ is of the form $\chi_K=\chi_E\circ N_{K/E}$ 
 with $\chi_E|_{\cN_{K/E}}=\chi_{K/E}$.\\
 {\bf (ii)}
 If condition (i) is satisfied, then this part is obvious. 
 If $\chi_E$ is a solution of $\chi_K=\chi_E\circ N_{K/E}$, 
 with $\chi_{K/E}:=\chi_K\circ N_{K/E}^{-1}:\mathcal{N}_{K/E}\to\bbC^\times$, then certainly $\chi_E$ is an extension of 
 the character $\chi_{K/E}$. 
 
 Conversely, if $\chi_E$ extends $\chi_{K/E}$, then it is a solution of $\chi_K=\chi_E\circ N_{K/E}$ with 
 $\chi_K\circ N_{K/E}^{-1}=\chi_{K/E}:\mathcal{N}_{K/E}\to\bbC^\times$.
\end{proof}

\begin{rem}
Now take Heisenberg representation $\rho=\rho(X,\chi_K)$ of $G_F$. Let $E/F$ be any extension corresponding to a maximal 
isotropic for $X$. In this Heisenberg setting, from Theorem \ref{Theorem 5.1.1}(2), we know $\chi_K$ is a character of 
$K^\times/I_FK^\times$, and from the first commutative diagram on p. 302 of \cite{Z2} we have 
$N_{K/E}:K_F^\times/I_FK^\times\to E_F^\times/I_F\cN_{K/E}$. Thus in the Heisenberg setting,
 we have more information than Lemma \ref{Lemma 5.1.4}(i), that $\chi_K$ is a character of 
 \begin{equation}
  K^\times/K_{E}^{\times}I_FK^\times\xrightarrow{N_{K/E}}\mathcal{N}_{K/E}/I_F\mathcal{N}_{K/E}\subset E^\times/I_F\mathcal{N}_{K/E},
 \end{equation}
and therefore $\chi_{K/F}$ is actually a character of $\mathcal{N}_{K/E}/I_F\mathcal{N}_{K/E}$, or in other words, it is a 
$\rm{Gal}(E/F)$-invariant character of the $\rm{Gal}(E/F)$-module $\mathcal{N}_{K/E}\subset E^\times$. And if $\chi_E$ is one of 
the solution of Lemma \ref{Lemma 5.1.4}(ii), then the complete solutions is the set $\{\chi_E^\sigma\,|\,\sigma\in \rm{Gal}(E/F)\}$.

{\bf 
We know that $W(\chi_E,\psi\circ\rm{Tr}_{K/E})$ has the same value for all solutions $\chi_E$ of $\chi_E\circ N_{K/E}=\chi_K$,
which means for all $\chi_E$ which extend the character $\chi_{K/E}$}.

Moreover, from the above Lemma \ref{Lemma 5.1.4}, we also can see that $\chi_E|_{\mathcal{N}_{K/E}}=\chi_{K}\circ N_{K/E}^{-1}$.

\end{rem}

Let $\rho=\rho(X,\chi_K)$ be a Heisenberg representation of $G_F$. Let $E/F$ be any extension corresponding to a maximal 
isotropic for $X$. Then by using the above Lemma \ref{Lemma 5.1.4}, we have the following lemma. 

\begin{lem}
  Let $\rho=\rho(Z,\chi_\rho)=\rho(\rm{Gal}(L/K),\chi_K)$ be a Heisenberg representation of a finite local Galois group 
  $G=\rm{Gal}(L/F)$, where $F$ is a non-archimedean local field. Let $H=\rm{Gal}(L/E)$ be a maximal isotropic for 
  $\rho$. Then we obtain
  \begin{equation}
   \rho=\rm{Ind}_{E/F}(\chi_{E}^{\sigma})\quad\text{for all $\sigma\in\rm{Gal}(E/F)$},
  \end{equation}
 where $\chi_E:E^\times/I_F\cN_{K/E}\to\bbC^\times$ with $\chi_K=\chi_E\circ N_{K/E}$.\\
 Moreover, for a fixed base field $E$ of a maximal isotropic for $\rho$, this construction of 
 $\rho$ is independent of the choice of this character $\chi_E$.
 \end{lem}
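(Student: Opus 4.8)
The plan is to reduce the arithmetic statement to the group-theoretic fact that a Heisenberg representation is induced from any character of a maximal isotropic subgroup extending its central character, and then to translate the result back to fields via local class field theory. First I would record the numerology. Since $\rho$ represents commutators by scalars we have $[G,G]\subseteq Z$, so $A=G/Z$ is abelian and carries the nondegenerate alternating form $X$ of Proposition \ref{Proposition 3.1}; because $H=\mathrm{Gal}(L/E)$ is maximal isotropic it contains $Z$ (so $F\subseteq E\subseteq K\subseteq L$) and satisfies $H=H^{\perp}$, whence $[G:H]=[H:Z]=\sqrt{[G:Z]}=\dim\rho$.

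The first substantive step is to extend the central character $\chi_\rho$ to $H$. Isotropy gives $X(\hat{h}_1,\hat{h}_2)=\chi_\rho([h_1,h_2])=1$ for all $h_1,h_2\in H$, i.e. $\chi_\rho$ is trivial on $[H,H]\subseteq Z$; hence $\chi_\rho$ factors through $Z[H,H]/[H,H]\subseteq H^{\mathrm{ab}}$, and as $\bbC^\times$ is divisible this character of a subgroup extends to a character $\chi_H$ of $H^{\mathrm{ab}}$, that is, a character $\chi_H$ of $H$ with $\chi_H|_Z=\chi_\rho$. By Remark \ref{Remark 3.2} the extensions of $\chi_\rho$ to $H$ form a single $G/H$-orbit $\{\chi_H^{g}\}$ of size $[G:H]=\dim\rho$; since $H/Z$ is abelian this orbit coincides with the family $\{\chi_H\psi : \psi\in\widehat{H/Z}\}$ (both sets are contained in one another and have the same finite cardinality $\dim\rho$).

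Next I would prove $\rho=\mathrm{Ind}_H^G\chi_H$ by induction in stages from the defining identity \eqref{eqn 322}. Using $\mathrm{Ind}_Z^H\chi_\rho=\chi_H\otimes\mathrm{Ind}_Z^H\mathbf 1=\bigoplus_{\psi}\chi_H\psi$ and inducing up,
\begin{equation*}
\sqrt{[G:Z]}\cdot\rho=\mathrm{Ind}_Z^G\chi_\rho=\mathrm{Ind}_H^G\Bigl(\bigoplus_{\psi}\chi_H\psi\Bigr)=\bigoplus_{\psi}\mathrm{Ind}_H^G(\chi_H\psi).
\end{equation*}
By the previous step each $\chi_H\psi$ is a $G/H$-conjugate of $\chi_H$, and induction is invariant under conjugation, so every summand equals $\mathrm{Ind}_H^G\chi_H$; there are $[H:Z]=\sqrt{[G:Z]}$ of them, and comparing characters forces $\rho=\mathrm{Ind}_H^G\chi_H$.

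Finally I would translate via class field theory. The linear character $\chi_H$ of $H=\mathrm{Gal}(L/E)$ corresponds to a character $\chi_E$ of $E^\times$; the inclusion $Z\hookrightarrow H$ dualizes to the norm $N_{K/E}\colon K^\times\to E^\times$, so $\chi_H|_Z=\chi_\rho$ becomes $\chi_E\circ N_{K/E}=\chi_K$. The existence of such $\chi_E$ is Lemma \ref{Lemma 5.1.4}(i), whose hypothesis $K_E^\times\subseteq\mathrm{Ker}(\chi_K)$ is precisely the refinement recorded in the Remark after that lemma, which also pins the domain down to $E^\times/I_F\cN_{K/E}$. Since $\mathrm{Ind}_H^G$ becomes $\mathrm{Ind}_{E/F}$ and the conjugates $\chi_H^{\sigma}$ become $\chi_E^{\sigma}$, this yields $\rho=\mathrm{Ind}_{E/F}(\chi_E^{\sigma})$ for every $\sigma\in\mathrm{Gal}(E/F)$; the \emph{moreover} clause is then immediate, since the admissible $\chi_E$ constitute one $\mathrm{Gal}(E/F)$-orbit (again by that Remark) and conjugate characters induce isomorphic representations. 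I expect the main obstacle to be not the representation theory, which is the standard Stone--von Neumann mechanism, but keeping the class field theory dictionary consistent: verifying that restriction to $Z$ really corresponds to composition with $N_{K/E}$ and that the resulting $\chi_E$ carries the asserted domain $E^\times/I_F\cN_{K/E}$.
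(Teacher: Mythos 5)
Your proposal is correct, and it follows the same two-step skeleton that the paper itself announces --- first the group-theoretic fact $\rho=\mathrm{Ind}_H^G(\chi_H)$ for an extension $\chi_H\supset\chi_\rho$, then the translation through class field theory --- but the two proofs invest their effort in opposite halves, and the key existence step is justified by genuinely different mechanisms. The paper takes the group-theoretic statement for granted (citing the construction (\ref{eqn 322}) and Remark \ref{Remark 3.2}) and concentrates on the arithmetic side: it proves that isotropy of $H$ forces $\chi_K$ to be trivial on $K_E^\times$, via the commutative diagram whose vertical arrows are the Weil-group commutator isomorphisms of (\ref{eqn 5.1.3}), and only then applies Lemma \ref{Lemma 5.1.4} to manufacture $\chi_E$, finally checking $\chi_E^{\sigma}\circ N_{K/E}=\chi_K$ by hand. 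You do the opposite: you prove the group-theoretic half in detail (extension of $\chi_\rho$ through $[H,H]$-triviality and divisibility of $\bbC^\times$, induction in stages, orbit counting via Remark \ref{Remark 3.2}), and you obtain the existence of $\chi_E$ with $\chi_E\circ N_{K/E}=\chi_K$ as a corollary of the compatibility of the Artin maps with norm and restriction, citing the Remark following Lemma \ref{Lemma 5.1.4} only for the precise domain $E^\times/I_F\cN_{K/E}$ and for the orbit structure of the solutions. Both routes are sound. What the paper's route buys is that the hypothesis $K_E^\times\subseteq\mathrm{Ker}(\chi_K)$ of Lemma \ref{Lemma 5.1.4} is verified by a purely arithmetic argument --- the diagram is exactly where ``$H$ isotropic'' enters on the field side --- with no appeal to functoriality of the reciprocity map beyond what the diagram encodes. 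What your route buys is a self-contained representation-theoretic core (the Stone--von Neumann mechanism is proved rather than cited) and the observation that the hypothesis of Lemma \ref{Lemma 5.1.4}(i) never needs separate verification: once $\chi_H$ exists, $\chi_K=\chi_E\circ N_{K/E}$ holds by functoriality, and triviality on $K_E^\times$ follows for free. The one step you should write out rather than gesture at is that norm--restriction compatibility itself (commutativity of $\theta_K$, $\theta_E$, $N_{K/E}$ with the map induced on abelianizations by $\mathrm{Gal}(L/K)\hookrightarrow\mathrm{Gal}(L/E)$), since that is precisely the content which the paper's commutative diagram supplies and which your argument otherwise leaves implicit.
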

\begin{proof}

From the group theoretical construction of Heisenberg representation (cf. see Section 2.6), we can write 
\begin{equation}
 \rho=\rm{Ind}_{H}^{G}(\chi_{H}^{g}), \quad\text{for all $g\in G/H$},
\end{equation}
where $\chi_H:H\to\bbC^\times$ is an extension of $\chi_\rho$. From Remark \ref{Remark 3.2} we know that all extensions of 
character $\chi_\rho$ are conjugate with respect to $G/H$, and they are different. If we fix $H$, then $\rho$ is independent
of the choice of character $\chi_H$. For every extension of $\chi_\rho$ we will have same $\rho$. The assertion of the lemma
is the arithmetic expression of this group theoretical facts, and which we will prove in the following.

By the given conditions,
 $L/F$ is a finite Galois extension of the local field $F$ and $G=\rm{Gal}(L/F)$, and 
 $H=\mathrm{Gal}(L/E)$, $Z=\mathrm{Gal}(L/K)$ and $\{1\}=\mathrm{Gal}(L/L)$.
Then by class
field theory, equation (\ref{eqn 5.1.3}), and the condition $X:=\chi_K\circ [-,-]$, 
$\chi_\rho$ identifies with a character 
\begin{center}
 $\chi_K: K^\times/I_FK^\times\to\mathbb{C}^\times$.
\end{center}
Moreover, for the Heisenberg representations we also have the following commutative diagram 

\begin{equation}
\begin{CD}
K^\times_E/I_EK^\times                    @>inclusion>>                         K^\times_F/I_FK^\times\\
@AAcA                                                                  @AAcA \\
E^\times/\cN_{K/E} \wedge E^\times/\cN_{K/E}  @>{N_{E/F}\wedge N_{E/F}}>>  F^\times/\cN_{K/F}\wedge  F^\times/\cN_{K/F}
\end{CD}
\end{equation}
where $N_{E/F}\wedge N_{E/F}(a\wedge b)=N_{E/F}(a)\wedge N_{E/F}(b)$ for all $a,b\in E^\times$, and
the vertical isomorphisms in upward direction are given as the
commutator maps (cf. equation (\ref{eqn 5.1.3})) in the Weil groups $W_{K/E}/I_EK^\times$ and
$W_{K/F}/I_FK^\times$ respectively. 
Under the right vertical $\chi_K$ corresponds (cf. Theorem \ref{Theorem 5.1.1}(3)) to the alternating
character $X$ which is trivial on $N_{E/F}\wedge N_{E/F},$ because $H$
corresponding to $E^\times$ is isotropic.
The commutative diagram  now shows that $\chi_K$ must be trivial on the
image of the upper horizontal, i.e., $\chi_K$ is trivial on the subgroups $K_{E}^{\times}$ for all maximal isotropic $E$. 
Hence $\chi_K$ is actually a character of $K^\times/K_{E}^{\times}$. 

Then from Lemma \ref{Lemma 5.1.4} we can say that there exists a character $\chi_E:E^\times/I_F\cN_{K/E}\to \bbC^\times$ such that 
$\chi_K=\chi_E\circ N_{K/E}$. And this $\chi_E$ is determined by the character $\chi_H$.
For $\sigma\in G/H=\mathrm{Gal}(E/F)$ we have $\chi_{E}^{\sigma}\circ N_{K/E}=\chi_E\circ N_{K/E}=\chi_K$ because 
$\chi_{E}^{\sigma-1}\circ N_{K/E}\equiv 1$, because $\chi_E$ is trivial on $I_F\mathcal{N}_{K/E}$.

Therefore instead of $\rho=\mathrm{Ind}_{H}^{G}(\chi_{H}^{g})$ for all $g\in G/H$, we obtain
\begin{center}
 $\rho=\rm{Ind}_{E/F}(\chi_{E}^{\sigma})$, for all $\sigma\in\rm{Gal}(E/F)$,
\end{center}
independently of the choice of $\chi_E$.

\end{proof}

\begin{rem}
Moreover we have the exact sequence
\begin{align}\label{sequence 5.1.2}
 K^\times/I_FK^\times\xrightarrow{N_{K/E}} E^\times/I_F\mathcal{N}_{K/E}\xrightarrow{N_{E/F}} F^\times/\mathcal{N}_{K/F},
\end{align}
which is only exact in the middle term. For the dual groups this gives
\begin{align}\label{sequence 5.1.3}
 \widehat{K^\times/I_FK^\times}\xleftarrow{N_{K/E}^{*}} \widehat{E^\times/I_F\mathcal{N}_{K/E}}
 \xleftarrow{N_{E/F}^{*}} \widehat{F^\times/\mathcal{N}_{K/F}}.
\end{align}
But $N_{K/E}^{*}(\chi_{E}^{\sigma-1})=\chi_{E}^{\sigma-1}\circ N_{K/E}\equiv 1$, and therefore the exactness of sequence 
(\ref{sequence 5.1.3}) yields
\begin{equation}
\chi_{E}^{\sigma-1}=\chi_F\circ N_{E/F}, \quad\text{ for some $\chi_F\in\widehat{F^\times/\mathcal{N}_{K/F}}$},
\end{equation}
\end{rem}

For our (arithmetic) determinant computation of Heisenberg representation $\rho$ of $G_F$, we need the following lemma regarding 
transfer map.

\begin{lem}\label{Lemma transfer Heisenberg}
 Let $\rho=\rho(Z,\chi_\rho)$ be a Heisenberg representation of a group $G$ and assume that $H/Z\subset G/Z$ is a maximal
 isotropic for $\rho$. Then transfer map $T_{(G/Z)/(H/Z)}\equiv1$ is the trivial map.
\end{lem}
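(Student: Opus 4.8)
The plan is to reduce the statement to an elementary computation of the transfer for the abelian group $A:=G/Z$. First I would record that, since $\rho$ represents all commutators by scalars, we have $[G,G]\subseteq Z$; hence both $A=G/Z$ and $B:=H/Z$ are abelian, so that $A^{ab}=A$, $B^{ab}=B$, and the transfer in question is simply a homomorphism $T_{A/B}\colon A\to B$. The core observation is the classical formula for the transfer of an abelian group into a subgroup of finite index: choosing a transversal $t_1,\dots,t_n$ of $B$ in $A$ (with $n=[A:B]$) and writing $a t_i = t_{\pi(i)}b_i$, one gets $b_i = a\,t_{\pi(i)}^{-1}t_i$ because $A$ is abelian, and since $\pi$ is a permutation the product of the $t_{\pi(i)}^{-1}t_i$ telescopes to $1$. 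Thus $T_{A/B}(a)=a^{\,n}$, and in particular the claim $T_{A/B}\equiv 1$ is equivalent to $a^{[A:B]}=1$ for every $a\in A$, i.e. to the divisibility of $[A:B]$ by the exponent of $A$.

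Next I would bring in the symplectic structure. By Proposition \ref{Proposition 3.1}(c) the commutator pairing $X(\hat g_1,\hat g_2)=\chi_\rho([g_1,g_2])$ is a nondegenerate alternating character on $A=G/Z$, and by hypothesis $B=H/Z$ is maximal isotropic, so $B=B^\perp$. Nondegeneracy gives $|B^\perp|=|A|/|B|$, whence $[A:B]=|B|=\sqrt{[G:Z]}=\dim\rho$. It remains to prove that the exponent of $A$ divides $\sqrt{|A|}$. For this I would invoke the classification of nondegenerate alternating pairings on a finite abelian group (existence of a symplectic basis): $A$ decomposes as an orthogonal sum $\bigoplus_i(\langle x_i\rangle\oplus\langle y_i\rangle)$ of hyperbolic planes, where $x_i,y_i$ have equal order $n_i$ and $X(x_i,y_i)$ is a primitive $n_i$-th root of unity. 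Then $\sqrt{|A|}=\prod_i n_i$ while the exponent of $A$ equals $\operatorname{lcm}_i n_i$, which always divides $\prod_i n_i$; hence the exponent of $A$ divides $\sqrt{|A|}=[A:B]$.

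Combining the two steps, $a^{[A:B]}=1$ for all $a\in A$, so the transfer formula from the first paragraph yields $T_{(G/Z)/(H/Z)}\equiv 1$. The one genuinely structural input, and the step I expect to carry the weight of the argument, is the symplectic-basis classification used to force the exponent of $A$ to divide $\sqrt{|A|}$; the transfer computation and the index count $[A:B]=\sqrt{|A|}$ are then routine. Alternatively one could cite directly that the transfer of a symplectic abelian group onto a Lagrangian subgroup is trivial, but I would prefer to exhibit the short self-contained argument above.
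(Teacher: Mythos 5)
Your proof is correct, and its skeleton is the same as the paper's: both arguments reduce the statement to the fact that the transfer of the abelian group $A=G/Z$ into the finite-index subgroup $B=H/Z$ is the power map $a\mapsto a^{[A:B]}$, identify $[A:B]$ with $\dim\rho=\sqrt{[G:Z]}$, and then kill $a^{[A:B]}$ by an exponent bound. The difference is in how the two inputs are justified. The paper cites Theorem 5.6 of \cite{MI} for the transfer-equals-power-map statement and, for the decisive step, cites Lemma 3.3 of \cite{SAB3} for the fact that $G^{d}\subseteq Z$ with $d=\dim\rho$ (equivalently, that the exponent of $G/Z$ divides $d$); you instead prove both facts from scratch. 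Your telescoping-transversal computation of the abelian transfer is the standard proof of the cited result, so that part is only cosmetically different. The genuinely different contribution is your treatment of the exponent bound: you invoke the classification of nondegenerate alternating pairings on finite abelian groups, writing $A$ as an orthogonal sum of hyperbolic planes $\langle x_i\rangle\oplus\langle y_i\rangle$ with $x_i,y_i$ of common order $n_i$, so that the exponent $\mathrm{lcm}_i\, n_i$ divides $\prod_i n_i=\sqrt{|A|}=[A:B]$; your index count $[A:B]=|B|=\sqrt{|A|}$, via $B=B^{\perp}$ and nondegeneracy, is the same count the paper uses implicitly when it writes $[G/Z:H/Z]=d$. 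What each approach buys: the paper's proof is shorter but leans on an external lemma of the author's other preprint, while yours is self-contained within standard structure theory (Wall's symplectic-basis theorem) and makes transparent exactly why the dimension annihilates $G/Z$, namely that an lcm of elementary divisors always divides their product.
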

\begin{proof}
 In general, if $H$ is a central subgroup\footnote{A subgroup of a group which lies inside the center of the group, i.e., 
 a subgroup $H$ of $G$ is central if $H\subseteq Z(G)$.} of finite index $n=[G:H]$ of a group $G$, then by Theorem 5.6 on p. 154 of \cite{MI} we have 
 $T_{G/H}(g)=g^n$. If $G$ is abelian, then center $Z(G)=G$. Hence every subgroup of $G$ is central subgroup. Now if we take $G$ as 
 an abelian group and $H$ is a subgroup of finite index, then we can write $T_{G/H}(g)=g^{[G:H]}$.
 
 Now we come to the Heisenberg setting. We know that $G/Z$ is abelian, hence $H/Z\subset G/Z$ is a central subgroup.
 Then we have $T_{(G/Z)/(H/Z)}(g)=g^{[G/Z:H/Z]}=g^d$, where $d$ is the dimension of $\rho$.
 For the Heisenberg setting, we also know (cf. Lemma 3.3 on p. 8 of \cite{SAB3}) that $G^d\subseteq Z$, hence $g^d\in Z$. This implies 
 $$T_{(G/Z)/(H/Z)}(g)=g^d=1,\quad\text{the identity in $H/Z$},$$
 for all $g\in G$, hence $T_{(G/Z)/(H/Z)}\equiv1$ is a trivial map.
\end{proof}

By using the above Lemma \ref{Lemma 5.1.4} and 
Lemma \ref{Lemma transfer Heisenberg}, in the following, we give the 
arithmetic description of the determinant of Heisenberg representations.

\begin{prop}\label{Proposition arithmetic form of determinant}
 Let $\rho=\rho(Z,\chi_\rho)=\rho(G_K,\chi_K)$ be a Heisenberg representation of the absolute Galois group $G_F$.
 Let $E$ be a base field of a maximal isotropic for $\rho$. Then $F^\times\subseteq \cN_{K/E}$, and
 \begin{equation}\label{eqn 5.1.12}
  \det(\rho)(x)=\Delta_{E/F}(x)\cdot\chi_K\circ N_{K/E}^{-1}(x)\quad \text{for all $x\in F^\times$},
 \end{equation}
where, for all $x\in F^\times$,
\begin{equation}\label{eqn 5.1.13}
 \Delta_{E/F}(x)=\begin{cases}
                  1 & \text{when $\rm{rk}_2(\rm{Gal}(E/F))\ne 1$}\\
                  \omega_{E'/F}(x) & \text{when $\rm{rk}_2(\rm{Gal}(E/F))= 1$},
                 \end{cases}
\end{equation}
where $E'/F$ is a uniquely determined quadratic subextension in $E/F$, and $\omega_{E'/F}$ is the character of $F^\times$ which 
corresponds to $E'/F$ by class field theory.
\end{prop}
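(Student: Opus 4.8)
The plan is to compute $\det(\rho)$ directly from the monomial expression $\rho=\mathrm{Ind}_{E/F}(\chi_E)$ furnished by the preceding lemma (valid whenever $E$ is a base field of a maximal isotropic, with $\chi_K=\chi_E\circ N_{K/E}$), and then to read off both factors arithmetically through local class field theory. The tool is the standard determinant formula for an induced representation: writing $G_F=\mathrm{Gal}(\overline F/F)$, $G_E=\mathrm{Gal}(\overline F/E)$, for a one--dimensional $\chi_E$ of $G_E$ one has
\[
 \det\big(\mathrm{Ind}_{E/F}\chi_E\big)=\det\big(\mathrm{Ind}_{E/F}\mathbf{1}\big)\cdot\big(\chi_E\circ T_{G_F/G_E}\big),
\]
where $T_{G_F/G_E}\colon G_F^{ab}\to G_E^{ab}$ is the transfer. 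By the inclusion$\,\leftrightarrow\,$transfer functoriality of local class field theory, $T_{G_F/G_E}$ corresponds to the inclusion $F^\times\hookrightarrow E^\times$, so as a character of $F^\times$ the second factor is simply $x\mapsto\chi_E(x)$ for $x\in F^\times\subseteq E^\times$; the first factor is a quadratic character which I will match with $\Delta_{E/F}$.

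Before reading off the second factor I must check $F^\times\subseteq\cN_{K/E}$, which also makes the right--hand side of (\ref{eqn 5.1.12}) meaningful. Here the structure is favourable: since $Z=\mathrm{Gal}(L/K)$ is normal in $G=\mathrm{Gal}(L/F)$ with $G/Z$ abelian (it carries the nondegenerate form $X$), the tower $F\subseteq E\subseteq K$ consists of abelian extensions, with $\mathrm{Gal}(K/F)=G/Z$, $\mathrm{Gal}(K/E)=H/Z$, and $E/F$ Galois. The composite $F^\times\hookrightarrow E^\times\twoheadrightarrow E^\times/\cN_{K/E}\cong\mathrm{Gal}(K/E)=H/Z$ equals, by the same inclusion$\,\leftrightarrow\,$transfer correspondence together with the compatibility of the transfer with the common quotient by $\mathrm{Gal}(\overline F/K)$, the finite transfer $T_{(G/Z)/(H/Z)}$ applied to $\mathrm{rec}_F(x)\bmod Z$. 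By Lemma \ref{Lemma transfer Heisenberg} this transfer is trivial, so $F^\times\subseteq\cN_{K/E}$. Consequently, for $x\in F^\times$ the second factor $\chi_E(x)$ lies in the range where $\chi_E|_{\cN_{K/E}}=\chi_K\circ N_{K/E}^{-1}$ (Lemma \ref{Lemma 5.1.4}), giving exactly $\chi_K\circ N_{K/E}^{-1}(x)$.

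It remains to identify $\det(\mathrm{Ind}_{E/F}\mathbf{1})$ with $\Delta_{E/F}$. As $E/F$ is Galois, $\mathrm{Ind}_{E/F}\mathbf{1}$ is the inflation to $G_F$ of the regular representation of $\Gamma:=\mathrm{Gal}(E/F)$, so its determinant is the inflation of the sign character $\varepsilon\colon\Gamma\to\{\pm1\}$ of the translation action of $\Gamma$ on itself. A cycle count gives $\varepsilon(\gamma)=(-1)^{(m-1)|\Gamma|/m}$ for $\gamma$ of order $m$, whence $\varepsilon$ is nontrivial exactly when the $2$--Sylow subgroup of $\Gamma$ is cyclic and nontrivial, i.e. when $\mathrm{rk}_2(\Gamma)=1$. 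In that case $\varepsilon$ is the unique order--$2$ character of $\Gamma$, cutting out the unique quadratic subextension $E'/F$, and hence corresponds under class field theory to $\omega_{E'/F}$; otherwise $\varepsilon$ is trivial. This reproduces the case division (\ref{eqn 5.1.13}), and multiplying the two factors yields (\ref{eqn 5.1.12}).

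The main obstacle I anticipate lies in the first reduction: one has to invoke the inclusion$\,\leftrightarrow\,$transfer functoriality of local class field theory and the behaviour of the transfer under passage to the quotient $G/Z$ carefully enough to reduce the map $F^\times\to\mathrm{Gal}(K/E)$ to the finite transfer $T_{(G/Z)/(H/Z)}$ of Lemma \ref{Lemma transfer Heisenberg}. The determinant formula for induced representations and the sign--character computation are classical; the genuinely arithmetic input is the vanishing of this transfer, which is precisely what converts the inclusion of $F^\times$ into $E^\times$ into the norm term $\chi_K\circ N_{K/E}^{-1}$.
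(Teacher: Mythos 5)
Your proposal is correct and takes essentially the same route as the paper's own proof: the same induced-determinant decomposition $\det(\mathrm{Ind}_{E/F}\chi_E)=\det(\mathrm{Ind}_{E/F}\mathbf{1})\cdot(\chi_E\circ\text{transfer})$ (the paper cites Gallagher's theorem), the same identification of the transfer with the inclusion $F^\times\hookrightarrow E^\times$ through the class-field-theoretic commutative diagram, the same appeal to Lemma \ref{Lemma transfer Heisenberg} to conclude $F^\times\subseteq\cN_{K/E}$, and the same use of Lemma \ref{Lemma 5.1.4} to rewrite $\chi_E(x)$ as $\chi_K\circ N_{K/E}^{-1}(x)$. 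The only (cosmetic) divergence is in evaluating $\det(\mathrm{Ind}_{E/F}\mathbf{1})$: you compute it as the sign character of the regular representation by cycle counting, whereas the paper, using that $\mathrm{Gal}(E/F)$ is abelian, writes it as the product of all characters of $\mathrm{Gal}(E/F)$ and invokes Miller's theorem; both give the identical case division (\ref{eqn 5.1.13}) on $\mathrm{rk}_2(\mathrm{Gal}(E/F))$.
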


\begin{proof}
 From the given condition, we can write $G/Z=\rm{Gal}(K/F)\supset H/Z=\rm{Gal}(K/E)$. Here both $G/Z$ and $H/Z$ are abelian, then from 
 class field theory we have the following commutative diagram
 \begin{equation}\label{diagram 5.1.13}
  \begin{CD}
  F^\times/\cN_{K/F}   @>inclusion>>  E^\times/\cN_{K/E}\\
  @VV\theta_{K/F}V                      @VV\theta_{K/E}V\\
  \rm{Gal}(K/F) @>T_{(G/Z)/(H/Z)}>> \rm{Gal}(K/E)
 \end{CD}
 \end{equation}
Here $\theta_{K/F}$, $\theta_{K/F}$ are the isomorphism (Artin reciprocity) maps and $T_{(G/Z)/(H/Z)}$ 
is transfer map. From Lemma \ref{Lemma transfer Heisenberg}, we have $T_{(G/Z)/(H/Z)}\equiv1$. Therefore from the above 
diagram (\ref{diagram 5.1.13}) we can say $F^\times\subseteq\cN_{K/E}$, i.e., all elements
\footnote{This condition $F^\times\subseteq\cN_{K/E}$ implies that for every $x\in F^\times$ must have a preimage under the 
$N_{K/E}$, but the preimage is not unique.} from the base field $F$ are norms with 
respect to the extension $K/E$.

Now identify $\chi_\rho=\chi_K:K^\times/I_FK^\times\to\bbC^\times$. Then the map 
$$x\in F^\times\mapsto\chi_K\circ N_{K/E}^{-1}(x)$$
is well-defined character of $F^\times$.

Now by Gallagher's Theorem (cf. \cite{GK}, Theorem $30.1.6$) (arithmetic side) we can write for all 
$x\in F^\times$,
\begin{equation}
 \det(\rho)(x)=\Delta_{E/F}(x)\cdot\chi_E(x)=\Delta_{E/F}(x)\cdot\chi_K(N_{K/E}^{-1}(x)),
\end{equation}
since $F^\times\subseteq\cN_{K/E}$, and $\chi_E|_{\cN_{K/E}}=\chi_K\circ N_{K/E}^{-1}$.

Furthermore,
since $E/F$ is an abelian extension, $\rm{Gal}(E/F)\cong\widehat{\rm{Gal}(E/F)}$, and 
from Miller's Theorem (cf. \cite{PC}, Theorem 6), we can write
\begin{align*}
 \Delta_{E/F}
 &=\det(\rm{Ind}_{E/F}(1))\\
 &=\det(\sum_{\chi\in\widehat{\rm{Gal}(E/F)}}\chi)\\
 &=\prod_{\chi\in\widehat{\rm{Gal}(E/F)}}\chi\\
 &=\begin{cases}
                  1 & \text{when $2$-rank $\rm{rk}_2(\rm{Gal}(E/F))\ne 1$}\\
                  \omega_{E'/F}(x) & \text{when $2$-rank $\rm{rk}_2(\rm{Gal}(E/F))= 1$},
                 \end{cases}
\end{align*}
where $E'/F$ is a uniquely determined quadratic subextension in $E/F$, and $\omega_{E'/F}$ is the character of $F^\times$ which 
corresponds to $E'/F$ by class field theory.

\end{proof}




\subsection{{\bf Heisenberg representations of $G_F$ of dimensions prime to $p$}}

Let $F/\bbQ_p$ be a non-archimedean local field, and $G_F$ be the absolute Galois group of $F$. In this subsection we construct all 
Heisenberg representations of $G_F$ of dimensions prime to $p$. Studying the construction of this type (i.e., dimension prime to $p$)
Heisenberg representations are important for our next section.

\begin{dfn}[{\bf U-isotropic}]\label{Definition U-isotropic}
Let $F$ be a non-archimedean local field.
 Let $X:FF^\times\to \bbC^\times$ be an alternating character with the property 
 $$X(\varepsilon_1,\varepsilon_2)=1,\qquad \text{for all $\varepsilon_1,\varepsilon_2\in U_F$}.$$
 In other words, $X$ is a character of $FF^\times/U_F\wedge U_F$. Then $X$ is said to be the U-isotropic. 
 These $X$ are easy to classify:
\end{dfn}

\begin{lem}\label{Lemma U-isotropic}
 Fix a uniformizer $\pi_F$ and write $U:=U_F$. Then we obtain an isomorphism 
 $$\widehat{U}\cong \widehat{FF^\times/U\wedge U}, \quad \eta\mapsto X_\eta,\quad \eta_X\leftarrow X$$
 between characters of $U$ and $U$-isotropic alternating characters as follows:
 \begin{equation}\label{eqn 5.1.25}
  X_\eta(\pi_F^a\varepsilon_1,\pi_F^b\varepsilon_2):=\eta(\varepsilon_1)^b\cdot\eta(\varepsilon_2)^{-a},\quad
  \eta_X(\varepsilon):=X(\varepsilon,\pi_F),
 \end{equation}
 where $a,b\in\bbZ$, $\varepsilon,\varepsilon_1,\varepsilon_2\in U$, and $\eta:U\to\bbC^\times$.
 Then 
 $$\rm{Rad}(X_\eta)=<\pi_F^{\#\eta}>\times\rm{Ker}(\eta)=<(\pi_F\varepsilon)^{\#\eta}>\times\rm{Ker}(\eta),$$
 does not depend on the choice of $\pi_F$, where  $\#\eta$ is the order of the character $\eta$, hence 
 $$F^\times/\rm{Rad}(X_\eta)\cong <\pi_F>/<\pi_F^{\#\eta}>\times U/\rm{Ker}(\eta)\cong \bbZ_{\#\eta}\times\bbZ_{\#\eta}.$$
 Therefore all Heisenberg representations of type $\rho=\rho(X_\eta,\chi)$ have dimension $\rm{dim}(\rho)=\#\eta$.
\end{lem}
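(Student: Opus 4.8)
The plan is to treat $X$ as a bilinear, antisymmetric pairing on $F^\times$ and to exploit the decomposition $F^\times = \langle \pi_F \rangle \times U$. First I would establish the bijection. Writing a general element as $\pi_F^a \varepsilon$ with $a \in \bbZ$, $\varepsilon \in U$, bilinearity gives
\begin{equation*}
 X(\pi_F^a\varepsilon_1, \pi_F^b\varepsilon_2) = X(\pi_F,\pi_F)^{ab}\, X(\pi_F,\varepsilon_2)^a\, X(\varepsilon_1,\pi_F)^b\, X(\varepsilon_1,\varepsilon_2).
\end{equation*}
Since $X$ is alternating, $X(\pi_F,\pi_F) = 1$; since $X$ is $U$-isotropic (Definition \ref{Definition U-isotropic}), $X(\varepsilon_1,\varepsilon_2) = 1$; and antisymmetry turns $X(\pi_F,\varepsilon_2)^a$ into $X(\varepsilon_2,\pi_F)^{-a}$. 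Hence $X$ is completely determined by the restriction $\eta_X := X(-,\pi_F)|_U$, and the displayed identity is precisely the defining formula for $X_{\eta_X}$. Conversely the same computation shows that the formula for $X_\eta$ produces a continuous alternating character that is $U$-isotropic (bilinearity and $X_\eta(g,g)=1$ are immediate from the formula). Substituting $\varepsilon$ in the first slot and $\pi_F$ in the second yields $\eta_{X_\eta} = \eta$, and the expansion above yields $X_{\eta_X} = X$, so the two assignments are mutually inverse; they are homomorphisms by inspection of the formulas, so they give the asserted isomorphism. Note that $\eta$ automatically has finite order $\#\eta$, because $U$ is profinite and a continuous character lands in a finite subgroup of $\bbC^\times$.

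Next I would compute the radical. For $g = \pi_F^a\varepsilon$ the condition $g \in \rm{Rad}(X_\eta)$ reads $\eta(\varepsilon)^b \eta(\varepsilon')^{-a} = 1$ for all $b \in \bbZ$ and all $\varepsilon' \in U$. Taking $\varepsilon' = 1$ with $b$ arbitrary forces $\eta(\varepsilon) = 1$, i.e. $\varepsilon \in \rm{Ker}(\eta)$; taking $b = 0$ with $\varepsilon'$ arbitrary forces $\eta^a$ to be the trivial character, i.e. $\#\eta \mid a$; and these two conditions are clearly sufficient. This gives $\rm{Rad}(X_\eta) = \langle \pi_F^{\#\eta}\rangle \times \rm{Ker}(\eta)$. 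For independence of the uniformizer, replacing $\pi_F$ by $\pi_F\varepsilon_0$ changes the generator to $(\pi_F\varepsilon_0)^{\#\eta} = \pi_F^{\#\eta}\varepsilon_0^{\#\eta}$, and $\varepsilon_0^{\#\eta} \in \rm{Ker}(\eta)$, so the subgroup generated together with $\rm{Ker}(\eta)$ is unchanged; this is also forced a priori, since $\rm{Rad}(X_\eta)$ is intrinsic to $X_\eta$.

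Finally, combining the two direct factors gives $F^\times/\rm{Rad}(X_\eta) \cong (\langle\pi_F\rangle/\langle\pi_F^{\#\eta}\rangle) \times (U/\rm{Ker}(\eta))$. The first factor is cyclic of order $\#\eta$; the second is isomorphic to the image $\eta(U) \subset \bbC^\times$, which is a finite subgroup of $\bbC^\times$ and hence cyclic of order $\#\eta$. Thus $F^\times/\rm{Rad}(X_\eta) \cong \bbZ_{\#\eta}\times\bbZ_{\#\eta}$ and $[F^\times : \rm{Rad}(X_\eta)] = (\#\eta)^2 < \infty$, which in particular confirms that $X_\eta$ genuinely lies in $\rm{Alt}(F^\times)$. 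The dimension formula (\ref{eqn dimension formula}), applied with $N = \rm{Rad}(X_\eta)$, then gives $\dim(\rho) = \sqrt{(\#\eta)^2} = \#\eta$.

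The computation is essentially routine; the only step needing genuine care is the bilinear expansion in the first paragraph, which is what simultaneously proves injectivity and surjectivity of the correspondence. The one mild subtlety worth flagging is that $X_\eta$ itself depends on the chosen uniformizer, so the claimed $\pi_F$-independence of $\rm{Rad}(X_\eta)$ deserves the explicit check above rather than being read off the formula.
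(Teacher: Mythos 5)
Your proposal is correct and takes essentially the same approach as the paper: both decompose $F^\times=\langle\pi_F\rangle\times U$, verify that $\eta\mapsto X_\eta$ and $X\mapsto\eta_X$ are mutually inverse by the same bilinear expansion, compute $\mathrm{Rad}(X_\eta)$ directly from the definition of the radical, and conclude with the dimension formula (\ref{eqn dimension formula}). One correction to your closing remark: $X_\eta$ does \emph{not} depend on the chosen uniformizer --- replacing $\pi_F$ by $\pi_F\varepsilon_0$ and expanding with $\eta(\varepsilon_0)^{-ab}\eta(\varepsilon_0)^{ab}=1$ shows the pairing itself is unchanged (this is precisely the check the paper performs), so the uniformizer-independence of the radical could have been read off from that stronger fact, although your direct verification via $\varepsilon_0^{\#\eta}\in\mathrm{Ker}(\eta)$ is also valid.
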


\begin{proof}
To prove $\widehat{U}\cong \widehat{FF^\times/U\wedge U}$, we have to show that $\eta_{X_\eta}=\eta$ and $X_{\eta_X}=X_\eta$, and that 
the inverse map $X\mapsto \eta_X$ does not depend on the choice of $\pi_F$.

From the above definition of $\eta_X$, we can write:
\begin{align*}
 \eta_{X_\eta}(\varepsilon)
 &=X_\eta(\epsilon,\pi_F)=\eta(\varepsilon)^{1}\cdot \eta(1)^0=\eta(\varepsilon), 
\end{align*}
for all $\varepsilon\in U$, hence $\eta_{X_\eta}=\eta$.

Similarly, from the above definition of $X$, we have:
\begin{align*}
 X_{\eta_X}(\pi_F^a\varepsilon_1,\pi_F^b\varepsilon)
 &=\eta_X(\varepsilon_1)^b\cdot\eta_X(\varepsilon_2)^{-a}=X(\varepsilon_1,\pi_F)^b\cdot X(\varepsilon_2,\pi_F)^{-a}\\
 &=X(\varepsilon_1,\pi_F)^b\cdot X(\pi_F,\varepsilon_2)^{a}=X(\varepsilon_1,\pi_F^b)\cdot X(\pi_F^a,\varepsilon_2)\\
 &=X(\pi_F^a\varepsilon_1,\pi_F^b\varepsilon).
\end{align*}
This shows that $X_{\eta_X}=X$.

Now we choose a uniformizer $\pi_F\varepsilon$, where $\varepsilon\in U$, instead of choosing $\pi_F$.
Then we can write 
\begin{align*}
 X_\eta((\pi_F\varepsilon)^a\varepsilon_1,(\pi_F\varepsilon)^b\varepsilon_2)
 &=X_\eta(\pi_F^a(\varepsilon^a\varepsilon_1),\pi_F^b(\varepsilon^b\varepsilon_2))\\
 &=\eta(\varepsilon^a\varepsilon_1)^b\cdot \eta(\varepsilon^b\varepsilon_2)^{-a}\\
 &=\eta(\varepsilon_1)^b\cdot \eta(\varepsilon_2)^{-a}\cdot \eta(\varepsilon^{ab-ab})\\
 &=\eta(\varepsilon_1)^b\cdot\eta(\varepsilon_2)^{-a}=X(\pi_F^a\varepsilon_1,\pi_F^b\varepsilon_2).
\end{align*}
This shows that $X_\eta$ does not depend on the choice of the uniformizer $\pi_F$. Similarly since 
$\eta_X(\varepsilon):=X(\varepsilon,\pi_F)$, it is clear that $\eta_X$ is also does not depend on the choice of the 
uniformizer $\pi_F$.

By the definition of the radical of $X_\eta$, we have:
 $$\rm{Rad}(X_\eta)=
 \{\pi_F^a\varepsilon\in F^\times\,|\; X_\eta(\pi_F^{a}\varepsilon,\pi_F^{b}\varepsilon')= 
 \eta(\varepsilon)^{b}\cdot \eta(\varepsilon')^{-a}=1\},$$
 for all $b\in \bbZ$, and $\varepsilon'\in U$. 
 
 Now if we fix a uniformizer $\pi_F\varepsilon'',$ where $\varepsilon''\in U$ instead of $\pi_F$, we can write:
 $$\rm{Rad}(X_\eta)=
 \{(\pi_F\varepsilon'')^a\varepsilon\in F^\times\,|\; X_\eta((\pi_F\varepsilon'')^{a}\varepsilon,(\pi_F\varepsilon'')^{b}\varepsilon')= 
 \eta(\varepsilon''^a\varepsilon)^{b}\cdot \eta(\varepsilon''^b\varepsilon')^{-a}=\eta(\varepsilon)^b\cdot\eta(\varepsilon')^{-a}=1\},$$

 This gives $\rm{Rad}(X_\eta)=<\pi_F^{\#\eta}>\times\rm{Ker}(\eta)=<(\pi_F\varepsilon)^{\#\eta}>\times\rm{Ker}(\eta)$, hence 
 $$F^\times/\rm{Rad}(X_\eta)\cong <\pi_F>/<\pi_F^{\#\eta}>\times U/\rm{Ker}(\eta)\cong \bbZ_{\#\eta}\times\bbZ_{\#\eta}.$$
 
 Then all Heisenberg representations of type $\rho=\rho(X_\eta,\chi)$ have dimension
 $$\rm{dim}(\rho)=\sqrt{[F^\times:\rm{Rad}(X_\eta)]}=\#\eta.$$
 
\end{proof}

From the above Lemma \ref{Lemma U-isotropic} we know that the dimension of a U-isotropic Heisenberg representation 
$\rho=\rho(X_\eta,\chi)$ of $G_F$ is $\rm{dim}(\rho)=\#\eta$, and $F^\times/\rm{Rad}(X_\eta)\cong \bbZ_{\#\eta}\times\bbZ_{\#\eta}$,
a direct product of two cyclic (bicyclic) groups of the same order $\#\eta$. In general, if $A=\bbZ_m\times\bbZ_m$ is a bicyclic
group of order $m^2$, then by the following lemma we can compute total number of elements of order $m$ in $A$, and 
number of cyclic complementary subgroup of a fixed cyclic subgroup of order $m$. 

\begin{lem}\label{Lemma on bicyclic abelian groups}
 Let $A\cong \bbZ_m\times\bbZ_m$ be a bicyclic abelian group of order $m^2$. Then:
 \begin{enumerate}
  \item Then number $\psi(m)$ of cyclic subgroups $B\subset A$ of order $m$ is a multiplicative arithmetic function 
  (i.e., $\psi(mn)=\psi(m)\psi(n)$ if $gcd(m,n)=1$).
  \item Explicitly we have 
  \begin{equation}
   \psi(m)=m\cdot\prod_{p|m}(1+\frac{1}{p}).
  \end{equation}
And the number of elements of order $m$ in $A$ is:
\begin{equation}
 \varphi(m)\cdot\psi(m)=m^2\cdot\prod_{p|m}(1-\frac{1}{p^2}).
\end{equation}
Here $p$ is a prime divisor of $m$ and $\varphi(n)$ is the Euler's totient function of $n$.
\item Let $B\subset A$ be cyclic of order $m$. Then $B$ has always a complementary subgroup $B'\subset A$ such that $A=B\times B'$,
and $B'$ is again cyclic of order $m$. And for $B$ fixed, the number of all different complementary subgroups 
$B'$ is $=m$.
 \end{enumerate}
\end{lem}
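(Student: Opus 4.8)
The plan is to reduce everything to the case of a prime power via the Chinese Remainder Theorem and then treat $\bbZ_{p^e}\times\bbZ_{p^e}$ as a free module of rank $2$ over the local ring $\bbZ/p^e\bbZ$. Writing $m=\prod_{p\mid m}p^{e_p}$, the isomorphism $A\cong\prod_{p\mid m}(\bbZ_{p^{e_p}}\times\bbZ_{p^{e_p}})$ carries an element (resp. a cyclic subgroup, resp. a complement) of order $m$ to a tuple of components, each of order $p^{e_p}$ in the corresponding factor. Since all the quantities I must count factor through this decomposition, multiplicativity in part (1) is immediate once the prime-power count is in hand; this also reduces parts (2) and (3) to the local factors.

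For the counting in (2), I would first count elements of order exactly $p^e$ in $\bbZ_{p^e}\times\bbZ_{p^e}$: the elements of order dividing $p^{e-1}$ are exactly those in $(p\bbZ_{p^e})\times(p\bbZ_{p^e})$, of which there are $p^{2e-2}$, so the number of elements of order $p^e$ is $p^{2e}-p^{2e-2}$. Multiplying over $p\mid m$ gives $m^2\prod_{p\mid m}(1-p^{-2})$ elements of order $m$ in $A$. To pass from elements to subgroups I use that each cyclic subgroup of order $m$ has exactly $\varphi(m)$ generators, that every element of order $m$ generates a unique such subgroup, and that distinct subgroups have disjoint generator sets; hence the number of order-$m$ elements equals $\varphi(m)\cdot\psi(m)$. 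Dividing and using $\varphi(m)=m\prod_{p\mid m}(1-p^{-1})$ together with $(1-p^{-2})/(1-p^{-1})=1+p^{-1}$ yields $\psi(m)=m\prod_{p\mid m}(1+p^{-1})$, and multiplicativity of $\psi$ is then visible directly from this closed form.

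For part (3), for existence of a complement I would again work at each prime power. An element $(a,b)$ of order $p^e$ is unimodular in the free $\bbZ/p^e\bbZ$-module $\bbZ_{p^e}\times\bbZ_{p^e}$ (at least one of $a,b$ is a unit), so it extends to a basis; the second basis vector generates a cyclic complement of order $p^e$. Assembling over the primes gives a complement $B'$ of $B$ in $A$, and the decomposition $A=B\times B'$ forces $B'/pB'\cong\bbZ/p\bbZ$ for every $p\mid m$ (since $A/pA\cong(\bbZ/p\bbZ)^2$ and $B/pB\cong\bbZ/p\bbZ$), so $B'$ is cyclic of order $m$. To count complements I normalize at each prime power so that $B_p=\langle e_1\rangle$; a cyclic subgroup is a complement of $\langle e_1\rangle$ exactly when it is generated by a vector with a unit second coordinate, and after scaling by that unit each complement has a unique generator of the form $(c,1)$ with $c\in\bbZ/p^e\bbZ$. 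Hence there are precisely $p^e$ complements locally, and by the product decomposition exactly $\prod_{p\mid m}p^{e_p}=m$ complements of $B$ in $A$.

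The step I expect to require the most care is the complement count in (3): one has to verify both that the normalized generator $(c,1)$ is genuinely unique for each complement (so that $c\mapsto\langle(c,1)\rangle$ is a bijection from $\bbZ/p^e\bbZ$ onto the set of complements) and that $\langle(c,1)\rangle$ really is a complement for every $c$, i.e. that it meets $\langle e_1\rangle$ trivially and that together they span. The unimodularity and basis-extension argument underlying existence relies on $\bbZ/p^e\bbZ$ being local, so I would state explicitly that ``unit $\iff$ not divisible by $p$'' before invoking it.
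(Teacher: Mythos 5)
Your proof is correct, and its skeleton is the same as the paper's: reduce to prime powers via the Chinese Remainder Theorem, count elements of maximal order and divide by $\varphi$ to pass to subgroups, then count complements after normalizing $B_p=\langle e_1\rangle$. The differences are in execution, and they mostly favor you. For the element count you use complementary counting (order dividing $p^{e-1}$ means both coordinates lie in $p\bbZ_{p^e}$, giving $p^{2e}-p^{2e-2}$ elements of exact order $p^e$), where the paper uses inclusion--exclusion ($2\varphi(p^n)p^n-\varphi(p^n)^2$ elements, then division by $\varphi(p^n)$); these are the same computation in different clothing. The genuine divergence is in part (3): the paper essentially asserts the existence of a cyclic complement (with a slip --- it says $B'$ is cyclic ``because $A$ is cyclic,'' which $A$ is not for $m>1$), whereas you actually prove existence by noting that a generator of $B_p$ is a unimodular vector in the free rank-$2$ module over the local ring $\bbZ/p^e\bbZ$ and therefore extends to a basis. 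Likewise, your normalization of complement generators to the form $(c,1)$ gives a clean bijection between complements of $\langle e_1\rangle$ and elements of $\bbZ/p^e\bbZ$, where the paper counts generators $(b,c)$ with $c$ a generator of the second factor and divides by $\varphi(p^n)$. What your module-theoretic framing buys is rigor at exactly the two points where the paper is weakest (existence of the complement, and the bijectivity behind the count); what the paper's framing buys is brevity and avoidance of any appeal to unimodularity and basis extension over a local ring.
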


\begin{proof}
To prove these assertions we need to recall the fact: If $G$ is a finite cyclic group of order $m$, then number of generators of 
$G$ is $\varphi(m)=m\prod_{p|m}(1-\frac{1}{p})$.\\  
 {\bf (1).} By the given condition $A\cong \bbZ_m\times\bbZ_m$ and $\psi(m)$ is the number of cyclic subgroup of $A$ of order $m$.
 Then it is clear that $\psi$ is an arithmetic function with $\psi(1)=1\ne 0$, hence $\psi$ is not {\bf additive}.
 Now take $m\ge 2$, and the prime factorization of $m$ is: $m=\prod_{i=1}^{k}p_{i}^{a_i}$.
 To prove this, first we should start with $m=p^n$, hence $A\cong \bbZ_{p^n}\times\bbZ_{p^n}$. Then number of subgroup of $A$ of order 
 $p^n$ is:
 $$\psi(p^n)=\frac{2\varphi(p^n)p^n-\varphi(p^n)^2}{\varphi(p^n)}=2p^n-\varphi(p^n)=p^n(2-1+\frac{1}{p})=p^n(1+\frac{1}{p}).$$
 
Now take $m=p^nq^r$, where $p,q$ are both prime with $gcd(p,q)=1$. 
 We also know that $\bbZ_{p^nq^r}\times\bbZ_{p^nq^r}\cong\bbZ_{p^n}\times\bbZ_{p^n}\times\bbZ_{q^r}\times\bbZ_{q^r}$.
 This gives $\psi(p^nq^r)=\psi(p^n)\cdot\psi(q^r)$. By the similar method we can show that 
 $\psi(m)=\prod_{i=i}^{k}\psi(p_{i}^{a_i})$, where  $m=\prod_{i=1}^{k}p_{i}^{a_i}$.
 This condition implies that $\psi$ is a multiplicative arithmetic function.\\

 {\bf (2).} Since $\psi$ is multiplicative arithmetic function, we have
 \begin{align*}
  \psi(m)
  &=\prod_{i=1}^{k}\psi(p_{i}^{a_i})=\prod_{i=1}^{k}p_{i}^{a_i}(1+\frac{1}{p_i})\quad\text{since $\psi(p^n)=p^n(1+\frac{1}{p})$},\\
  &=p_{1}^{a_1}\cdots p_{k}^{a_k}\prod_{i=1}^{k}(1+\frac{1}{p_i})=m\cdot\prod_{p|m}(1+\frac{1}{p}).
 \end{align*}
We also know that number of generator of a finite cyclic group of order $m$ is $\varphi(m)$, hence number of elements of order 
$m$ is $\varphi(m)$. Then the number of elements of order $m$ in $A$ is:
\begin{equation*}
 \varphi(m)\cdot\psi(m)=m\cdot\prod_{p|m}(1-\frac{1}{p})\cdot m\prod_{p|m}(1+\frac{1}{p})=m^2\cdot\prod_{p|m}(1-\frac{1}{p^2}).
\end{equation*}
{\bf (3).} Let $B\subset A$ be a cyclic subgroup of order $m$. Since $A$ is abelian and bicyclic of order $m^2$, $B$ has always 
a complementary subgroup $B'\subset A$ such that $A=B\times B'$, and $B'$ is again cyclic (because $A$ is cyclic, hence 
$A/B$ and $|A/B|=m$) of order $m$.

To prove the last part of (3), we start with $m=p^n$. Here $B$ is a cyclic subgroup of $A$ of order $p^n$, hence 
$B=<(a,e)>$, where $\# a=p^n$, and $e$ is the identity of $B'$. 
Since $B$ has complementary cyclic subgroup, namely $B'$, of order $p^n$. we can choose 
$B'=<(b,c)>$, where $B\cap B'=(e,e)$. This gives that $c$ is a generator of $B'$,  and $b$ could be any element in $\bbZ_{p^n}$.
Thus total number $\psi_{B'}(p^n)$ of all different complementary subgroups $B'$ is:
$$\psi_{B'}(p^n)=\frac{p^n\varphi(p^n)}{\varphi(p^n)}=p^n=m.$$
Now if we take $m=p^nq^r$, where $q$ is a different prime from $p$. Then by same method we can see that 
$\psi_{B'}(p^nq^r)=\psi_{B'}(p^n)\cdot\psi_{B'}(q^r)=p^nq^r=m$. Thus for arbitrary $m$ we can conclude that 
$\psi_{B'}(m)=m$.

\end{proof}

In the following lemma, we give an equivalent condition for U-isotropic Heisenberg representation.

\begin{lem}\label{Lemma U-equivalent}
Let $G_F$ be the absolute Galois group of a non-archimedean local field $F$.
 For a Heisenberg representation $\rho=\rho(Z,\chi_\rho)=\rho(X,\chi_K)$ the following are equivalent:
 \begin{enumerate}
  \item The alternating character $X$ is U-isotropic.
  \item Let $E/F$ be the maximal unramified subextension in $K/F$. Then $\rm{Gal}(K/E)$ is maximal isotropic for $X$.
  \item $\rho=\rm{Ind}_{E/F}(\chi_E)$ can be induced from a character $\chi_E$ of $E^\times$ (where $E$ is as in (2)).
 \end{enumerate}
\end{lem}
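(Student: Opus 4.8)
The plan is to prove the equivalence as a cycle $(1)\Rightarrow(2)\Rightarrow(3)\Rightarrow(1)$, using the arithmetic dictionary between alternating characters on $FF^\times$ and Heisenberg data established in Theorem \ref{Theorem 5.1.1} together with the $U$-isotropic classification of Lemma \ref{Lemma U-isotropic}. The central observation throughout is that, under local class field theory, the maximal unramified subextension $E/F$ inside $K/F$ corresponds on the idele-class side to the subgroup generated by $U_F$, i.e. $\rm{Gal}(K/E)$ is the image of $U_F$ (up to the norm group $N=\rm{Rad}(X)$) under the reciprocity map. So ``$E$ is the maximal unramified subextension'' translates precisely into ``$\rm{Gal}(K/E)$ corresponds to $U_F\cdot N/N\subseteq F^\times/N$''.

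For $(1)\Rightarrow(2)$, I would argue that if $X$ is $U$-isotropic then by definition $X(U_F,U_F)\equiv 1$, so $U_F$ is an isotropic subgroup of $F^\times/\rm{Rad}(X)$ with respect to the nondegenerate alternating form induced by $X$. By Lemma \ref{Lemma U-isotropic} we have $F^\times/\rm{Rad}(X_\eta)\cong\bbZ_{\#\eta}\times\bbZ_{\#\eta}$, with one factor generated by the image of $\pi_F$ and the other by the image of $U_F$ (via $\eta$); each factor has order equal to $\rm{dim}(\rho)=\#\eta$, and the image of $U_F$ is exactly one of these cyclic factors of maximal order. Since a maximal isotropic subgroup of a nondegenerate symplectic group $\bbZ_m\times\bbZ_m$ has order $m$, and the image of $U_F$ is already isotropic of order $m=\#\eta$, it is maximal isotropic. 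Translating back through reciprocity, $\rm{Gal}(K/E)$ with $E$ the maximal unramified subextension is maximal isotropic for $X$.

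For $(2)\Rightarrow(3)$, this is essentially the content of the induction construction already available to us. If $H=\rm{Gal}(K/E)$ is maximal isotropic, then by the preceding Lemma (the arithmetic induction lemma, the one immediately following Lemma \ref{Lemma 5.1.4}) the Heisenberg representation is $\rho=\rm{Ind}_{E/F}(\chi_E)$ for a suitable character $\chi_E$ of $E^\times$ with $\chi_K=\chi_E\circ N_{K/E}$, which is exactly statement (3). For $(3)\Rightarrow(1)$, I would run the computation of $X$ via the commutator pairing: if $\rho=\rm{Ind}_{E/F}(\chi_E)$ with $E/F$ unramified, then the maximal isotropic subgroup corresponds to $U_F$, so the radical contains a uniformizer power and $X$ factors through $FF^\times/U_F\wedge U_F$; concretely, $X(\varepsilon_1,\varepsilon_2)=\chi_\rho([\hat\varepsilon_1,\hat\varepsilon_2])$ vanishes for units since both lie in the isotropic factor corresponding to the unramified piece, giving $X(U_F,U_F)\equiv 1$, which is the definition of $U$-isotropic.

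The main obstacle I expect is pinning down $(1)\Rightarrow(2)$ rigorously, specifically the claim that the image of $U_F$ in $F^\times/\rm{Rad}(X)$ is not merely isotropic but \emph{maximal} isotropic, and that the corresponding field $E$ is \emph{exactly} the maximal unramified subextension rather than merely \emph{an} unramified one. This requires knowing precisely how $\rm{Rad}(X_\eta)$ meets $U_F$ and $\langle\pi_F\rangle$; here I would lean on the explicit description $\rm{Rad}(X_\eta)=\langle\pi_F^{\#\eta}\rangle\times\rm{Ker}(\eta)$ from Lemma \ref{Lemma U-isotropic}, which shows that modulo the radical the unit part is $U_F/\rm{Ker}(\eta)\cong\bbZ_{\#\eta}$ and is genuinely a maximal isotropic complement to the tamely/wildly ramified uniformizer direction. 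The delicate bookkeeping is to confirm that the subextension cut out by the image of $U_F$ under reciprocity is unramified (since norm groups containing $U_F$ correspond to unramified extensions) and maximal with this property, closing the loop with the standard class field theory fact that $E/F$ is unramified if and only if $U_F\subseteq\cN_{E/F}$.
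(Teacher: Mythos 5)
Your overall route is the same as the paper's: the cycle $(1)\Rightarrow(2)\Rightarrow(3)\Rightarrow(1)$, resting on Lemma \ref{Lemma U-isotropic}, the arithmetic induction lemma following Lemma \ref{Lemma 5.1.4}, and the class-field-theoretic identification of $\mathrm{Gal}(K/E)$ with the image of $U_F$. Your $(1)\Rightarrow(2)$ is in fact tighter than the paper's own text: you verify directly that the image of $U_F$ in $F^\times/\mathrm{Rad}(X_\eta)$, namely $U_F/\mathrm{Ker}(\eta)\cong\bbZ_{\#\eta}$, is isotropic of order $\#\eta$, hence equal to its own orthogonal complement and therefore maximal isotropic, and that under reciprocity it is the inertia subgroup, i.e.\ $\mathrm{Gal}(K/E)$ for $E/F$ the maximal unramified subextension. (The paper instead argues ``$H/Z$ maximal isotropic has index $\#\eta_X$, hence $H/Z=\mathrm{Gal}(K/E)$'', which tacitly assumes uniqueness of maximal isotropics; that is false here --- the image of $\langle\pi_F\rangle$ is another one --- so your version is the correct repair.) The appeal to the induction lemma for $(2)\Rightarrow(3)$ is also legitimate.

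The genuine gap is in $(3)\Rightarrow(1)$. As written your argument is circular: you conclude $X(\varepsilon_1,\varepsilon_2)=1$ ``since both lie in the isotropic factor corresponding to the unramified piece'', but under hypothesis $(3)$ alone the isotropy of $\mathrm{Gal}(K/E)$ is exactly what has to be proved; nothing in $(3)$ grants it. The missing step is short: since $E/F$ is Galois, $G_E$ is normal in $G_F$, so Mackey gives $\rho|_{G_E}=\oplus_{\sigma\in\mathrm{Gal}(E/F)}\chi_E^{\sigma}$, a direct sum of one-dimensional characters; hence $\rho([g_1,g_2])$ is the identity matrix for all $g_1,g_2\in G_E$, i.e.\ $\chi_\rho([g_1,g_2])=1$ (note $[g_1,g_2]\in G_K$ because $K/F$ is abelian), so $X$ vanishes on $\mathrm{Gal}(K/E)\times\mathrm{Gal}(K/E)$. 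Since $\mathrm{Gal}(K/E)$ is the image of $U_F$ under reciprocity ($E$ being the maximal unramified subextension), this is precisely $X(U_F,U_F)\equiv 1$, which is $(1)$. Equivalently, in arithmetic terms: $\rho|_{G_K}=m\cdot\chi_\rho$ forces $\chi_K=\chi_E\circ N_{K/E}$, which is then trivial on $K_E^\times\supseteq I_EK^\times$, where the commutators of lifts of units land. It is worth noting that the paper's own proof never derives anything from $(3)$ either --- it closes its cycle with $(2)\Rightarrow(1)$ --- so supplying this step would make your argument complete at the one place where an arrow out of $(3)$ is actually needed.
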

\begin{proof}
 This proof follows from the above Lemma \ref{Lemma U-isotropic}. \\
 First, assume that $X$ is U-isotropic, i.e., $X\in\widehat{FF^\times/U\wedge U}$. We also know that 
 $\widehat{U}\cong\widehat{FF^\times/U\wedge U}$. Then $X$ corresponds a character of $U$, namely $X\mapsto \eta_X$.
 Then from Lemma \ref{Lemma U-isotropic} we have $F^\times /\rm{Rad}(X)\cong \bbZ_{\#\eta_X}\times\bbZ_{\#\eta_X}$, i.e.,
 product of two cyclic groups of same order.
 
 Since $K/F$ is the abelian bicyclic extension which corresponds to $\rm{Rad}(X)$, we can write:
 $$\cN_{K/F}=\rm{Rad}(X),\qquad\rm{Gal}(K/F)\cong F^\times/\rm{Rad}(X).$$
Let $E/F$ be the maximal unramified subextension in $K/F$. Then $[E:F]=\#\eta_K$ because the order of 
maximal cyclic subgroup of $\rm{Gal}(K/F)$ is $\#\eta_X$. Then $f_{E/F}=\#\eta_X$, hence 
$f_{K/F}=e_{K/F}=\#\eta_X$ because $f_{K/F}\cdot e_{K/F}=[K:F]=\#\eta_X^2$ and $\rm{Gal}(K/F)$ is not cyclic group.

Now we have to prove that the extension $E/F$ corresponds to a maximal isotropic for $X$. Let $H/Z$ be a maximal isotropic for 
$X$, hence $[G_F/Z:H/Z]=\#\eta_X$, hence $H/Z=\rm{Gal}(K/E)$, i.e., the maximal unramified subextension $E/F$ in $K/F$ corresponds
to a maximal isotropic subgroup, hence 
\begin{center}
 $\rho(X,\chi_K)=\rm{Ind}_{E/F}(\chi_E)$, for $\chi_E\circ N_{K/E}=\chi_K$.
\end{center}
Finally, since $E/F$ is unramified and the extension $E$ corresponds a maximal isotropic subgroup for $X$, we have 
$U_F\subset\cN_{E/F}$, hence $U_F\subset\cN_{K/F}$ and $X|_{U\times U}=1$ because $U_F\subset F^\times\subset\cN_{K/E}$. 
This shows that $X$ is U-isotropic.
\end{proof}

\begin{cor}\label{Corollary U-isotropic}
 The U-isotropic Heisenberg representation $\rho=\rho(X_\eta,\chi)$ can never be wild because it is induced from 
 an unramified extension $E/F$, 
 but the dimension $\rm{dim}(\rho(X_\eta,\chi))=\#\eta$ can be a power of $p.$\\
The representations $\rho$ of dimension prime to p are precisely given as 
$\rho=\rho(X_\eta,\chi)$ for characters $\eta$ of $U/U^1.$
\end{cor}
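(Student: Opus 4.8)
The plan is to separate the two assertions of the corollary, since the first is essentially a reformulation of the two preceding lemmas while the genuine content lies in the second. For the first assertion I would invoke Lemma \ref{Lemma U-equivalent}: because $X_\eta$ is $U$-isotropic, $\rho = \mathrm{Ind}_{E/F}(\chi_E)$ with $E/F$ the maximal unramified subextension of $K/F$, so $\rho$ is induced from an unramified extension and is therefore not wild. The dimension statement is read off from Lemma \ref{Lemma U-isotropic}, which gives $\dim(\rho) = \#\eta$; since $\eta$ runs over all characters of $U = U_F$ and $U^1 = U_F^1$ is a pro-$p$ group, the order $\#\eta$ can be divisible by $p$, indeed a power of $p$, precisely when $\eta$ is nontrivial on $U^1$. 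For the second assertion the sufficiency direction is equally short: if $\eta$ factors through $U/U^1 \cong k_F^\times$ then $\#\eta \mid q_F - 1$ is prime to $p$, so $\dim(\rho) = \#\eta$ is prime to $p$.

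The substance is the converse. Starting from an arbitrary Heisenberg representation $\rho = \rho(X, \chi_K)$ with $\dim(\rho)$ prime to $p$, I would first convert the dimension hypothesis into a statement about $K/F$. By the dimension formula (\ref{eqn dimension formula}) together with $[F^\times : \mathrm{Rad}(X)] = [K:F]$ we get $[K:F] = \dim(\rho)^2$, which is prime to $p$; in particular $p \nmid e_{K/F}$, so $K/F$ is tamely ramified. Local class field theory then bounds the conductor of a tame abelian extension by $P_F$, which yields the inclusion $U_F^1 \subseteq \mathcal{N}_{K/F} = \mathrm{Rad}(X)$. Hence $X$ descends to an alternating character of $(F^\times/U_F^1) \wedge (F^\times/U_F^1)$. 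Now the decisive structural point is that $U_F/U_F^1 \cong k_F^\times$ is cyclic, so its $\bbZ$-exterior square vanishes; consequently $X(\varepsilon_1, \varepsilon_2) = 1$ for all $\varepsilon_1, \varepsilon_2 \in U_F$, i.e. $X$ is $U$-isotropic, and by Lemma \ref{Lemma U-isotropic} we may write $X = X_{\eta_X}$. Finally, from $U_F^1 \subseteq \mathrm{Rad}(X)$ the character $\eta_X(\varepsilon) = X(\varepsilon, \pi_F)$ is trivial on $U_F^1$ and so is a character of $U/U^1$, completing the identification.

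The step I expect to be the crux is the implication ``$[K:F]$ prime to $p \Rightarrow U_F^1 \subseteq \mathrm{Rad}(X)$.'' This rests on the identification $\mathrm{Rad}(X) = \mathcal{N}_{K/F}$ supplied by the correspondence of Theorem \ref{Theorem 5.1.1}, together with the class field theory description of tame ramification through the conductor bound. Once that inclusion is secured, everything afterward is formal: the reduction to the cyclic quotient $U_F/U_F^1$ and the vanishing of its $\bbZ$-exterior square make both the $U$-isotropy of $X$ and the factorization of $\eta_X$ through $U/U^1$ automatic, so the remaining verifications should be routine.
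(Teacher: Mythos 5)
Your proposal is correct, and it actually supplies more detail than the paper does: the paper disposes of this corollary in one line (``clear from Lemma \ref{Lemma U-isotropic} and the fact $|U/U^1|=q_F-1$''), leaving the converse direction --- that dimension prime to $p$ forces $X=X_\eta$ for $\eta$ a character of $U/U^1$ --- entirely implicit. Your decomposition (easy direction and non-wildness from Lemmas \ref{Lemma U-isotropic} and \ref{Lemma U-equivalent}, then a genuine converse argument) is the right way to fill that in. The one place where you take a genuinely different, and somewhat more roundabout, route is the key inclusion $U_F^1\subseteq\mathrm{Rad}(X)$: you pass through $[K:F]=\dim(\rho)^2$ prime to $p$, tameness of $K/F$, and the class-field-theoretic conductor bound for tame abelian extensions. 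That is valid, but the mechanism the paper uses in analogous spots (Example \ref{Example for Heisenberg reps}, Lemma \ref{Lemma 5.3.3}, Proposition \ref{Proposition 4.12}) is more elementary and avoids ramification theory altogether: $U_F^1$ is a pro-$p$ group, so its image in the finite quotient $F^\times/\mathrm{Rad}(X)$, whose order $\dim(\rho)^2$ is prime to $p$, must be trivial. After that inclusion, your two closing observations --- an alternating character necessarily vanishes on the cyclic group $U_F/U_F^1$, and $\eta_X=X(\cdot\,,\pi_F)$ kills $U_F^1$ --- are exactly the content hidden behind the paper's ``clear.'' One small wording slip: $\#\eta$ is a power of $p$ only when $\eta$ is nontrivial on $U^1$ \emph{and} trivial on the prime-to-$p$ part $\mu_{q_F-1}$ of $U_F$; nontriviality on $U^1$ alone gives merely $p\mid\#\eta$. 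This does not affect the corollary, which asserts only that a $p$-power dimension can occur.
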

\begin{proof}
 This is clear from the above lemma \ref{Lemma U-isotropic} and the fact $|U/U^1|=q_F-1$.
\end{proof}

\begin{rem}[{\bf Arithmetic description of representations $\rho(X_\eta,\chi)$:}] 
We let $K_\eta|F$ be the abelian bicyclic 
extension which corresponds to $\rm{Rad}(X_\eta):$

$$ \cN_{K_\eta/F}= \rm{Rad}(X_\eta),\qquad \rm{Gal}(K_\eta/F)\cong F^\times/\rm{Rad}(X_\eta).$$
Then we have $f_{K_\eta|F}= e_{K_\eta|F}=\#\eta$ and the maximal unramified subextension 
$E/F\subset K_\eta/F$ corresponds to a maximal isotropic subgroup, hence
$$ \rho(X_\eta,\chi) = \rm{Ind}_{E/F}(\chi_E),\quad\textrm{for}\; \chi_E\circ N_{K_\eta/E} =\chi.$$
We recall here that $\chi:K_\eta^\times/I_FK_\eta^\times\rightarrow\bbC^\times$ is a character such that
(cf. Theorem \ref{Theorem 5.1.1}(3))
$$ \chi|_{(K_\eta^\times)_F} \leftrightarrow X_\eta,\quad\textrm{with respect to}\; 
(K_\eta^\times)_F/I_FK_\eta^\times\cong F^\times/\rm{Rad}(X_\eta)\wedge F^\times/\rm{Rad}(X_\eta).$$
In particular, we see that $(K_\eta^\times)_F/I_FK_\eta^\times$ is cyclic of 
order $\#\eta$ and $\chi|_{(K_\eta^\times)_F}$ must be a faithful character of that cyclic group.
\end{rem}
In the following lemma we see the explicit description of the representation $\rho=\rho(X_\eta,\chi)$.

\begin{lem}[{\bf Explicit Lemma}]\label{Explicit Lemma}
 Let $\rho=\rho(X_\eta,\chi_K)$ be a U-isotropic Heisenberg representation of the absolute Galois group $G_F$ of a local field 
 $F/\bbQ_p$. Let $K=K_\eta$ and let $E/F$ be the maximal unramified subextension in $K/F$. Then: 
 \begin{enumerate}
  \item The norm map induces an isomorphism:
  $$N_{K/E}:K_F^\times/I_FK^\times\stackrel{\sim}{\to}I_FE^\times/I_F\cN_{K/E}.$$
  \item Let $c_{K/F}:F^\times/\rm{Rad}(X_\eta)\wedge F^\times/\rm{Rad}(X_\eta)\cong K_F^\times/I_FK^\times$ be the isomorphism
  which is induced by the commutator in the relative Weil-group $W_{K/F}$. Then for units $\varepsilon\in U_F$ we 
  explicitly have:
  $$c_{K/F}(\varepsilon\wedge\pi_F)=N_{K/E}^{-1}(N_{E/F}^{-1}(\varepsilon)^{1-\varphi_{E/F}}),$$
  where $\varphi_{E/F}$ is the Frobenius automorphism for $E/F$ and where $N^{-1}$ means to take a preimage of the norm map.
  \item The restriction $\chi_K|_{K_F^\times}$ is characterized by:
  $$\chi_K\circ c_{K/F}(\varepsilon\wedge\pi_F)=X_\eta(\varepsilon,\pi_F)=\eta(\varepsilon),$$
  for all $\varepsilon\in U_F$, where $c_{K/F}(\varepsilon\wedge\pi_F)$ is explicitly given via (2).
 \end{enumerate}

\end{lem}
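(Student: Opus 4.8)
The plan is to prove the three assertions in order, since the isomorphism in (1) is precisely what lets me transport the abstract group $K_F^\times/I_FK^\times$ to the computable group $I_FE^\times/I_F\cN_{K/E}$ when handling (2) and (3). Throughout I write $n:=\#\eta$, and I recall from the preceding remark that $E/F$ is unramified of degree $n$ (so $\mathrm{Gal}(E/F)$ is cyclic, generated by $\varphi_{E/F}$) and $K/E$ is totally ramified of degree $n$.

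For (1), I would first check both sides are finite of order $n$. On the left, the commutator isomorphism (\ref{eqn 5.1.3}) gives $K_F^\times/I_FK^\times\cong F^\times/\cN_{K/F}\wedge F^\times/\cN_{K/F}$, and since $F^\times/\cN_{K/F}\cong\mathrm{Gal}(K/F)\cong\bbZ_n\times\bbZ_n$, its alternating square is cyclic of order $n$. On the right, $E/F$ unramified cyclic gives $I_FE^\times=E_F^\times$ by Hilbert 90, while the map $y\mapsto y^{1-\varphi_{E/F}}$ induces a surjection $E^\times/\cN_{K/E}\to I_FE^\times/I_F\cN_{K/E}$ whose kernel is $F^\times\cN_{K/E}/\cN_{K/E}$; this kernel is trivial because $F^\times\subseteq\cN_{K/E}$ by Proposition \ref{Proposition arithmetic form of determinant}, so the right-hand group has order $[E^\times:\cN_{K/E}]=[K:E]=n$ by local class field theory. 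The map itself is well defined by the diagram on p.~302 of \cite{Z2} (equivalently, $N_{K/E}(x^{1-\sigma})=N_{K/E}(x)^{1-\varphi_{E/F}}$ for $\sigma\in\mathrm{Gal}(K/F)$ restricting to $\varphi_{E/F}$, as $\mathrm{Gal}(K/F)$ is abelian). For injectivity I would argue: if $x\in K_F^\times$ has $N_{K/E}(x)\in I_F\cN_{K/E}$, then $N_{K/E}(x)=N_{K/E}(w)$ for some $w\in I_FK^\times$, so $xw^{-1}\in K_E^\times=I_EK^\times\subseteq I_FK^\times$ by Hilbert 90 for the cyclic extension $K/E$, whence $x\in I_FK^\times$. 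Equal finite orders together with injectivity give the isomorphism.

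For (2), by the injectivity just established it suffices to show $N_{K/E}\!\left(c_{K/F}(\varepsilon\wedge\pi_F)\right)=N_{E/F}^{-1}(\varepsilon)^{1-\varphi_{E/F}}$ modulo $I_F\cN_{K/E}$, and I would compute the commutator inside the relative Weil group $W_{K/F}$. Writing $\theta$ for the reciprocity maps, norm-compatibility gives $\theta_{K/F}(\varepsilon)=\theta_{K/E}(u)\in\mathrm{Gal}(K/E)$ for any $u$ with $N_{E/F}(u)=\varepsilon$, while $\theta_{K/F}(\pi_F)$ is a Frobenius lift $\sigma$ with $\sigma|_E=\varphi_{E/F}$. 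Choosing Weil-group lifts $w_\varepsilon\in W_{K/E}$ and $w_{\pi_F}$ of these, the commutator $c_{K/F}(\varepsilon\wedge\pi_F)=[w_\varepsilon,w_{\pi_F}]$ lies in $K^\times$ (the two projections commute in the abelian $\mathrm{Gal}(K/F)$) and in the normal subgroup $W_{K/E}$. Now I invoke two standard functorialities of the relative Weil group: conjugation by $w_{\pi_F}$ acts on $W_{K/E}^{\mathrm{ab}}\cong E^\times$ as the Galois automorphism $\sigma|_E=\varphi_{E/F}$, and the reciprocity isomorphism $W_{K/E}^{\mathrm{ab}}\cong E^\times$ restricts on $K^\times$ to the norm $N_{K/E}$. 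Since the class of $w_\varepsilon$ in $W_{K/E}^{\mathrm{ab}}$ is $u$, the class of $[w_\varepsilon,w_{\pi_F}]=w_\varepsilon\,(w_{\pi_F}w_\varepsilon^{-1}w_{\pi_F}^{-1})$ is $u\cdot(u^{\varphi_{E/F}})^{-1}=u^{1-\varphi_{E/F}}$; applying the norm-restriction identity to the left side yields $N_{K/E}\!\left(c_{K/F}(\varepsilon\wedge\pi_F)\right)=u^{1-\varphi_{E/F}}=N_{E/F}^{-1}(\varepsilon)^{1-\varphi_{E/F}}$. Replacing $w_\varepsilon$ by $xw_\varepsilon$ with $x\in K^\times$ multiplies $u$ by $N_{K/E}(x)$ and changes the answer only by $N_{K/E}(x)^{1-\varphi_{E/F}}\in I_F\cN_{K/E}$, confirming well-definedness modulo $I_F\cN_{K/E}$.

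Part (3) is then immediate from the definitions: by Theorem \ref{Theorem 5.1.1}(3) the restriction $\chi_K|_{K_F^\times}$ corresponds to $X_\eta$ under the commutator isomorphism (\ref{eqn 5.1.3}), i.e.\ $\chi_K\circ c_{K/F}(\varepsilon\wedge\pi_F)=X_\eta(\varepsilon,\pi_F)$, and the explicit formula (\ref{eqn 5.1.25}) of Lemma \ref{Lemma U-isotropic} evaluates $X_\eta(\varepsilon,\pi_F)=\eta(\varepsilon)$, with $c_{K/F}(\varepsilon\wedge\pi_F)$ given concretely by (2). I expect the main obstacle to be part (2): all the real content sits in correctly invoking the two Weil-group functorialities (that outer conjugation by a Frobenius lift induces the Galois action on $W_{K/E}^{\mathrm{ab}}\cong E^\times$, and that reciprocity restricts to $N_{K/E}$ on $K^\times$) and in keeping the bookkeeping modulo $I_FK^\times$ and $I_F\cN_{K/E}$ consistent; by contrast (1) is a finite order count plus a Hilbert-90 injectivity, and (3) is purely the defining property of $\chi_K$.
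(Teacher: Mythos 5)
Your proof is correct, and it follows the same three-step skeleton as the paper --- establish the isomorphism (1) first, reduce (2) to computing $N_{K/E}\circ c_{K/F}(\varepsilon\wedge\pi_F)$ and then pull back, and read off (3) from the defining correspondence --- but the execution of the two substantive steps is genuinely different. The paper proves (1) and (2) essentially by citation to Zink's framework: the isomorphism in (1) is quoted from diagram (3.6.1) on p.~41 of \cite{Z4}, and the key identity in (2) is assembled from Proposition 1(iii) of \cite{Z5} (the general formula $c_{K/F}(x\wedge y)=N_{K/F}^{-1}(x)^{1-\phi_F(y)}$) together with diagrams in \cite{Z4} and \cite{Z2}. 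You instead re-derive both facts from scratch: for (1), an order count on the two sides (the alternating square of $\bbZ_n\times\bbZ_n$ is cyclic of order $n$, while the map $y\mapsto y^{1-\varphi_{E/F}}$ identifies $E^\times/\cN_{K/E}$ with $I_FE^\times/I_F\cN_{K/E}$ because its kernel $F^\times\cN_{K/E}/\cN_{K/E}$ vanishes by $F^\times\subseteq\cN_{K/E}$) combined with a Hilbert--90 injectivity argument; for (2), a direct computation of the commutator $[w_\varepsilon,w_{\pi_F}]$ in the relative Weil group, using the two standard functorialities (conjugation by a Frobenius lift acts as $\varphi_{E/F}$ on $W_{K/E}^{\mathrm{ab}}\cong E^\times$, and the reciprocity isomorphism restricts to $N_{K/E}$ on $K^\times$; these are the Weil-group axioms of \cite{JT2}, \S 1). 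Both computations check out, including the sign bookkeeping giving $u^{1-\varphi_{E/F}}$ rather than its inverse. What the paper's route buys is brevity and direct consistency with Zink's general results, of which this lemma is a special case; what your route buys is a self-contained argument checkable without access to \cite{Z4} and \cite{Z5}, and it makes explicit a point the paper leaves implicit, namely the well-definedness check that changing the lift $w_\varepsilon$ by $x\in K^\times$ only moves the answer by $N_{K/E}(x)^{1-\varphi_{E/F}}\in I_F\cN_{K/E}$. One small dependency worth stating explicitly in your write-up: the inclusion $F^\times\subseteq\cN_{K/E}$ from Proposition \ref{Proposition arithmetic form of determinant} applies because $\mathrm{Gal}(K/E)$ is maximal isotropic for $X_\eta$, which is exactly Lemma \ref{Lemma U-equivalent}(2) rather than just the unramifiedness of $E/F$.
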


\begin{proof}
 {\bf (1).} By the given conditions we have: $K=K_\eta,$ and $K/F$ is the bicyclic extension with $\rm{Rad}(X_\eta)=\cN_{K/F}$, and 
 $E/F$ is the maximal unramified subextension in $K/F$. So $K/E$ and $E/F$ both are cyclic, hence 
 $$E_F^\times=I_FE^\times,\qquad K_E^\times=I_EK^\times.$$
 From the diagram (3.6.1) on p. 41 of \cite{Z4}, we have 
 $$N_{K/E}: K_F^\times/I_FK^\times\stackrel{\sim}{\to} E_F^\times/I_F\cN_{K/E}.$$
 We also know that $E_F^\times=I_FE^\times$. Thus the norm map $N_{K/E}$ induces an isomorphism:
 $$N_{K/E}:K_F^\times/I_FK^\times\cong I_FE^\times/I_F\cN_{K/E}.$$
 {\bf (2).} By the given conditions, $c_{K/F}$ is the isomorphism which is induced by the commutator in 
 the relative Weil-group  $W_{K/F}$
 (cf. the map (\ref{eqn 5.1.3}). Here $\rm{Rad}(X_\eta)=\cN_{K/F}=:N$. Then from Proposition 1(iii) of \cite{Z5} on p. 128, we have 
 $$c_{K/F}: N\wedge F^\times/N\wedge N\stackrel{\sim}{\to} I_FK^\times/I_FK_F^\times$$
 as an isomorphism by the map:
 $$c_{K/F}(x\wedge y)=N_{K/F}^{-1}(x)^{1-\phi_F(y)},$$
 where $\phi_F(y)\in \rm{Gal}(K/F)$ for $y\in F^\times$ by class field theory.
 If $y=\pi_F$, then by class field theory (cf. \cite{JM}, p. 20, Theorem 1.1(a)), we can write 
 $\phi_F(\pi_F)|_{E}=\varphi_{E/F}$, where $\varphi_{E/F}$ is the Frobenius automorphism for $E/F$.
 
 Now we come to our special case.
Since $E/F$ is unramified, we have $U_F\subset\cN_{E/F}$, and we obtain (cf. \cite{Z4}, pp. 46-47 of Section 4.4 and 
the diagram on p. 302 of \cite{Z2}):
\begin{equation}\label{eqn explicit lemma}
 N_{K/E}\circ c_{K/F}(\varepsilon\wedge\pi_F)=N_{E/F}^{-1}(\varepsilon)^{1-\varphi_{E/F}}.
\end{equation}
We also know (see the first two lines under the upper diagram on p. 302 of \cite{Z2}) that
$E_F^\times\subseteq \cN_{K/E}$. Here 
$$N_{E/F}^{-1}(\varepsilon)^{1-\varphi_{E/F}}\in I_FE^\times/I_F\cN_{K/E}=E_F^\times/I_F\cN_{K/E},$$
because $E/F$ is cyclic, hence $E_F^\times=I_FE^\times$. Therefore from equation (\ref{eqn explicit lemma}) we can conclude:
$$c_{K/F}(\varepsilon\wedge\pi_F)=N_{K/E}^{-1}(N_{E/F}^{-1}(\varepsilon)^{1-\varphi_{E/F}}).$$
{\bf (3.)} We know that the $c_{K/F}(\varepsilon\wedge\pi_F)\in K_F^\times$ and $\chi_K:K^\times/I_FK^\times\to\bbC^\times$. 
Then we can write 
\begin{align*}
 \chi_K\circ c_{K/F}(\varepsilon\wedge\pi_F)
 &=\chi_K(N_{K/E}^{-1}(N_{E/F}^{-1}(\varepsilon)^{1-\varphi_{E/F}})\\
 &=\chi_E\circ N_{K/E}(N_{K/E}^{-1}(N_{E/F}^{-1}(\varepsilon)^{1-\varphi_{E/F}}), \quad\text{since $\chi_K=\chi_E\circ N_{K/E}$}\\
 &=\chi_E(N_{E/F}^{-1}(\varepsilon)^{1-\varphi_{E/F}})=X_\eta(\varepsilon,\pi_F)\\
 &=\eta(\varepsilon).
\end{align*}
This is true for all $\varepsilon\in U_F$. Therefore we can conclude that 
$\chi_K|_{K_F^\times}=\eta$.
\end{proof}

\begin{exm}[{\bf Explicit description of Heisenberg representations of dimension prime to $p$}]\label{Example for Heisenberg reps}

Let $F/\bbQ_p$ be a local field, and $G_F$ be the absolute Galois group of $F$.
Let $\rho=\rho(X,\chi_K)$ be a Heisenberg representation of $G_F$ of dimension $m$ prime to $p$. Then from 
Corollary \ref{Corollary U-isotropic} the alternating character $X=X_\eta$ is $U$-isotropic for a character
$\eta:U_F/U_F^1\to\bbC^\times$. Here from Lemma \ref{Lemma U-isotropic} 
we can say $m=\sqrt{[F^\times:\rm{Rad}(X_\eta)]}=\#\eta$ divides $q_F-1$.

Since $U_F^1$ is a pro-p-group and $gcd(m,p)=1$, we have $(U_F^1)^m=U_F^1\subset {F^\times}^m$, and therefore  
$$F^\times/{F^\times}^m\cong\bbZ_m\times\bbZ_m,$$
is a bicyclic group of order $m^2$. So by class field theory there is precisely one extension $K/F$ such that 
$\rm{Gal}(K/F)\cong\bbZ_m\times\bbZ_m$ and the norm group $\cN_{K/F}:=N_{K/F}(K^\times)={F^\times}^m$.

We know that $U_F/U_F^1$ is a cyclic group of order $q_F-1$, hence $\widehat{U_F/U_F^1}\cong U_F/U_F^1$. By the given condition 
$m|(q_F-1)$, hence $U_F/U_F^1$ has exactly one subgroup of order $m$. Then number of elements of order $m$ in $U_F/U_F^1$ is 
$\varphi(m)$, the Euler's $\varphi$-function of $m$.
In this setting, we have $\eta\in \widehat{U_F/U_F^1}\cong \widehat{FF^\times/U_F^1\wedge U_F^1}$ with 
$\#\eta=m$. This implies that up to $1$-dimensional character twist there are $\varphi(m)$ representations 
corresponding to $X_\eta$ where $\eta:U_F/U_F^1\to\bbC^\times$ is of order $m$.
According to Corollary 1.2 of \cite{Z2}, all dimension-m-Heisenberg 
representations of $G_F=\rm{Gal}(\overline{F}/F)$ are given as 
\begin{equation}
 \rho=\rho(X_\eta,\chi_K),\tag{1H}
\end{equation}
where $\chi_K: K^\times/ I_{F}K^\times\to\mathbb{C}^{\times}$ is a character 
such that the restriction of $\chi_K$
to the subgroup $K_{F}^{\times}$ corresponds to $X_\eta$ under the map (\ref{eqn 5.1.3}), and
\begin{equation}
 F^\times/{F^\times}^m\wedge F^\times/{F^\times}^m\cong K_{F}^{\times}/I_{F}K^\times,\tag{2H}
\end{equation}
which is given via the commutator in the relative Weil-group $W_{K/F}$ (for details arithmetic description of Heisenberg
representations of a Galois group, see \cite{Z2}, pp. 301-304).
The condition (2H) corresponds to (\ref{eqn 5.1.3}). Here the above Explicit Lemma \ref{Explicit Lemma} comes in.

Here due to our assumption both sides of (2H) are groups of order $m$.
And if one choice $\chi_K=\chi_0$ has been fixed, then all other $\chi_K$
are given as
\begin{equation}\label{eqn 4.20}
 \chi_K=(\chi_F\circ N_{K/F})\cdot\chi_0,
\end{equation}
for arbitrary characters of $F^\times$. For an optimal choice $\chi_K=\chi_0$, and order of $\chi_0$ we need the following lemma.

\begin{lem}\label{Lemma 5.3.3}
Let $K/F$ be the extension of $F/\bbQ_p$ for which $\rm{Gal}(K/F)=\bbZ_m\times\bbZ_m$. 
The $K_{F}^{\times}$ and $I_{F}K^\times$ are
as above. Then 
 the sequence 
 \begin{equation}\label{eqn 4.21}
  1\to U_{K}^{1}K_{F}^{\times}/U_{K}^{1}I_{F}K^\times\to U_K/U_{K}^{1}I_{F}K^\times\xrightarrow{N_{K/F}} U_F/U_{F}^{1}\to
  U_F/U_F\cap {F^\times}^m\to 1
 \end{equation}
is exact, and the outer terms are both of order $m$, hence inner terms are both cyclic of order $q_F-1$.
\end{lem}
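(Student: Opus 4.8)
The plan is to reduce the two cyclicity assertions to triviality and then concentrate on exactness together with the orders of the two outer terms. The third term $U_F/U_F^1\cong k_F^\times$ is cyclic of order $q_F-1$ with no work, and the second term $U_K/U_K^1 I_F K^\times$ is a quotient of $U_K/U_K^1\cong k_K^\times$, hence is \emph{automatically} cyclic; so ``hence inner terms are both cyclic of order $q_F-1$'' will follow the moment I know both outer terms have order $m$ and the sequence is exact, since for a finite exact sequence $1\to A\to B\to C\to D\to 1$ one has $|B|=|A||C|/|D|=m(q_F-1)/m=q_F-1$. Before anything else I would check the three maps are well defined: the middle map is induced by $N_{K/F}$, legitimate because $N_{K/F}(U_K^1)\subseteq U_F^1$ and $N_{K/F}(I_F K^\times)=1$; the last map is the natural surjection, legitimate because $U_F^1\subseteq {F^\times}^m$ (as $U_F^1$ is pro-$p$ and $\gcd(m,p)=1$, the $m$-th power map is a bijection of $U_F^1$), so $U_F^1\subseteq U_F\cap{F^\times}^m$; and the first map is the inclusion of subquotients, which is injective by inspection.

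Next I would dispose of the two ends. For the fourth term I would use $U_F\cap{F^\times}^m=U_F^m$ together with $U_F\cong\mu_{q_F-1}\times U_F^1$ and $(U_F^1)^m=U_F^1$; since $m\mid q_F-1$, the subgroup $\mu_{q_F-1}^m$ has index $m$, so $U_F/U_F^m\cong\mu_{q_F-1}/\mu_{q_F-1}^m$ has order $m$, and surjectivity of the last map gives exactness at the fourth term. For exactness at the third term I would invoke local class field theory: the norm subgroup is $\cN_{K/F}={F^\times}^m$, and since $v_F\circ N_{K/F}=m\,v_K$ a unit norm forces a unit argument, whence $N_{K/F}(U_K)=U_F\cap{F^\times}^m=U_F^m$. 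Thus the image of the middle map in $U_F/U_F^1$ is $U_F^m/U_F^1$, which is exactly the kernel $(U_F\cap{F^\times}^m)/U_F^1$ of the last map.

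The crux is exactness at the second term and the order of the first term, and for both I would use the single cohomological input $\widehat H^i(\mathrm{Gal}(K/F),U_K^1)=0$ for all $i$, valid because $|\mathrm{Gal}(K/F)|=m^2$ is prime to $p$ while $U_K^1$ is pro-$p$ (so $|G|$ both annihilates and acts invertibly on the cohomology). From $\widehat H^0=0$ I get $N_{K/F}(U_K^1)=U_F^1$; hence, given $x\in U_K$ with $N_{K/F}(x)\in U_F^1$, I may write $N_{K/F}(x)=N_{K/F}(w)$ with $w\in U_K^1$, so that $xw^{-1}\in K_F^\times$ and $x\in U_K^1 K_F^\times$. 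This shows the kernel of $N_{K/F}$ on the second term equals $U_K^1 K_F^\times/U_K^1 I_F K^\times$, the image of the first map (the reverse inclusion being immediate), i.e.\ exactness at the second term. From $\widehat H^{-1}=0$ I get $U_K^1\cap K_F^\times=I_G U_K^1\subseteq I_F K^\times$; this makes the natural surjection $K_F^\times/I_F K^\times\twoheadrightarrow U_K^1 K_F^\times/U_K^1 I_F K^\times$ injective, since by the modular law its kernel is $(U_K^1\cap K_F^\times)I_F K^\times/I_F K^\times=1$. Hence the first term is isomorphic to $K_F^\times/I_F K^\times$, which the preceding Remark identifies as cyclic of order $m$.

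Finally I would assemble the pieces: both outer terms have order $m$, the sequence is exact, so the second term has order $q_F-1$, and both inner terms are cyclic of that order as explained at the outset. The main obstacle is genuinely the middle of the sequence, and it rests entirely on the vanishing $\widehat H^\ast(\mathrm{Gal}(K/F),U_K^1)=0$ and its two avatars, namely surjectivity of the tame norm onto $U_F^1$ and the Hilbert-$90$-type equality $U_K^1\cap K_F^\times=I_G U_K^1$, supplemented by the class field theory fact $\cN_{K/F}={F^\times}^m$. If one prefers to avoid Tate cohomology, these two avatars can instead be established directly by successive approximation in the tame extension $K/F$, but the cohomological route is the most economical.
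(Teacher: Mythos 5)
Your proof is correct, and it is actually more complete than the one in the paper; the skeleton, however, is the same. Like the paper, you get the order of the fourth term from $U_F\cap {F^\times}^m=U_F^m$ together with $U_F\cong\mu_{q_F-1}\times U_F^1$ and $(U_F^1)^m=U_F^1$ (the paper does this in a footnote), you identify the first term with $K_F^\times/I_FK^\times$, whose order is $m$ by the assumption recorded in the preceding Remark, and you then obtain the order and cyclicity of the inner terms by counting along the exact sequence and quotienting the cyclic groups $U_K/U_K^1$ and $U_F/U_F^1$. The differences are in the middle steps. The paper disposes of exactness in one sentence (``because ${F^\times}^m=N_{K/F}(K^\times)$ is the group of norms''), which at best covers exactness at the third term; exactness at the second term genuinely needs the tame norm surjectivity $N_{K/F}(U_K^1)=U_F^1$, which the paper never isolates and which you supply via $\widehat{H}^0(\mathrm{Gal}(K/F),U_K^1)=0$. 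For the first term, the paper introduces the auxiliary exact sequence $1\to (U_K^1\cap K_F^\times)/(U_K^1\cap I_FK^\times)\to K_F^\times/I_FK^\times\to U_K^1K_F^\times/U_K^1I_FK^\times\to 1$ and kills its left-hand term by the elementary remark that a subquotient of the pro-$p$ group $U_K^1$ injecting into a group of order $m$ prime to $p$ must be trivial; your argument via $\widehat{H}^{-1}(\mathrm{Gal}(K/F),U_K^1)=0$ and the modular law (giving $U_K^1\cap K_F^\times=I_GU_K^1\subseteq I_FK^\times$) reaches the same isomorphism of the first term with $K_F^\times/I_FK^\times$; the two are avatars of the same prime-to-$p$ phenomenon, yours phrased cohomologically, the paper's phrased by order considerations. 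What your route buys is a single uniform source, the vanishing of Tate cohomology of $U_K^1$ for the tame extension $K/F$, feeding every nontrivial step, plus a genuinely complete verification of exactness; what the paper's route buys is brevity and avoidance of cohomology, at the cost of leaving exactness at the second and third terms essentially to the reader.
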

\begin{proof}
 The sequence is exact because ${F^\times}^m=N_{K/F}(K^\times)$ is the group of norms, and 
 $F^\times/{F^\times}^m\cong \bbZ_m\times\bbZ_m$ implies
 that the right hand term\footnote{Since $gcd(m,p)=1$, we have 
 \begin{center}
  $U_F\cdot{F^\times}^m=(<\zeta>\times U_F^1)(<\pi_F^m>\times<\zeta^m>\times U_F^1)=<\pi_F^m>\times<\zeta>\times U_F^1$,
 \end{center}
where $\zeta$ is a $(q_F-1)$-st root of unity. 
Then 
\begin{center}
 $U_F/U_F\cap {F^\times}^m=U_F\cdot {F^\times}^m/{F^\times}^m=
 <\pi_F^m>\times<\zeta>\times U_F^1/<\pi_F^m>\times<\zeta^m>\times U_F^1\cong\bbZ_m$.
\end{center}
Hence $|U_F/U_F\cap{F^\times}^m|=m$.} is of order $m$. By our assumption the order of $K_{F}^{\times}/I_{F}K^\times$ is $m$. Now 
 we consider the exact sequence
 \begin{equation}\label{sequence 5.1.25}
  1\to U_{K}^{1}\cap K_{F}^{\times}/U_{K}^{1}\cap I_{F}K^\times\to K_{F}^{\times}/I_{F}K^\times\to 
  U_{K}^{1}K_{F}^{\times}/U_{K}^{1}I_{F}K^\times\to 1.
 \end{equation}
Since the middle term has order $m$, the left term must have order $1$, because $U_{K}^{1}$ is a pro-p-group and $gcd(m,p)=1$.
Hence the right term is also of order $m$. So the outer terms of the sequence (\ref{eqn 4.21}) have both order $m$, hence the inner 
terms must have the same order $q_F-1=[U_F:U_{F}^{1}]$, and they are cyclic, because the groups $U_F/U_{F}^{1}$ and $U_K/U_{K}^{1}$
are both cyclic.
\end{proof}

{\bf\large{We now are in a position to choose $\chi_K=\chi_0$ as follows}}: 
\begin{enumerate}
 \item we take $\chi_0$ as a character of $K^\times/U_{K}^{1}I_{F}K^\times$,
 \item we  take it on $U_{K}^{1}K_{F}^{\times}/U_{K}^{1}I_{F}K^\times$ as it is prescribed by the above 
 Explicit Lemma \ref{Explicit Lemma},
 in particular, $\chi_0$ restricted to that subgroup (which is cyclic of order $m$) will be faithful.
 \item we take it trivial on all primary components of the cyclic group $U_{K}/U_{K}^{1}I_{F}K^\times$ which are not $p_i$-primary,
 where $m=\prod_{i=1}^{n}p_i^{a_i}$.
 \item we take it trivial for a fixed prime element $\pi_K$.
\end{enumerate}

Under the above optimal choice of $\chi_0$, we have

\begin{lem}\label{Lemma 5.1.17}
Denote $\nu_p(n):=$ as the highest power of $p$ for which $p^{\nu_p(n)}|n$.
 The character $\chi_0$ must be a character of order 
 $$m_{q_F-1}:=\prod_{l|m}l^{\nu_l(q_F-1)},$$
 which we will call the $m$-primary part of $q_F-1$, so it determines a cyclic
extension $L/K$ of degree $m_{q_F-1}$ which is totally tamely ramified, and we can consider 
the Heisenberg representation $\rho=(X,\chi_0)$ of 
$G_F=\rm{Gal}(\overline{F}/F)$ is a representation of $\rm{Gal}(L/F)$, which is of order $m^2\cdot m_{q_F-1}$.
\end{lem}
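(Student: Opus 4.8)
The plan is to first pin down the exact order of $\chi_0$ from the four optimal choices together with Lemma \ref{Lemma 5.3.3}, and then read off the extension $L/K$ and the quotient $\mathrm{Gal}(L/F)$ by local class field theory. First I would record, from Lemma \ref{Lemma 5.3.3}, that $C:=U_K/U_K^1 I_F K^\times$ is cyclic of order $q_F-1$, and that $U_K^1 K_F^\times/U_K^1 I_F K^\times$ (which has order $m$, and lies inside $C$ because $K_F^\times\subseteq U_K$, norm-$1$ elements being units) is its unique subgroup of order $m$. By choice $(1)$, $\chi_0$ is trivial on $U_K^1$, and by choice $(4)$ it is trivial on $\pi_K$; since $K^\times=\langle\pi_F\rangle\cdot U_K$ is generated modulo $U_K^1 I_F K^\times$ by the images of $\pi_K$ and of $U_K$, the character $\chi_0$ is determined by its restriction to $C$, and $\mathrm{ord}(\chi_0)=\mathrm{ord}(\chi_0|_C)$.

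Next I would compute this order one primary component at a time. Write $C=\prod_l C_l$ for the decomposition into Sylow-$l$-subgroups, so $|C_l|=l^{\nu_l(q_F-1)}$. By choice $(3)$, $\chi_0|_{C_l}=1$ for every prime $l\nmid m$, so these components contribute nothing. For a prime $l\mid m$, the order-$m$ subgroup meets $C_l$ in the unique cyclic subgroup of order $l^{\nu_l(m)}$, and choice $(2)$ forces $\chi_0$ to be faithful on it. The elementary observation that drives the whole lemma is then: a character of a cyclic $l$-group of order $l^{b}$ that is injective on its subgroup of order $l^{a}$ with $a\ge 1$ must itself have order $l^{b}$ (if its order were $l^{c}$ with $c<b$, its restriction to the order-$l^{a}$ subgroup would have order $l^{\max(0,\,c-b+a)}<l^{a}$, contradicting injectivity). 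Applying this with $b=\nu_l(q_F-1)$ and $a=\nu_l(m)\ge 1$ gives $\mathrm{ord}(\chi_0|_{C_l})=l^{\nu_l(q_F-1)}$, and multiplying over $l\mid m$ yields $\mathrm{ord}(\chi_0)=\prod_{l\mid m}l^{\nu_l(q_F-1)}=m_{q_F-1}$.

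With the order in hand, local class field theory attaches to $\chi_0$ a cyclic extension $L/K$ with $\mathrm{Gal}(L/K)\cong K^\times/\mathrm{Ker}(\chi_0)$ of degree $m_{q_F-1}$. It is tamely ramified because $\chi_0$ is trivial on $U_K^1$, so its conductor is at most $1$ (equivalently because $m_{q_F-1}\mid q_F-1$ is prime to $p$); and it is totally ramified because $\chi_0(\pi_K)=1$ together with $K^\times=\langle\pi_K\rangle U_K$ gives $K^\times=U_K\,\mathrm{Ker}(\chi_0)$, i.e.\ residue degree $1$. To finish, I would identify the kernel of $\rho$: since $\mathrm{Ker}(\rho)\subseteq\mathrm{Ker}(\overline{\rho})=Z_\rho=G_K$ and $\rho|_{Z_\rho}=\chi_\rho\cdot E=\chi_0\cdot E$, we get $\mathrm{Ker}(\rho)=\{z\in G_K:\chi_0(z)=1\}=\mathrm{Ker}(\chi_0)=\mathrm{Gal}(\overline{F}/L)$. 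As $\mathrm{Ker}(\rho)$ is normal in $G_F$, the extension $L/F$ is Galois, $\rho$ descends to a faithful representation of $\mathrm{Gal}(L/F)$, and $|\mathrm{Gal}(L/F)|=[L:F]=[L:K]\,[K:F]=m_{q_F-1}\cdot m^2$.

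I expect the main point needing care to be the elementary cyclic-$l$-group step in the second paragraph: it is exactly what converts the faithfulness hypothesis $(2)$ into the \emph{full} primary order $l^{\nu_l(q_F-1)}$ rather than merely $l^{\nu_l(m)}$, and hence what produces the $m$-primary part $m_{q_F-1}$ of $q_F-1$ instead of $m$ itself. Everything after that, including total tame ramification and the passage from $\mathrm{Ker}(\chi_0)$ to $\mathrm{Ker}(\rho)$, is a direct bookkeeping with the reciprocity map and the Heisenberg relation $\rho|_{Z_\rho}=\chi_\rho\cdot E$.
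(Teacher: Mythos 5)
Your proof is correct and follows essentially the same route as the paper's: pin down $\mathrm{ord}(\chi_0)=m_{q_F-1}$ using Lemma \ref{Lemma 5.3.3} together with the optimal choices, let class field theory produce the cyclic, totally tamely ramified extension $L/K$ (tame since the degree is prime to $p$, totally ramified since $\pi_K\in\mathrm{Ker}(\chi_0)=N_{L/K}(L^\times)$), and identify $\mathrm{Ker}(\rho)=G_L$ so that $\rho$ factors through $\mathrm{Gal}(L/F)$ of order $m^2\cdot m_{q_F-1}$. If anything, your write-up is tighter than the paper's at the two places it is terse: your primary-decomposition argument (the cyclic $l$-group observation converting faithfulness on the order-$m$ subgroup into the full order $l^{\nu_l(q_F-1)}$ on each component) supplies the computation behind the paper's bare assertion that the order is $m_{q_F-1}$, and your derivation of $\mathrm{Ker}(\rho)=\mathrm{Ker}(\chi_0)$ from $\mathrm{Ker}(\rho)\subseteq Z_\rho$ and $\rho|_{Z_\rho}=\chi_\rho\cdot E$ justifies an equality the paper states without proof.
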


\begin{proof}
By the given conditions, $m|q_F-1$. Therefore we can write
$$q_F-1=\prod_{l|m}l^{\nu_l(q_F-1)}\cdot \prod_{p|q_F-1,\; p\nmid m}p^{\nu_p(q_F-1)}=
m_{q_F-1}\cdot \prod_{p|q_F-1,\;p\nmid m}p^{\nu_p(q_F-1)},$$
where $l, p$ are prime, and $m_{q_F-1}=\prod_{l|m}l^{\nu_l(q_F-1)}$.

From the construction of $\chi_0$, $\pi_K\in\rm{Ker}(\chi_0)$, hence the order of $\chi_0$ comes from the restriction to 
$U_K$. Then the order of $\chi_0$ is $m_{q_F-1}$, because from Lemma \ref{Lemma 5.3.3}, the order of $U_K/U_{K}^{1}I_FK$ is
$q_F-1$. Since order of $\chi_0$ is $m_{q_F-1}$, by class field theory $\chi_0$ determines a cyclic 
extension $L/K$ of degree $m_{q_F-1}$, hence 
$$N_{L/K}(L^\times)=\rm{Ker}(\chi_0)=\rm{Ker}(\rho).$$
This means $G_L$ is the kernel of $\rho(X,\chi_0)$, hence $\rho(X,\chi_0)$ is actually a representation of 
$G_F/G_L\cong\rm{Gal}(L/F)$.

Since $G_L$ is normal subgroup of $G_F$, hence $L/F$ is a normal extension of degree $[L:F]=[L:K]\cdot[K:F]=m_{q_F-1}\cdot m^2$.
Thus $\rm{Gal}(L/F)$ is of order $m^2\cdot m_{q_F-1}$.

Moreover since $[L:K]=m_{q_F-1}$ and $gcd(m,p)=1$, $L/K$ is tame. By construction we have a prime 
$\pi_K\in\rm{Ker}(\chi_0)=N_{L/K}(L^\times)$, hence $L/K$ is totally ramified extension. 

\end{proof}

\begin{lem}(Here $L$, $K$, and $F$ are the same as in Lemma \ref{Lemma 5.1.17})
 Let $F^{ab}/F$ be the maximal abelian extension. Then we have 
$$L\supset L\cap F^{ab}\supset K\supset F, \quad\{1\}\subset G'\subset Z(G)\subset G=\rm{Gal}(L/F),$$
where $[L:L\cap F^{ab}]=|G|=m$ and $[L:K]=|Z(G)|=m_{q_F-1}$.
\end{lem}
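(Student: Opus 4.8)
The plan is to read the entire statement off the Galois correspondence, once the subgroups $\{1\}\subseteq G'\subseteq Z(G)\subseteq G$ of $G=\mathrm{Gal}(L/F)$ have been matched with the four fields. By Lemma \ref{Lemma 5.1.17} we have $G_L=\mathrm{Ker}(\rho)$, so $\rho=\rho(X_\eta,\chi_0)$ descends to a \emph{faithful} irreducible representation of $G$; and by Remark \ref{Remark 2.10}, $G$ is two-step nilpotent, so $G'=[G,G]\subseteq Z(G)$ and the subgroup chain is automatic.

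First I would identify the center. Since $\rho$ is faithful and irreducible, Schur's lemma shows $Z(G)$ equals the scalar group $Z_\rho$: any $z\in Z(G)$ gives $\rho(z)$ commuting with all of $\rho(G)$, hence scalar, and conversely if $\rho(z)$ is scalar it commutes with everything, so $z\in Z(G)$ by faithfulness. Under the arithmetic dictionary $Z_\rho=G_K$, so in the quotient $Z(G)=G_K/G_L=\mathrm{Gal}(L/K)$, cyclic of order $[L:K]=m_{q_F-1}$ by Lemma \ref{Lemma 5.1.17}. This gives $L^{Z(G)}=K$ and $[L:K]=|Z(G)|=m_{q_F-1}$ at once.

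Next I would compute $|G'|$, which is the one step requiring real care. Because $G'\subseteq Z(G)$, the commutator descends to a well-defined alternating pairing on $G/Z(G)\cong\mathrm{Gal}(K/F)\cong\bbZ_m\times\bbZ_m$ with values in $G'$, and composing with the faithful central character $\chi_\rho$ returns exactly the nondegenerate alternating character $X=X_\eta$ of Proposition \ref{Proposition 3.1}(c). A nondegenerate alternating bicharacter on $\bbZ_m\times\bbZ_m$ sends a symplectic basis $(e_1,e_2)$ to a \emph{primitive} $m$-th root of unity (otherwise its radical would be nontrivial), so its image is the full group $\mu_m$. As $\chi_\rho$ is injective on the cyclic group $Z(G)$, the equality $\chi_\rho(G')=\mu_m$ forces $G'$ to be the unique subgroup of order $m$ in $Z(G)$; hence $|G'|=m$. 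Such a subgroup exists because $m\mid q_F-1$ forces $m\mid m_{q_F-1}=|Z(G)|$.

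Finally I would invoke standard Galois theory: $L^{G'}=L^{[G,G]}$ is the maximal subextension of $L/F$ abelian over $F$, which is exactly $L\cap F^{ab}$ (it lies in both $L$ and $F^{ab}$, and any subfield of $L$ abelian over $F$ is fixed by $[G,G]$). Therefore $[L:L\cap F^{ab}]=|G'|=m$ (the order of the commutator subgroup, matching the $\{1\}\subset G'$ end of the chain), and the tower $L\supset L\cap F^{ab}\supset K\supset F$ is paired term by term with $\{1\}\subset G'\subset Z(G)\subset G$, as claimed. The main obstacle is purely the $|G'|=m$ computation: everything hinges on translating nondegeneracy of $X$ on $\bbZ_m\times\bbZ_m$ into surjectivity of $\chi_\rho$ composed with the commutator pairing onto $\mu_m$, for which both the nondegeneracy in Proposition \ref{Proposition 3.1}(c) and the faithfulness of $\chi_\rho$ on the center are indispensable.
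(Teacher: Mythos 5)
Your proof is correct, and you have rightly read the misprint $[L:L\cap F^{ab}]=|G|=m$ in the statement as $[L:L\cap F^{ab}]=|G'|=m$. The outer skeleton coincides with the paper's: Galois theory identifies $\mathrm{Gal}(L/L\cap F^{ab})$ with $G'=[G,G]$, two-step nilpotency (Remark \ref{Remark 2.10}) gives $G'\subseteq Z(G)$, and $Z(G)=\mathrm{Gal}(L/K)$ has order $m_{q_F-1}$. The genuine difference is in the key computation $|G'|=m$. The paper does it arithmetically: it identifies $G'$ with $K_F^\times/I_FK^\times$ via the commutator isomorphism (\ref{eqn 5.1.3}) and then quotes $|K_F^\times/I_FK^\times|=m$ from the sequence (\ref{sequence 5.1.25}) inside Lemma \ref{Lemma 5.3.3}; that identification tacitly uses that $\chi_0$ is faithful on $K_F^\times/I_FK^\times$. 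You instead stay entirely inside finite group theory: the commutator pairing on $G/Z(G)\cong\bbZ_m\times\bbZ_m$ composed with $\chi_\rho$ is the nondegenerate alternating character of Proposition \ref{Proposition 3.1}(c), its image is therefore the full group $\mu_m$ of $m$-th roots of unity, and injectivity of $\chi_\rho$ on $Z(G)$ (a consequence, via Schur's lemma, of faithfulness of $\rho$ on $G=G_F/\mathrm{Ker}(\rho)$) turns $\chi_\rho(G')=\mu_m$ into $|G'|=m$. These are two dressings of one fact, since under (\ref{eqn 5.1.3}) the nondegeneracy of $X_\eta$ \emph{is} the faithfulness of $\chi_0|_{K_F^\times}$; but your route buys two things: it supplies the Schur's-lemma justification that $Z(G)$ is exactly the scalar group (which the paper asserts without argument before invoking $\dim(\rho)=\sqrt{[G:Z]}$), and it avoids the paper's loosely written chain $G'=[G_F,G_F]/(\mathrm{Ker}(\rho)\cap[G_F,G_F])=[G_F,G_F]/[[G_F,G_F],G_F]$, whose second equality cannot be meant literally, since by (\ref{eqn 5.1.2}) that last quotient is the infinite group $FF^\times$. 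What the paper's route buys in exchange is the explicit arithmetic realization $G'\cong K_F^\times/I_FK^\times$, which is the form in which this commutator subgroup is used elsewhere in the article.
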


\begin{proof}
 Let $F^{ab}/F$ be the maximal abelian extension. Then we have 
$$L\supset L\cap F^{ab}\supset K\supset F.$$
Here $L\cap F^{ab}/F$ is the maximal abelian in $L/F$. Then from Galois theory we can conclude 
$$\rm{Gal}(L/L\cap F^{ab})=[\rm{Gal}(L/F), \rm{Gal}(L/F)]=: G'.$$
Since $\rm{Gal}(L/F)=G_F/\rm{Ker}(\rho)$, and $[[G_F,G_F],G_F]\subseteq\rm{Ker}(\rho)$, from relation (\ref{eqn 5.1.3}) we have 
$$G'=[G_F,G_F]/\rm{Ker}(\rho)\cap [G_F,G_F]=[G_F,G_F]/[[G_F,G_F],G_F]\cong K_F^\times/I_FK^\times.$$
Again from sequence \ref{sequence 5.1.25} we have $|U_K^1K_F^\times/U_K^1 I_FK^\times|=|K_F^\times/I_FK^\times|=m$.
Hence $|G'|=m$.


From the Heisenberg property of $\rho$, we have 
$[[G_F,G_F],G_F]\subseteq\rm{Ker}(\rho)$, hence $\rm{Gal}(L/F)=G_F/\rm{Ker}(\rho)$ is a two-step nilpotent group 
(cf. Remark \ref{Remark 2.10}).
This gives $[G',G]=1$, hence $G'\subseteq Z:=Z(G)$. Thus $G/Z$ is abelian. 

Moreover, here $Z$ is the scalar group of $\rho$, hence the dimension of $\rho$ is:
$$\rm{dim}(\rho)=\sqrt{[G:Z]}=m$$
Therefore the order of $Z$ is $m_{q_F-1}$ and $Z=\rm{Gal}(L/K)$.

\end{proof}

\begin{rem}[{\bf Special case: $m=2$, hence $p\ne 2$}]

Now if we take $m=2$, hence $p\ne 2$, and choose $\chi_0$ as the above optimal choice, then we will have 
$m_{q_F-1}=2_{q_F-1}=2$-primary factor of the number $q_F-1$, and $\rm{Gal}(L/F)$ is a $2$-group of order 
$4\cdot 2_{q_F-1}$.

 When $q_F\equiv -1\pmod{4}$, $q_F$ is of the form $q_F=4l-1$, where $l\ge 1$. So we can write $q_F-1=2(2l-1)$.
Since $2l-1$ is always odd, therefore when $q_F\equiv-1\pmod{4}$, the order of $\chi_0$ is $2_{q_F-1}=2$. 
Then $\rm{Gal}(L/F)$ will be of order 8 if and only if $q_F\equiv -1\pmod{4}$, i.e., if and only
if $i\not\in F$. And if $q_F\equiv 1\pmod{4}$, then similarly,  we can write $q_F-1=4m$ for some integer $m\ge1$, hence 
$2_{q_F-1}\ge 4$. Therefore when $q_F\equiv 1\pmod{4}$, the order of $\rm{Gal(L/F)}$ will be at least $16$.

\end{rem}

\end{exm}

\subsection{{\bf Artin conductors, Swan conductors, and the dimensions of Heisenberg representations}}

\begin{dfn}[{\bf Artin and Swan conductor}]
 Let $G$ be a finite group and $R(G)$ be the complex representation ring of $G$. For any two representations 
 $\rho_1,\rho_2\in R(G)$ with characters $\chi_1,\chi_2$ respectively, we have the Schur's inner product:
 $$<\rho_1,\rho_2>_G=<\chi_1,\chi_2>_G:=\frac{1}{|G|}\sum_{g\in G}\chi_1(g)\cdot\overline{\chi_2(g)}.$$
 Let $K/F$ be a finite Galois group with Galois
 group $G:=\rm{Gal}(K/F)$. For an element $g\in G$ different from identity $1$, we define the positive integer 
 (cf. \cite{JPS}, Chapter IV, p. 62)
 $$i_G(g):=\rm{inf}\{\nu_K(x-g(x))|\; x\in O_K\}.$$
 By using this non-negative (when $g\ne 1$) integer $i_G(g)$ we define a function $a_G:G\to\bbZ$ as follows:
 \begin{center}
  $a_G(g)=-f_{K/F}\cdot i_G(g)$ when $g\ne 1$, and $a_G(1)=f_{K/F}\sum_{g\ne 1}i_G(g)$.
 \end{center}
Thus from this definition we can see that $\sum_{g\in G}a_G(g)=0$, hence $<a_G, 1_G>=0$. 
It can be proved (cf. \cite{JPS}, p. 99, Theorem 1) that the function $a_G$ is the character of a linear representation of $G$,
and that corresponding linear representation is called the {\bf Artin representation} $A_G$ of $G$.

Similarly, for a nontrivial $g\ne 1\in G$, we define (cf. \cite{VS}, p. 247)
$$s_G(g)=\rm{inf}\{\nu_K(1-g(x)x^{-1})|\;x\in K^\times\},\qquad s_G(1)=-\sum_{g\ne 1}s_G(g).$$
And we can define a function $\rm{sw}_G:G\to\bbZ$ as follows:
$$\rm{sw}_G(g)=-f_{K/F}\cdot s_G(g)$$
It can also be shown that $\rm{sw}_G$ is a character of a linear representation of $G$, and that corresponding representation
is called the {\bf Swan representation} $SW_G$ of $G$.

From \cite{JP}, p. 160 , we have the relation between the Artin and Swan representations (cf. \cite{VS}, p. 248, equation (6.1.9))
\begin{equation}\label{eqn 5.1.22}
 SW_G=A_G+\rm{Ind}_{G_0}^{G}(1)-\rm{Ind}_{\{1\}}^{G}(1),
\end{equation}
$G_0$ is the $0$-th ramification group (i.e., inertia group) of $G$.

Now we are in a position to define the Artin and Swan conductor of a representation $\rho\in R(G)$. The Artin conductor of a 
representation $\rho\in R(G)$ is defined by 
$$a_F(\rho):=<A_G,\rho>_G=<a_G,\chi>_G,$$
where $\chi$ is the character of 
the representation $\rho$. Similarly, for the representation $\rho$, the Swan conductor is:
$$\rm{sw}_F(\rho):=<SW_G,\rho>_G=<\rm{sw}_G,\chi>_G.$$
For more details about Artin and Swan conductor, see Chapter 6 of \cite{VS} and Chapter VI of \cite{JPS}.
\end{dfn}
From equation (\ref{eqn 5.1.22}) we obtain
\begin{equation}\label{eqn 5.1.23}
 a_F(\rho)=\rm{sw}_F(\rho)+\rm{dim}(\rho)-<1,\rho>_{G_0}.
\end{equation}
Moreover, from Corollary of Proposition 4 on p. 101 of \cite{JPS}, for an induced representation 
$\rho:=\rm{Ind}_{\rm{Gal}(K/E)}^{\rm{Gal}(K/F)}(\rho_E)=\rm{Ind}_{E/F}(\rho_E)$, we have
\begin{equation}\label{eqn 5.1.24}
 a_F(\rho)=f_{E/F}\cdot \left( d_{E/F}\cdot \rm{dim}(\rho_E)+\textrm{a}_E(\rho_E)\right).
\end{equation}
We apply this formula (\ref{eqn 5.1.24}) for $\rho_E=\chi_E$ of dimension $1$ and then conversely 
$$a(\chi_E)=\frac{a_F(\rho)}{f_{E/F}}-d_{E/F}.$$
So if we know $a_F(\rho)$ then we can compute $a(\chi_E)$. 


Let $\{G^i\}$, where $i\ge 0,\in\bbQ$ be the ramification subgroups (in the upper numbering) of a local Galois group $G$.
Now let $\rho$ be an irreducible representation of $G$. For this irreducible $\rho$ we define 
$$j(\rho):=\rm{max}\{ i\;|\; \rho|_{G^i}\not\equiv 1\}.$$
Now if $\rho$ is an irreducible representation of $G$, then $\rho|_{I}\not\equiv 1$, where $I=G^0=G_0$ is the inertia subgroup
of $G$. Thus from the definition of $j(\rho)$ we can say, if $\rho$ is irreducible, then we always have 
$j(\rho)\ge 0$, i.e., $\rho$ is nontrivial on the inertia group $G_0$. Then from the definitions of Swan and Artin 
conductors, and equation (\ref{eqn 5.1.23}), when $\rho$ is irreducible, we have the following relations
\begin{equation}\label{eqn 5.1.281}
 \rm{sw}_F(\rho)=\rm{dim}(\rho)\cdot j(\rho),\qquad a_F(\rho)=\rm{dim}(\rho)\cdot (j(\rho)+1).
\end{equation}
From the Theorem of Hasse-Arf (cf. \cite{JPS}, p. 76), if $\rm{dim}(\rho)=1$, i.e., $\rho$ is a character of $G/[G,G]$, 
we can say that $j(\rho)$ must be an integer, then $\rm{sw}_F(\rho)=j(\rho), a_F(\rho)=j(\rho)+1$.
Moreover, by class field theory, $\rho$ corresponds to a linear character $\chi_F$, hence for linear character $\chi_F$, we can write 
$$j(\chi_F):=\rm{max}\{i\;|\;\chi_F|_{U_F^i}\not\equiv1\},$$
because under class field theory (under Artin isomorphism) 
the upper numbering in the filtration of $\rm{Gal}(F_{ab}/F)$ is compatible with the filtration (descending chain) of the group of units 
$U_F$.

From equation (\ref{eqn 5.1.281}),
it is easy to see that for higher dimensional $\rho$, we have $\rm{sw}_F(\rho), a_F(\rho)$ multiples of $\rm{dim}(\rho)$ if and only 
if $j(\rho)$ is an integer.

Now we come to our Heisenberg representations. For each $X\in\widehat{FF^\times}$ we define
\begin{equation}
 j(X):=\begin{cases}
        0 & \text{when $X$ is trivial}\\
        \rm{max}\{i\;|\; X|_{UU^i}\not\equiv 1\} & \text{when $X$ is nontrivial},
       \end{cases}
\end{equation}
where $UU^i\subseteq FF^\times$ is a subgroup which under (\ref{eqn 5.1.2}) corresponds 
$$G_F^i\cap[G_F,G_F]/G_F^i\cap[[G_F,G_F],G_F]\subseteq[G_F,G_F]/[[G_F,G_F],G_F].$$
Let $\rho=\rho(X_\rho,\chi_K)$ be the {\bf minimal conductor} (i.e., a representation with the smallest Artin conductor) 
Heisenberg representation for $X_\rho$ of the absolute Galois group $G_F$. 
From Theorem 3 on p. 125 of \cite{Z5}, we 
have 
\begin{equation}\label{eqn 5.1.26}
 \rm{sw}_F(\rho)=\rm{dim}(\rho)\cdot j(X_\rho)=\sqrt{[F^\times:\rm{Rad}(X_\rho)]}\cdot j(X_\rho).
\end{equation}
Let $\rho_0=\rho_0(X,\chi_0)$ be a minimal representation
corresponding $X$, then all other Heisenberg
representations of dimension $\rm{dim}(\rho)$ are of the form $\rho=\chi_F\otimes \rho_0=(X, (\chi_F\circ N_{K/F})\chi_0)$,
where $\chi_F:F^\times\to \bbC^\times$. Then 
we have (cf. \cite{Z2}, p. 305, equation (5))
\begin{equation}\label{eqn 5.1.27}
 \rm{sw}_F(\rho)=\rm{sw}_F(\chi_F\otimes\rho_0)=\sqrt{[F^\times:\rm{Rad}(X)]}\cdot\rm{max}\{j(\chi_F), j(X)\}.
\end{equation}


For minimal conductor U-isotopic Heisenberg representation we have the following proposition.

\begin{prop}\label{Proposition conductor}
 Let $\rho=\rho(X_\eta,\chi_K)$ be a U-isotropic Heisenberg representation of $G_F$ of minimal conductor. 
 Then we have the following conductor relation
 \begin{center}
  $j(X_\eta)=j(\eta)$, $\rm{sw}_F(\rho)=\rm{dim}(\rho)\cdot j(X_\eta)=\#\eta\cdot j(\eta)$,
  $a_F(\rho)=\rm{sw}_F(\rho)+\rm{dim}(\rho)=\#\eta(j(\eta)+1)=\#\eta\cdot a_F(\eta)$.
 \end{center}
 
\end{prop}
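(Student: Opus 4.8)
The plan is to collapse the entire statement onto the single identity $j(X_\eta)=j(\eta)$, since once this is known the three displayed equalities follow by combining results already in hand. First I would record that by Lemma~\ref{Lemma U-isotropic} the representation $\rho=\rho(X_\eta,\chi_K)$ satisfies $\dim(\rho)=\#\eta=\sqrt{[F^\times:\mathrm{Rad}(X_\eta)]}$. Because $\rho$ is assumed to be of minimal conductor, the Swan formula (\ref{eqn 5.1.26}) applies (rather than the larger-conductor version (\ref{eqn 5.1.27})), and reads $\mathrm{sw}_F(\rho)=\#\eta\cdot j(X_\eta)$. Thus the middle assertion $\mathrm{sw}_F(\rho)=\#\eta\cdot j(\eta)$ is equivalent to $j(X_\eta)=j(\eta)$, and it is this equality that carries all the content. (If $\eta$ is trivial then $\#\eta=1$, $X_\eta$ is trivial, and $\rho$ is a linear character, so all three relations hold vacuously; I would henceforth take $\eta$ nontrivial.)

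The core step is the computation of $j(X_\eta)=\max\{i:X_\eta|_{UU^i}\not\equiv1\}$. Here I would exploit the two structural features of a U-isotropic character: the explicit pairing from (\ref{eqn 5.1.25}), which gives $X_\eta(\varepsilon,\pi_F)=\eta(\varepsilon)$ for $\varepsilon\in U_F$, and the defining vanishing $X_\eta|_{U_F\wedge U_F}\equiv1$. The filtration subgroup $UU^i\subseteq FF^\times$ is, under the commutator isomorphism (\ref{eqn 5.1.2}), the image of those commutators lying in $G_F^i$. For a U-isotropic $X_\eta$ the alternating-square part $U_F\wedge U_F$ contributes nothing, so the value of $X_\eta$ on $UU^i$ is governed entirely by the uniformizer--unit pairs $\varepsilon\wedge\pi_F$ with $\varepsilon\in U_F^i$, on which $X_\eta$ takes the value $\eta(\varepsilon)$. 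I would therefore argue the equivalence $X_\eta|_{UU^i}\not\equiv1\iff\eta|_{U_F^i}\not\equiv1$, whence $j(X_\eta)=\max\{i:\eta|_{U_F^i}\not\equiv1\}=j(\eta)$.

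With the Swan conductor settled the Artin relation is formal. Comparing the minimal-conductor formula (\ref{eqn 5.1.26}) with the general irreducible formula (\ref{eqn 5.1.281}) forces $j(\rho)=j(X_\eta)=j(\eta)$, and then the second half of (\ref{eqn 5.1.281}) yields $a_F(\rho)=\dim(\rho)\,(j(\rho)+1)=\#\eta\,(j(\eta)+1)=\mathrm{sw}_F(\rho)+\dim(\rho)$ (equivalently, (\ref{eqn 5.1.23}) together with the fact that the irreducible ramified $\rho$ contains no trivial inertial component, so $\langle1,\rho\rangle_{G_0}=0$). Finally, viewing $\eta$ as a ramified character of $U_F$, its conductor is $a_F(\eta)=j(\eta)+1$ directly from the definition of $j(\eta)$, and hence $a_F(\rho)=\#\eta\,(j(\eta)+1)=\#\eta\cdot a_F(\eta)$, which is the last asserted equality.

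The step I expect to be the main obstacle is the core computation: pinning down $UU^i$ precisely enough to prove $X_\eta|_{UU^i}\not\equiv1\iff\eta|_{U_F^i}\not\equiv1$. This requires tracking how the upper-numbering ramification filtration $\{G_F^i\}$ transports across Zink's commutator isomorphism (\ref{eqn 5.1.2}), and in particular verifying that for a U-isotropic character the filtration level is detected solely by the pairing of $\pi_F$ against the unit filtration $\{U_F^i\}$ and not by the alternating square $U_F\wedge U_F$ on which $X_\eta$ is identically trivial. Everything else (the dimension count and the passage from the Swan to the Artin conductor) is bookkeeping once this filtration compatibility is in place.
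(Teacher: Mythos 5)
Your proposal follows the paper's own route step for step: reduce the whole statement to the single identity $j(X_\eta)=j(\eta)$, then apply the minimal-conductor Swan formula (\ref{eqn 5.1.26}) and the relation (\ref{eqn 5.1.23}) with $\langle 1,\rho\rangle_{G_0}=0$, plus $a(\eta)=j(\eta)+1$. All of that bookkeeping is correct. The problem is that the one step carrying the content --- the equivalence $X_\eta|_{UU^i}\not\equiv 1\iff\eta|_{U_F^i}\not\equiv 1$ --- is exactly the step you do not prove: you assert that the value of $X_\eta$ on $UU^i$ is governed entirely by the pairs $\varepsilon\wedge\pi_F$ with $\varepsilon\in U_F^i$, and then, in your closing paragraph, concede that establishing this is the main obstacle. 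This is a genuine gap, because the assertion is not automatic from U-isotropy. The subgroup $UU^i\subseteq FF^\times$ is \emph{defined} as the subgroup corresponding under the commutator isomorphism (\ref{eqn 5.1.2}) to $G_F^i\cap[G_F,G_F]$ modulo $G_F^i\cap[[G_F,G_F],G_F]$; nothing in that definition says its elements split into a part lying in $U\wedge U$ and a part of the form $\varepsilon\wedge\pi_F$ with $\varepsilon\in U_F^i$. A priori $UU^i$ could contain classes $u\wedge\pi_F$ modulo $U\wedge U$ with $u\notin U_F^i$, in which case $X_\eta$ could be nontrivial on $UU^i$ even though $\eta|_{U_F^i}\equiv 1$, and the claimed equality of jumps would fail.

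What closes this gap in the paper is a specific external input: by Zink (\cite{Z5}, p.~126, Proposition 4(i) and Proposition 5(ii)), together with the fact $U\wedge U=U^1\wedge U^1$, the injection $U^i\wedge F^\times\subseteq UU^i$ induces an isomorphism $U^i\wedge\langle\pi_F\rangle\cong UU^i/\bigl(UU^i\cap(U\wedge U)\bigr)$ for all $i\ge 0$. Once this structure theorem is available, your argument goes through verbatim: the U-isotropic $X_\eta$ kills $U\wedge U$, so $X_\eta|_{UU^i}$ factors through $U^i\wedge\langle\pi_F\rangle$, where by (\ref{eqn 5.1.25}) it is $\varepsilon\wedge\pi_F\mapsto\eta(\varepsilon)$; hence $X_\eta|_{UU^i}\not\equiv 1$ precisely when $\eta|_{U_F^i}\not\equiv 1$, giving $j(X_\eta)=j(\eta)$, and the Swan and Artin relations follow as you describe. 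So your outline is the same as the paper's, but the pivotal filtration compatibility is not something that can be finished by the soft ``tracking'' argument you sketch; it requires citing (or reproving) Zink's description of the filtration $UU^i$.
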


\begin{proof}
 From \cite{Z5}, on p. 126, Proposition 4(i) and Proposition 5(ii), and $U\wedge U=U^1\wedge U^1$, we see the injection 
$U^i\wedge F^\times\subseteq UU^i$ induces a natural isomorphism 
$$U^i\wedge<\pi_F>\cong UU^{i}/UU^i\cap (U\wedge U)$$
for all $i\ge 0$. 

Now let $j(X_\eta)=n-1$, hence $X_\eta|_{UU^n}=1$ but $X_\eta|_{UU^{n-1}}\ne 1$.
This gives $X_\eta|_{U^n\wedge<\pi_F>}=1$ but $X_\eta|_{U^{n-1}\wedge<\pi_F>}\ne 1$. Now from equation (\ref{eqn 5.1.25})
we can conclude that $\eta(x)=1$ for all $x\in U^n$ but $\eta(x)\ne 1$ for $x\in U^{n-1}$. Hence 
$$j(\eta)=n-1=j(X_\eta).$$
Again from the definition of $j(\chi)$, where $\chi$ is a character of $F^\times$, we can see that 
$j(\chi)=a(\chi)-1$, i.e., $a(\chi)=j(\chi)+1$.

From equation (\ref{eqn 5.1.26}) we obtain:
$$\rm{sw}_F(\rho)=\rm{dim}(\rho)\cdot j(X_\eta)=\#\eta\cdot j(\eta),$$
since $\rm{dim}(\rho)=\#\eta$ and $j(X_\eta)=j(\eta)$. Finally, from  equation (\ref{eqn 5.1.23}) for $\rho$ (here $<1,\rho>_{G_0}=0$),
we have 
\begin{equation}\label{eqn 5.1.28}
 a_F(\rho)=\rm{sw}_F(\rho)+\rm{dim}(\rho)=\#\eta\cdot j(\eta)+\#\eta=\#\eta\cdot (j(\eta)+1)=\#\eta\cdot a_F(\eta).
\end{equation}
\end{proof}

By using the equation (\ref{eqn 5.1.24}) in our Heisenberg setting, we have the following proposition.

\begin{prop}\label{Proposition 5.1.20}
 Let $\rho=\rho(Z,\chi_\rho)=\rho(X,\chi_K)$ be a Heisenberg representation of the absolute Galois group $G_F$ of a field 
 $F/\bbQ_p$ of dimension $m$. Let $E/F$ be any subextension in $K/F$ corresponding to a maximal isotropic subgroup for $X$. Then 
 $$a_F(\rho)=a_F(\rm{Ind}_{E/F}(\chi_E)),\qquad m\cdot a_F(\rho)=a_F(\rm{Ind}_{K/F}(\chi_K)).$$
 As a consequence we have 
 $$a(\chi_K)=e_{K/E}\cdot a(\chi_E)-d_{K/E}.$$
\end{prop}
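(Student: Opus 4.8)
The plan is to exploit the two induction identities that the Heisenberg structure provides, namely $\rho=\mathrm{Ind}_{E/F}(\chi_E)$ for the maximal isotropic base field $E$ (established in the lemma following Lemma \ref{Lemma 5.1.4}) and $m\cdot\rho=\mathrm{Ind}_{K/F}(\chi_K)$ from the dimension formula (\ref{eqn 5.1.5}), together with the additivity of the Artin conductor and the induction formula (\ref{eqn 5.1.24}). First, the equality $a_F(\rho)=a_F(\mathrm{Ind}_{E/F}(\chi_E))$ is immediate, since for a maximal isotropic $E$ the representations $\rho$ and $\mathrm{Ind}_{E/F}(\chi_E)$ are literally the same. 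For the second identity I would invoke (\ref{eqn 5.1.5}), which reads $m\cdot\rho=\mathrm{Ind}_{K/F}(\chi_K)$ because $\dim(\rho)=\sqrt{[F^\times:N]}=m$; since $a_F(-)=\langle A_G,-\rangle_G$ is additive on virtual representations (it is defined through a Schur inner product, linear in the second variable), this yields $a_F(\mathrm{Ind}_{K/F}(\chi_K))=a_F(m\cdot\rho)=m\cdot a_F(\rho)$.

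For the consequence I would apply the induction formula (\ref{eqn 5.1.24}) to the two one-dimensional characters $\chi_E$ and $\chi_K$. As $\dim\chi_E=\dim\chi_K=1$, this gives
\begin{equation*}
 a_F(\rho)=f_{E/F}\bigl(d_{E/F}+a(\chi_E)\bigr),\qquad
 a_F(\mathrm{Ind}_{K/F}(\chi_K))=f_{K/F}\bigl(d_{K/F}+a(\chi_K)\bigr),
\end{equation*}
where the first uses $\rho=\mathrm{Ind}_{E/F}(\chi_E)$ and the second treats $K$ in the role of the intermediate field. Combining these with the already-proved relation $m\cdot a_F(\rho)=a_F(\mathrm{Ind}_{K/F}(\chi_K))$ produces
$m\,f_{E/F}\bigl(d_{E/F}+a(\chi_E)\bigr)=f_{K/F}\bigl(d_{K/F}+a(\chi_K)\bigr)$.

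It remains to feed in the numerical data attached to a maximal isotropic. A maximal isotropic subgroup $H/Z$ for the nondegenerate alternating form on $G/Z\cong F^\times/\mathrm{Rad}(X)$ has index equal to $\sqrt{[G:Z]}=m$, so $[E:F]=[K:E]=m$; multiplicativity of the residue degree gives $f_{K/F}=f_{K/E}\,f_{E/F}$ and the factorization $m=[K:E]=e_{K/E}\,f_{K/E}$; and transitivity of the different gives $d_{K/F}=d_{K/E}+e_{K/E}\,d_{E/F}$ (apply $\nu_K$ to $\mathcal{D}_{K/F}=\mathcal{D}_{K/E}\cdot\mathcal{D}_{E/F}O_K$). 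Dividing the combined relation by $f_{K/E}\,f_{E/F}$ reduces it to $e_{K/E}\bigl(d_{E/F}+a(\chi_E)\bigr)=d_{K/F}+a(\chi_K)$, and substituting the transitivity formula for $d_{K/F}$ cancels the $e_{K/E}\,d_{E/F}$ terms, leaving $a(\chi_K)=e_{K/E}\,a(\chi_E)-d_{K/E}$, as required.

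The only genuinely nontrivial inputs are the index count $[E:F]=[K:E]=m$ for a maximal isotropic subgroup and the transitivity of the different; both are standard, so I do not expect a serious obstacle. The proof is essentially a bookkeeping exercise: once formula (\ref{eqn 5.1.24}) is applied in both the $E/F$ and $K/F$ towers and the ramification/different relations are inserted, the claimed identity drops out by cancellation. If any care is needed, it is in confirming that the various conductors $a(\chi_E)$, $a(\chi_K)$ refer to the characters appearing in the single fixed induction data $(X,\chi_K)$ with $\chi_E\circ N_{K/E}=\chi_K$, so that the formula is independent of the choice of $\chi_E$ among its $\mathrm{Gal}(E/F)$-conjugates.
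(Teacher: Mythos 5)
Your proposal is correct and follows essentially the same route as the paper's own proof: both rest on the identities $\rho=\mathrm{Ind}_{E/F}(\chi_E)$ and $m\cdot\rho=\mathrm{Ind}_{K/F}(\chi_K)$, linearity of $a_F$ on virtual representations, the induced-conductor formula (\ref{eqn 5.1.24}) applied in the $E/F$ and $K/F$ towers, and transitivity of the different $d_{K/F}=d_{K/E}+e_{K/E}d_{E/F}$. The only difference is cosmetic bookkeeping (you divide by $f_{K/E}f_{E/F}$ where the paper substitutes $m\,f_{E/F}=e_{K/E}f_{K/F}$), so the two arguments coincide.
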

\begin{proof}
 We know that $\rho=\rm{Ind}_{E/F}(\chi_E)$ and $m \cdot \rho=\rm{Ind}_{K/F}(\chi_K)$.
 By the definition of Artin conductor we can write 
 $$a_F(\rm{dim}(\rho)\cdot \rho)=\rm{dim}(\rho)\cdot a_F(\rho)=m\cdot a_F(\rm{Ind}_{E/F}(\chi_E)).$$
 Since $K/E/F$ is a tower of Galois extensions with $[K:E]=m=e_{K/E}f_{K/E}$, we have the transitivity relation of 
 different (cf. \cite{JPS}, p. 51,
 Proposition 8)
 $$\mathcal{D}_{K/F}=\mathcal{D}_{K/E}\cdot \mathcal{D}_{E/F}.$$
 Now from the definition of different of a Galois extension, and taking $K$-valuation we obtain:
 \begin{equation}\label{eqn discriminant relation}
  d_{K/F}=d_{K/E}+e_{K/E}\cdot d_{E/F}.
 \end{equation}
 Now by using equation (\ref{eqn 5.1.24}) we have:
 \begin{equation}\label{eqn 44}
  m\cdot a_F(\rm{Ind}_{E/F}(\chi_E))=m\cdot f_{E/F}\left(d_{E/F}+a(\chi_E)\right)=m\cdot f_{E/F}\cdot d_{E/F}+e_{K/E}\cdot f_{K/F}
  \cdot a(\chi_E),
 \end{equation}
and 
\begin{equation}\label{eqn 45}
 a_F(\rm{Ind}_{K/F}(\chi_K))=f_{K/F}\cdot\left(d_{K/F}+a(\chi_K)\right)=f_{K/F}\cdot d_{K/F}+f_{K/F}\cdot a(\chi_K).
\end{equation}
By using equation (\ref{eqn discriminant relation}), from equations (\ref{eqn 44}), (\ref{eqn 45}), we have 
$$a(\chi_K)=e_{K/E}\cdot a(\chi_E)-d_{K/E}$$

\end{proof}

Now by combining Proposition \ref{Proposition 5.1.20} with Proposition \ref{Proposition conductor}, we get the following result.

\begin{lem}\label{Lemma general conductor}
 Let $\rho=\rho(X_\eta,\chi_K)$ be a U-isotopic Heisenberg representation of the absolute Galois group $G_F$ of a non-archimedean
 local field $F$.
 Let $K=K_\eta$ correspond to the radical of $X_\eta$, and let $E_1/F$ be the maximal unramified subextension, and $E/F$
 be any maximal cyclic and totally ramified subextension in $K/F$. Let $m$ denote the order of $\eta$.
 Then $\rho$ is induced by $\chi_{E_1}$ or by 
 $\chi_E$ respectively, and we have 
 \begin{enumerate}
  \item $a_E(\chi_E)=m\cdot a(\eta)-d_{E/F}$,
  \item $a_{E_1}(\chi_{E_1})=a(\eta)$,
  \item and for the character $\chi_K\in\widehat{K^\times}$,
  $$a_K(\chi_K)=m\cdot a(\eta)-d_{K/F}.$$
 \end{enumerate}
Moreover, $a_E(\chi_E)=a_K(\chi_K)$. 
\end{lem}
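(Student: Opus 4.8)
The plan is to read off all three conductor formulas from the single induction identity $a(\chi_E)=a_F(\rho)/f_{E/F}-d_{E/F}$ (the consequence of equation (\ref{eqn 5.1.24}) recorded just after it), once the value $a_F(\rho)=m\cdot a(\eta)$ and the relevant ramification invariants are in hand. First I would collect the ramification data of the extensions involved. By Lemma \ref{Lemma U-isotropic} we have $[K:F]=\#\eta^2=m^2$, and by the arithmetic description preceding the Explicit Lemma \ref{Explicit Lemma} the tower satisfies $f_{K/F}=e_{K/F}=m$. Hence the maximal unramified subextension $E_1/F$ has $f_{E_1/F}=m$ and $d_{E_1/F}=0$, while any maximal cyclic totally ramified $E/F$ has $f_{E/F}=1$ and $e_{E/F}=m$. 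From the multiplicativity $e_{K/F}=e_{K/E}\cdot e_{E/F}$ I would then deduce $e_{K/E}=1$, i.e. $K/E$ is unramified, so that $d_{K/E}=0$; this last fact is the linchpin for the final identity $a_E(\chi_E)=a_K(\chi_K)$.

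Next I would invoke Proposition \ref{Proposition conductor} (the minimal-conductor case, which is what supplies the dependence on $\eta$), giving $a_F(\rho)=\#\eta\cdot a_F(\eta)=m\cdot a(\eta)$. With this, parts (1) and (2) become immediate substitutions into the induction formula: taking the base field $E$ (totally ramified, $f_{E/F}=1$) yields $a_E(\chi_E)=m\cdot a(\eta)-d_{E/F}$, and taking $E_1$ (unramified, $f_{E_1/F}=m$, $d_{E_1/F}=0$) yields $a_{E_1}(\chi_{E_1})=a(\eta)$. For part (3) I would apply the same formula to the full induction $\rm{Ind}_{K/F}(\chi_K)$, where $\chi_K$ has dimension $1$ and the base field is $K$: this gives $a_F(\rm{Ind}_{K/F}(\chi_K))=f_{K/F}\bigl(d_{K/F}+a_K(\chi_K)\bigr)$. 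Combining with $a_F(\rm{Ind}_{K/F}(\chi_K))=m\cdot a_F(\rho)=m^2 a(\eta)$ from Proposition \ref{Proposition 5.1.20} and dividing by $f_{K/F}=m$ produces $a_K(\chi_K)=m\cdot a(\eta)-d_{K/F}$.

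Finally, for the moreover-statement $a_E(\chi_E)=a_K(\chi_K)$, I would use the consequence $a(\chi_K)=e_{K/E}\cdot a(\chi_E)-d_{K/E}$ of Proposition \ref{Proposition 5.1.20}: since $K/E$ is unramified we have $e_{K/E}=1$ and $d_{K/E}=0$, so $a_K(\chi_K)=a_E(\chi_E)$ directly. Equivalently one may compare parts (1) and (3) via the different transitivity $d_{K/F}=d_{K/E}+e_{K/E}\cdot d_{E/F}=d_{E/F}$ (again because $K/E$ is unramified), which matches the two expressions. The whole argument is essentially bookkeeping; the only genuine content is the observation that $K/E$ is unramified, and I would make sure that step — the splitting $m=e_{K/E}\cdot e_{E/F}$ of the ramification index — is clearly justified, since everything else then follows formally from it together with Propositions \ref{Proposition conductor} and \ref{Proposition 5.1.20}.
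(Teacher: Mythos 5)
Your proposal is correct and takes essentially the same route as the paper's proof: compute $a_F(\rho)=m\cdot a(\eta)$ from Proposition \ref{Proposition conductor}, substitute into the induction--conductor formula (\ref{eqn 5.1.24}) with base fields $E$ (totally ramified, $f_{E/F}=1$) and $E_1$ (unramified, $d_{E_1/F}=0$), and get part (3) and $a_E(\chi_E)=a_K(\chi_K)$ from Proposition \ref{Proposition 5.1.20} together with the key observation that $K/E$ is unramified, hence $d_{E/F}=d_{K/F}$. Your handling of part (3) (dividing $a_F(\mathrm{Ind}_{K/F}(\chi_K))=m^2a(\eta)$ by $f_{K/F}=m$) is only a cosmetic variant of the paper's use of the consequence formula $a(\chi_K)=e_{K/E_1}\cdot a(\chi_{E_1})-d_{K/E_1}$.
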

\begin{proof}
Proof of these assertions follows from equation (\ref{eqn 5.1.24}) and Proposition \ref{Proposition conductor}. When 
$\rho=\rm{Ind}_{E/F}(\chi_E)$, where $E/F$ is a maximal cyclic and totally ramified subextension in $K/F$, from equation 
(\ref{eqn 5.1.24}) we have
\begin{align*}
 a_F(\rho)
 &=m\cdot a(\eta)\quad\text{using Proposition $\ref{Proposition conductor}$},\\
 &=f_{E/F}\cdot\left(d_{E/F}\cdot 1+a_E(\chi_E)\right),\quad\text{since $\rho=\rm{Ind}_{E/F}(\chi_E)$}\\
 &=1\cdot\left(d_{E/F}+a_E(\chi_E)\right).
\end{align*}
because $E/F$ is totally ramified, hence $f_{E/F}=1$.  This implies $a_E(\chi_E)=m\cdot a(\eta)-d_{E/F}$.

Similarly, when $\rho=\rm{Ind}_{E_1/F}(\chi_{E_1})$, where $E_1/F$ is the maximal unramified subextension in $K/F$, hence 
$f_{E_1/F}=m$ and $d_{E_1/F}=0$, by using equation (\ref{eqn 5.1.24}) we obtain $a_{E_1}(\chi_{E_1})= a(\eta)$.

Again from Proposition \ref{Proposition 5.1.20} we have 
$$a_K(\chi_K)=m\cdot a(\chi_{E_1})-d_{K/E_1}=m\cdot a(\eta)-d_{K/F}.$$

Finally, since $E/F$ is a maximal cyclic totally ramified implies $K/E$ is unramified and therefore 
$$d_{E/F}=d_{K/F},\quad\text{and hence}\; a_E(\chi_E)=a_K(\chi_K).$$
\end{proof}

\begin{rem}\label{Remark 5.1.22}
 Assume that we are in the dimension $m=\#\eta$ prime to $p$ case. Then from Corollary \ref{Corollary U-isotropic}, $\eta$
must be a character of $U/U^1$ (for $U=U_F$), hence
$$ a(\eta)=1\qquad  a_F(\rho_0) =m.$$
Therefore in this case the minimal conductor of $\rho$ is $m$, hence it is equal to the dimension of $\rho$. 

From the above Lemma \ref{Lemma general conductor}, in this case we have 
$$a_{E_1}(\chi_{E_1})=a(\eta)=1.$$
And $K/F, E/F$ are tamely ramified of ramification exponent $e_{K/F}=m$, hence
$$ a_E(\chi_E) = a_K(\chi_K) = m\cdot a(\eta)-d_{K/F}=m -(e_{K/F}-1)=m-(m-1)=1.$$
Thus we can conclude that in this case all three characters (i.e., $\chi_{E_1},\chi_E$, and $\chi_K$) are of conductor $1$.

In the general case $a_{E_1}(\chi_{E_1}) = a(\eta)$ and
$$a_E(\chi_E)= a_K(\chi_K) = m\cdot a(\eta)-d,$$
where $d=d_{E/F}=d_{K/F}$, conductors will be different.
\end{rem}

In general, if $\rho=\rho_0\otimes\chi_F$, where $\rho_0$ is a finite dimensional minimal conductor representation of $G_F$, and 
$\chi_F\in\widehat{F^\times}$, then we have the following result.

\begin{lem}\label{Lemma 5.1.23}
 Let $\rho_0$ be a finite dimensional representation of $G_F$ of minimal conductor.
 Then we have 
 \begin{equation}
  a_F(\rho)=\rm{dim}(\rho_0)\cdot a_F(\chi_F),
 \end{equation}
where $\rho=\rho_0\otimes\chi_F=\rho(X_\eta,(\chi_F\circ N_{K/F})\chi_0)$ and $\chi_F\in\widehat{F^\times}$ with 
$a(\chi_F)>\frac{a(\rho_0)}{\rm{dim}(\rho)}$.
\end{lem}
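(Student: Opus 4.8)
The plan is to reduce the statement to the two conductor identities recorded above: the Swan-conductor formula for a twisted Heisenberg representation, equation \eqref{eqn 5.1.27}, together with the Artin--Swan relation for irreducible representations, equation \eqref{eqn 5.1.281}.

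First I would record the two structural facts about $\rho=\rho_0\otimes\chi_F$. Tensoring an irreducible Heisenberg representation by a linear character yields again an irreducible Heisenberg representation (commutators are still sent to the same scalars, since the character contributions cancel in any commutator), so $\rho$ is irreducible and $\rm{dim}(\rho)=\rm{dim}(\rho_0)$; moreover $\rho$ is nontrivial on the inertia group, so the clean relation \eqref{eqn 5.1.281} applies to it and the inner-product term in \eqref{eqn 5.1.23} contributes nothing.

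Next I would unwind the hypothesis. Because $\rho_0$ has minimal conductor, \eqref{eqn 5.1.26} gives $\rm{sw}_F(\rho_0)=\rm{dim}(\rho_0)\cdot j(X)$, and then \eqref{eqn 5.1.281} gives $a_F(\rho_0)=\rm{dim}(\rho_0)\bigl(j(X)+1\bigr)$. Dividing by $\rm{dim}(\rho)=\rm{dim}(\rho_0)$ shows that the assumption $a(\chi_F)>a(\rho_0)/\rm{dim}(\rho)$ is precisely $a(\chi_F)>j(X)+1$. Since $j(\chi_F)=a(\chi_F)-1$ for the linear character $\chi_F$, this is exactly the strict inequality
\[
 j(\chi_F)>j(X).
\]

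Finally I would substitute. The strict inequality forces $\rm{max}\{j(\chi_F),j(X)\}=j(\chi_F)$, so \eqref{eqn 5.1.27} yields $\rm{sw}_F(\rho)=\rm{dim}(\rho)\cdot j(\chi_F)$. Applying \eqref{eqn 5.1.281} to the irreducible $\rho$ then gives
\[
 a_F(\rho)=\rm{sw}_F(\rho)+\rm{dim}(\rho)=\rm{dim}(\rho)\bigl(j(\chi_F)+1\bigr)=\rm{dim}(\rho)\cdot a(\chi_F)=\rm{dim}(\rho_0)\cdot a_F(\chi_F),
\]
which is the assertion. The one point that needs care is the bookkeeping in the third paragraph: I must verify that the hypothesis produces the \emph{strict} inequality $j(\chi_F)>j(X)$ rather than merely $\geq$, since it is precisely this strictness that makes the maximum in \eqref{eqn 5.1.27} select the twisting character and thereby lets $\chi_F$ govern both the Swan and the Artin conductor of $\rho$. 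Everything else is a direct substitution into the conductor formulas established earlier.
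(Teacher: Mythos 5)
Your proposal is correct and follows essentially the same route as the paper's own proof: both reduce the claim to the twisted Swan-conductor formula \eqref{eqn 5.1.27} combined with the Artin--Swan relation \eqref{eqn 5.1.281}, and both use the hypothesis $a(\chi_F)>a(\rho_0)/\rm{dim}(\rho_0)=1+j(\rho_0)$ to force the maximum to be $j(\chi_F)$. The paper merely compresses these steps into a single display, whereas you spell out the intermediate identity $j(\chi_F)>j(X)$ explicitly; the content is the same.
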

\begin{proof}
From equation (\ref{eqn 5.1.281}) we have $a_F(\rho_0)=\rm{dim}(\rho_0)\cdot (1+j(\rho_0))$.
By the given condition $\rho_0$ is of minimal conductor. So for representation $\rho=\rho_0\otimes\chi_F$, we have 
\begin{align*}
 a_F(\rho)
 &=a_F(\rho_0\otimes\chi_F)=\rm{dim}(\rho_0)\cdot\left(1+\rm{max}\{j(\rho_0),j(\chi_F)\}\right)\\
 &=\rm{dim}(\rho_0)\cdot\rm{max}\{1+j(\chi_F), 1+j(\rho_0)\}\\
 &=\rm{dim}(\rho_0)\cdot\rm{max}\{a(\chi_F), 1+j(\rho_0)\}\\
 &=\rm{dim}(\rho_0)\cdot a_F(\chi_F),
\end{align*}
because by the given condition 
$$a(\chi_F)>\frac{a(\rho_0)}{\rm{dim}(\rho_0)}=\frac{\rm{dim}(\rho_0)\cdot(1+j(\rho_0))}{\rm{dim}(\rho_0)}=1+j(\rho_0).$$

\end{proof}

\begin{prop}\label{Proposition 5.1.23}
 Let $\rho=\rho(X,\chi_K)$ be a Heisenberg representation dimension $m$ of the absolute Galois group $G_F$ of a 
 non-archimedean local field $F$.
 Then $m| a_F(\rho)$ if and only if:\\
$X$ is $U$-isotropic, or (if $X$ is not $U$-isotropic) $a_F(\rho)$ is with respect to $X$ not the minimal conductor.
\end{prop}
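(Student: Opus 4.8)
The plan is to convert the divisibility assertion into a statement about the single break $j(\rho)$. Since $\rho$ is an irreducible Heisenberg representation of dimension $m$, equation (\ref{eqn 5.1.281}) gives $a_F(\rho)=m\cdot(j(\rho)+1)$, so that $m\mid a_F(\rho)$ holds precisely when $j(\rho)\in\bbZ$. The proposition is therefore equivalent to showing that $j(\rho)$ is an integer in exactly the two listed situations, and fails to be an integer exactly when $X$ is not $U$-isotropic and $\rho$ carries the minimal conductor attached to $X$.

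First I would dispose of the non-minimal case. If $\rho=\rho_0\otimes\chi_F$ is not of minimal conductor for $X$, then by definition the twisting character satisfies $j(\chi_F)>j(X)$, so by equation (\ref{eqn 5.1.27}) we get $j(\rho)=\rm{max}\{j(\chi_F),j(X)\}=j(\chi_F)$; equivalently Lemma \ref{Lemma 5.1.23} yields $a_F(\rho)=\rm{dim}(\rho_0)\cdot a_F(\chi_F)=m\cdot a(\chi_F)$. As $\chi_F$ is an honest character of $F^\times$, the Hasse--Arf theorem forces $j(\chi_F)=a(\chi_F)-1\in\bbZ$, whence $m\mid a_F(\rho)$. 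This settles every non-minimal $\rho$, regardless of whether $X$ is $U$-isotropic.

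Next I treat the minimal-conductor case, where $j(\rho)=j(X)$ by equation (\ref{eqn 5.1.26}). If $X=X_\eta$ is $U$-isotropic, Proposition \ref{Proposition conductor} gives $j(X_\eta)=j(\eta)$ and $a_F(\rho)=\#\eta\cdot a_F(\eta)=m\cdot a(\eta)$, an integer multiple of $m$; here $j(\eta)\in\bbZ$ because $\eta$ is a character of the units $U_F$, whose filtration $\{U_F^i\}$ is indexed by integers. There remains the single case that should produce non-divisibility: $X$ not $U$-isotropic and $\rho$ minimal, for which I must show $j(X)\notin\bbZ$.

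For this crux I would first observe that non-$U$-isotropy is genuinely a wild phenomenon: since $U_F/U_F^1$ is cyclic its alternating square vanishes, so $X|_{U\wedge U}\neq 1$ can only come from the wild part, i.e. $X$ is nontrivial on $U_F^1\wedge U_F$; by Corollary \ref{Corollary U-isotropic} this also forces $p\mid m$. Fixing a maximal isotropic $E/F$, which by Lemma \ref{Lemma U-equivalent} is necessarily ramified ($e_{E/F}>1$), we have $\rho=\rm{Ind}_{E/F}(\chi_E)$, and the induced-conductor formula (\ref{eqn 5.1.24}) together with $e_{E/F}f_{E/F}=m$ gives
\begin{equation*}
 j(X)+1=\frac{a_F(\rho)}{m}=\frac{d_{E/F}+a(\chi_E)}{e_{E/F}}.
\end{equation*}
Thus $j(X)\in\bbZ$ would be equivalent to $e_{E/F}\mid(d_{E/F}+a(\chi_E))$, and the heart of the argument is to show this divisibility fails for the minimal $\chi_E$. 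I would do this through the break description on $\{UU^i\}$: the isomorphism $U^i\wedge\langle\pi_F\rangle\cong UU^i/(UU^i\cap(U\wedge U))$ from the proof of Proposition \ref{Proposition conductor} separates the vanishing of $X|_{UU^i}$ into the integral $\langle\pi_F\rangle$-break $j(\eta)$ and the $U\wedge U$-break, the latter governed by the wild pairing on $U_F^1$. The main obstacle, and the only genuinely non-formal step, is to prove that this wild commutator break lands strictly between two consecutive integers; this reflects precisely the fact that Hasse--Arf integrality holds on the abelianisation $G_F^{ab}$ but not on the commutator quotient $[G_F,G_F]/[[G_F,G_F],G_F]$ on which $X$ lives. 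I expect to extract the exact fractional value from the filtration analysis underlying (\ref{eqn 5.1.26}), and thereby to conclude $e_{E/F}\nmid(d_{E/F}+a(\chi_E))$, hence $j(X)\notin\bbZ$ and $m\nmid a_F(\rho)$, completing the equivalence.
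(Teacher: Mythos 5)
Your reduction of the divisibility statement to the integrality of the break $j(\rho)$, and your handling of the two easy cases, agree with the paper's own proof: the non-minimal case follows from Lemma \ref{Lemma 5.1.23} (equivalently, $j(\rho)=j(\chi_F)$ is an integer by Hasse--Arf applied to the twisting character), and the minimal $U$-isotropic case follows from Proposition \ref{Proposition conductor}. The problem is the remaining case, which you yourself single out as the crux: $X$ not $U$-isotropic and $\rho$ of minimal conductor, where one must show $j(X)\notin\bbZ$. Here your proposal stops being a proof. You write that you ``expect to extract the exact fractional value from the filtration analysis underlying (\ref{eqn 5.1.26})'', but equation (\ref{eqn 5.1.26}) is only the identity $\mathrm{sw}_F(\rho)=\mathrm{dim}(\rho)\cdot j(X_\rho)$; it carries no information about where the jumps of the relevant filtration sit. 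The non-integrality is a genuine theorem about the ramification filtration on the second central step $[G_F,G_F]/[[G_F,G_F],G_F]$, and it is precisely what the paper imports from Zink: by Proposition 5 on p.~126 of \cite{Z5} one has the decomposition $UU^i=(UU^i\cap U^1\wedge U^1)\times(U^i\wedge\langle\pi_F\rangle)$, and all jumps $v>1$ of the filtration $\{UU^i\cap(U^1\wedge U^1)\}$ are \emph{not} integers. Since $X$ not $U$-isotropic means $X|_{U^1\wedge U^1}\not\equiv 1$, the break $j(X)$ is such a jump, hence non-integral, and the minimal conductor fails to be a multiple of $m$.

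So the gap is concrete: you correctly identify that the proposition hinges on a failure of Hasse--Arf integrality on the commutator quotient, but you provide no mechanism for proving that failure. Your reformulation $e_{E/F}\nmid(d_{E/F}+a(\chi_E))$ is merely a restatement of $j(X)\notin\bbZ$ and yields no new leverage; the formal conductor identities you cite (induction formula (\ref{eqn 5.1.24}), equations (\ref{eqn 5.1.26}) and (\ref{eqn 5.1.27})) are equally consistent with integral jumps, so nothing can be ``extracted'' from them alone. Without invoking (or reproving) Zink's computation of the jumps of $\{UU^i\cap(U^1\wedge U^1)\}$, the direction ``minimal and not $U$-isotropic $\Rightarrow m\nmid a_F(\rho)$'' remains unproved. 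The rest of your write-up --- the equivalence $m\mid a_F(\rho)\Leftrightarrow j(\rho)\in\bbZ$, the observation that non-$U$-isotropy forces $p\mid m$, and that any maximal isotropic $E/F$ is then necessarily ramified --- is correct and consistent with the paper.
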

\begin{proof}
From the above Lemma \ref{Lemma 5.1.23} we know that if $\rho$ is not minimal, then $a_F(\rho)$ is always a multiple of the 
dimension $m$. So now we just have to check for minimal conductors. In the U-isotropic case the minimal conductor is multiple
of the dimension (cf. Proposition \ref{Proposition conductor}). 

Finally, suppose that $X$ is not U-isotropic, i.e., $X|_{U\wedge U}=X|_{U^1\wedge U^1}\not\equiv1$, because 
$U\wedge U=U^1\wedge U^1$ (see the Remark on p. 126 of \cite{Z5}). We also know that 
$UU^i=(UU^i\cap U^1\wedge U^1)\times(U^i\wedge<\pi_F>)$ (cf. \cite{Z5}, p. 126, Proposition 5(ii)). 
In Proposition 5 of \cite{Z5}, we observe that all the jumps $v$ in the filtration $\{UU^i\cap (U^1\wedge U^1)\}, i\in\bbR_{+}$
are not {\bf integers with $v>1$}. This shows that $j(X)$ is also not an integer, hence $a_F(\rho_0)$ is not 
multiple of the dimension. This implies the conductor $a_F(\rho)$ is not minimal.

\end{proof}

Let $\rho=\rho(X,\chi_K)$ be a Heisenberg representation of the absolute Galois group $G_F$. Then from equation 
(\ref{eqn dimension formula}), we have
$$\rm{dim}(\rho)=\sqrt{[K:F]}=\sqrt{[F^\times:\cN_{K/F}]},$$
when $\cN_{K/F}=\rm{Rad}(X)$.

\begin{lem}\label{Lemma dimension equivalent}
 Let $\rho=(Z_\rho,\chi)=\rho(X_\rho,\chi)$ be a Heisenberg representation of 
 the absolute Galois group $G_F$ of a non-archimedean local field 
 $F/\bbQ_p$. Then following are equivalent:
 \begin{enumerate}
  \item $\rm{dim}(\rho)$ is prime to $p$.
  \item $\rm{dim}(\rho)$ is a divisor of $q_F-1$.
  \item The alternating character $X_\rho$ is $U$-isotropic and $X_\rho=X_\eta$ for a character $\eta$ of 
  $U_F/U_F^1$.
 \end{enumerate}
\end{lem}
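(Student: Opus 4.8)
The plan is to establish the three equivalences by running the cycle $(1)\Rightarrow(3)\Rightarrow(2)\Rightarrow(1)$, using the dimension formula $\dim(\rho)=\sqrt{[F^\times:\mathrm{Rad}(X_\rho)]}$ from (\ref{eqn dimension formula}) together with the classification of U-isotropic alternating characters in Lemma \ref{Lemma U-isotropic}. Two of the three arrows are essentially formal. Once I know $X_\rho=X_\eta$ for a character $\eta$ of $U_F/U_F^1$, Lemma \ref{Lemma U-isotropic} gives $\dim(\rho)=\#\eta$, and since $\eta$ lives on the cyclic group $U_F/U_F^1$ of order $q_F-1$ its order divides $q_F-1$; this is $(3)\Rightarrow(2)$. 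For $(2)\Rightarrow(1)$ I simply use $q_F=p^s$, so that $q_F-1$ is prime to $p$ and hence so is every divisor of it.

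The substance lies in $(1)\Rightarrow(3)$. Suppose $m:=\dim(\rho)$ is prime to $p$, so by the dimension formula $[F^\times:\mathrm{Rad}(X_\rho)]=m^2$ is also prime to $p$. First I would show $U_F^1\subseteq\mathrm{Rad}(X_\rho)$: the image of the pro-$p$ group $U_F^1$ in the finite quotient $F^\times/\mathrm{Rad}(X_\rho)$ of order $m^2$ is simultaneously a $p$-group and a subgroup of a group of order prime to $p$, hence trivial. Next I would verify that $X_\rho$ is U-isotropic by using the decomposition $U_F\cong\mu\times U_F^1$, where $\mu$ is the cyclic group of $(q_F-1)$-st roots of unity. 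Writing $\varepsilon_i=\zeta_i u_i$ with $\zeta_i\in\mu$ and $u_i\in U_F^1$, the containment $U_F^1\subseteq\mathrm{Rad}(X_\rho)$ kills the $u_i$-contributions, reducing $X_\rho(\varepsilon_1,\varepsilon_2)$ to $X_\rho(\zeta_1,\zeta_2)$; and an alternating character on the \emph{cyclic} group $\mu$ is automatically trivial, since $X_\rho(\zeta^a,\zeta^b)=X_\rho(\zeta,\zeta)^{ab}=1$. Hence $X_\rho|_{U_F\wedge U_F}=1$, so Lemma \ref{Lemma U-isotropic} furnishes $X_\rho=X_\eta$ with $\eta=\eta_{X_\rho}$. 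Finally $\eta(\varepsilon)=X_\rho(\varepsilon,\pi_F)=1$ for $\varepsilon\in U_F^1$ shows $\eta$ factors through $U_F/U_F^1$, which is exactly (3).

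The main obstacle is precisely this U-isotropy step: knowing that $\mathrm{Rad}(X_\rho)$ contains $U_F^1$ controls only the radical directions and does not by itself give U-isotropy, so one must additionally exploit that the remaining unit part $U_F/U_F^1\cong\mu$ is cyclic, leaving the alternating form no room to be nontrivial there. This is where the prime-to-$p$ hypothesis is genuinely used, through the vanishing of the pro-$p$ image. Everything else then follows mechanically, and the statement is the precise bookkeeping form of Corollary \ref{Corollary U-isotropic}.
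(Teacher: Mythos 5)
Your proof is correct, and it is actually more complete than the paper's own. The paper disposes of every nontrivial implication by citing Corollary \ref{Corollary U-isotropic}, and that corollary's own proof is the one-line assertion that the claim is ``clear from Lemma \ref{Lemma U-isotropic} and the fact $|U_F/U_F^1|=q_F-1$''; in particular the substantive direction $(1)\Rightarrow(3)$ is never argued in detail anywhere in the paper. Your cycle $(1)\Rightarrow(3)\Rightarrow(2)\Rightarrow(1)$ supplies exactly the missing content: the image of the pro-$p$ group $U_F^1$ in the finite quotient $F^\times/\mathrm{Rad}(X_\rho)$, whose order $m^2$ is prime to $p$, must be trivial, so $U_F^1\subseteq\mathrm{Rad}(X_\rho)$; then the decomposition $U_F\cong\mu\times U_F^1$ together with the fact that an alternating character on a cyclic group vanishes identically (since $X(\zeta^a,\zeta^b)=X(\zeta,\zeta)^{ab}=1$) forces $X_\rho$ to be trivial on $U_F\wedge U_F$, i.e.\ U-isotropic, after which Lemma \ref{Lemma U-isotropic} gives $X_\rho=X_\eta$ with $\eta=\eta_{X_\rho}$ trivial on $U_F^1$. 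Both routes rest on the same two pillars, the dimension formula (\ref{eqn dimension formula}) and the classification in Lemma \ref{Lemma U-isotropic}, so conceptually you are on the paper's track; what your version buys is a self-contained, verifiable proof of the one implication the paper leaves to a citation, at the cost of a longer write-up. The remaining arrows (the order of a character of the cyclic group $U_F/U_F^1$ divides $q_F-1$; every divisor of $q_F-1=p^s-1$ is prime to $p$) are handled identically in both.
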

\begin{proof}
 From Corollary \ref{Corollary U-isotropic} we know that all Heisenberg representations of dimensions prime to $p$, are 
 U-isotropic representations of the form $\rho=\rho(X_\eta,\chi)$, where $\eta:U_F/U_F^1\to\bbC^\times$, and the dimensions 
 $\rm{dim}(\rho)=\#\eta$.
 
 Thus if $\rm{dim}(\rho)$ is prime to $p$, then $\rm{dim}(\rho)=\#\eta$ is a divisor of $q_F-1$. And if $\rm{dim}(\rho)$
 is a divisor of $q_F-1$, then $gcd(p,\rm{dim}(\rho))=1$. Then from Corollary \ref{Corollary U-isotropic}, the alternating 
 character $X_\rho$ is U-isotropic and $X_\rho=X_\eta$ for a character $\eta\in\widehat{U_F/U_F^1}$.
 
 Finally, if $\rho=\rho(X_\rho,\chi_K)=\rho(X_\rho,\chi_K)$ be a Heisenberg representation of $G_F$
for a character $\eta$ of $U_F/U_F^1$, then from Corollary \ref{Corollary U-isotropic}, we know that 
$\rm{dim}(\rho)$ is prime to $p$.
\end{proof}

For giving invariant formula of $W(\rho)$, we need to know the explicit dimension formula of $\rho$.
In the following theorem we give the general dimension formula of a Heisenberg representation.
\begin{thm}[{\bf Dimension}]\label{Dimension Theorem}
Let $F/\bbQ_p$ be a local field and $G_F$ be the absolute Galois group of $F$. If $\rho$ is a Heisenberg representation of 
$G_F$, then $\rm{dim}(\rho)=p^n\cdot d'$, where $n\ge 0$ is an integer and where the prime to $p$ factor $d'$ must divide $q_F-1$.
\end{thm}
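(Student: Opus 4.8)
The plan is to read off $\dim(\rho)$ from the radical of the alternating character and then control its primary parts using the elementary structure of $F^\times$. By the dimension formula (\ref{eqn dimension formula}) we have $\dim(\rho)^2 = [F^\times : N]$, where $N = \mathrm{Rad}(X)$ is the radical of $X = X_\rho$; in particular the finite abelian group $A := F^\times/N$ has square order $m^2$ with $m = \dim(\rho)$, and it carries the nondegenerate alternating pairing induced by $X$ (nondegeneracy being exactly the statement that $N$ is the radical). Writing $m = p^n d'$ with $\gcd(d',p)=1$, it suffices to prove, for each prime $\ell \neq p$, that the $\ell$-part of $m$ divides the $\ell$-part of $q_F - 1$; since $q_F-1$ is prime to $p$, assembling these local statements over all $\ell \neq p$ yields $d' \mid (q_F-1)$.

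First I would record the structure of $F^\times$. Fixing a uniformizer and the Teichm\"uller lift, $F^\times \cong \langle \pi_F\rangle \times \mu_{q_F-1} \times U_F^1$, where $\langle\pi_F\rangle \cong \bbZ$, the group $\mu_{q_F-1}\cong U_F/U_F^1$ is cyclic of order $q_F-1$, and $U_F^1 = 1+P_F$ is a pro-$p$ group. Passing to profinite completions and fixing a prime $\ell \neq p$, the pro-$p$ factor $U_F^1$ contributes nothing to the $\ell$-part, so the $\ell$-Sylow subgroup of the completion of $F^\times$ is $\bbZ_\ell \times \bbZ/\ell^{c}$ with $c := v_\ell(q_F-1)$. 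The essential feature is that this group is topologically generated by two elements, one of them (coming from the units $\mu_{q_F-1}$) of order exactly $\ell^{c}$, the other (coming from $\pi_F$) procyclic of unbounded order.

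Next I would transport this to $A$. Because $A$ is finite, its primary components are mutually orthogonal for the pairing (their orders are coprime), so the restriction of $X$ makes the $\ell$-part $A_\ell$ a finite abelian $\ell$-group with a nondegenerate alternating pairing. As $A_\ell$ is a quotient of $\bbZ_\ell \times \bbZ/\ell^{c}$ it is generated by two elements, hence of the form $\bbZ/\ell^{a}\times\bbZ/\ell^{b}$; a short computation shows that a nondegenerate \emph{alternating} pairing on such a group forces its two invariant factors to coincide, so $A_\ell \cong \bbZ/\ell^{e}\times\bbZ/\ell^{e}$ and the $\ell$-part of $m$ equals $\ell^{e}$. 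Finally, dualizing, $A_\ell$ embeds into $\bbZ_\ell \times \bbZ/\ell^{c}$, so its smallest invariant factor $\ell^{e}$ divides $\ell^{c}$; thus $e \le c = v_\ell(q_F-1)$, i.e. the $\ell$-part of $m$ divides the $\ell$-part of $q_F-1$, as required.

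The main obstacle is the last structural point: reconciling the two-generation constraint with nondegeneracy. The uniformizer direction $\bbZ_\ell$ is $\ell$-divisible and a priori could push $e$ arbitrarily high, so one must use that the alternating pairing pins the exponent of $A_\ell$ to that of its bounded, unit-coming generator of order $\ell^{c}$. Making this precise---that a nondegenerate alternating $\ell$-group which is a two-generated quotient of $\bbZ/\ell^{a}\times\bbZ/\ell^{c}$ must be $(\bbZ/\ell^{e})^2$ with $e\le c$---is the crux, and it is here that the tame bound $q_F-1$ enters; everything else is bookkeeping with the primary decomposition and the dimension formula (\ref{eqn dimension formula}). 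The degenerate case $m=1$ (linear characters) is trivially consistent, since $1 \mid q_F-1$.
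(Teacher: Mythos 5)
Your proposal is correct in substance and takes a genuinely different route from the paper. The paper's proof is group-theoretic: it regards $\rho$ as a representation of the two-step nilpotent quotient $G=G_F/[[G_F,G_F],G_F]$, splits $G=G_p\times G_{p'}$ into its Sylow $p$-part and prime-to-$p$ part, factors $\rho=\rho_p\otimes\rho_{p'}$, observes that $\dim(\rho_p)$ is a $p$-power, and then quotes Lemma \ref{Lemma dimension equivalent} (which rests on the U-isotropic classification, Lemma \ref{Lemma U-isotropic} and Corollary \ref{Corollary U-isotropic}) to get $\dim(\rho_{p'})\mid q_F-1$. You instead stay entirely on the abelian side of class field theory: the dimension formula (\ref{eqn dimension formula}), primary decomposition of the finite group $A=F^\times/\mathrm{Rad}(X)$ with its nondegenerate alternating form, two-generation of each $A_\ell$ ($\ell\ne p$) coming from $F^\times\cong\langle\pi_F\rangle\times\mu_{q_F-1}\times U_F^1$, the symplectic fact that a two-generated finite abelian $\ell$-group with nondegenerate alternating pairing must be $(\bbZ/\ell^{e})^2$, and a bound $e\le v_\ell(q_F-1)$. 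Both arguments ultimately rest on the same arithmetic inputs ($U_F^1$ is pro-$p$, $U_F/U_F^1$ is cyclic of order $q_F-1$), but the paper's route reuses machinery ($X=X_\eta$, U-isotropy) needed elsewhere in the article, while yours avoids the Sylow decomposition and tensor factorization of irreducibles of profinite nilpotent groups altogether and proves the divisibility uniformly, prime by prime; in effect you re-prove the relevant half of Lemma \ref{Lemma dimension equivalent} locally at each $\ell$.

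One step is misstated, though it is repairable and you yourself flagged it as the crux. You claim that "dualizing, $A_\ell$ embeds into $\bbZ_\ell\times\bbZ/\ell^{c}$". That cannot be right: $\bbZ_\ell$ is torsion-free, so every finite subgroup of $\bbZ_\ell\times\bbZ/\ell^{c}$ is cyclic of order dividing $\ell^{c}$, and your embedding would force $A_\ell$ to be cyclic, i.e. $e=0$. (The Pontryagin dual of $\bbZ_\ell$ is $\bbQ_\ell/\bbZ_\ell$, not $\bbZ_\ell$, so correct dualization gives $A_\ell\hookrightarrow\bbQ_\ell/\bbZ_\ell\times\bbZ/\ell^{c}$.) The cleanest repair avoids duality: let $C\subseteq A_\ell$ be the image of $\mu_{q_F-1}$, equivalently of its $\ell$-primary part, so $C$ is cyclic of order dividing $\ell^{c}$; then $A_\ell/C$ is generated by the image of $\pi_F$, hence cyclic, hence of order at most the exponent $\ell^{e}$ of $A_\ell\cong(\bbZ/\ell^{e})^2$. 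From $|A_\ell|=\ell^{2e}$ we get $|C|\ge\ell^{e}$, so $\ell^{e}\le\ell^{c}$, which is exactly $e\le v_\ell(q_F-1)$. Likewise, "$\bbZ_\ell$ is $\ell$-divisible" should read "$\bbZ_\ell$ has arbitrarily large cyclic quotients"; with these corrections your argument is complete.
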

\begin{proof}
 By the definition of Heisenberg representation $\rho$ we have the relation 
 $$[[G_F,G_F],G_F]\subseteq\rm{Ker}(\rho).$$
 Then we can consider $\rho$ as a representation of $G:=G_F/[[G_F,G_F],G_F]$. Since 
 $[x,g]\in [[G_F,G_F],G_F]$ for all $x\in [G_F,G_F]$ and $g\in G_F$, we have $[G,G]=[G_F,G_F]/[[G_F,G_F],G_F]\subseteq Z(G)$,
 hence $G$ is a two-step nilpotent group.
 
 We know that each nilpotent group is isomorphic to the direct product of its Sylow subgroups. Therefore we can write 
 $$G=G_p\times G_{p'},$$
 where $G_p$ is the Sylow $p$-subgroup, and $G_{p'}$ is the direct product of all other Sylow subgroups. Therefore each irreducible
 representation $\rho$ has the form $\rho=\rho_{p}\otimes\rho_{p'}$, where $\rho_{p}$ and $\rho_{p'}$ are irreducible representations of 
 $G_p$ and $G_{p'}$ respectively. 
 
 We also know that finite $p$-groups are nilpotent groups, and direct product of a finite number of 
 nilpotent groups is again a nilpotent group.
 So $G_p$ and $G_{p'}$ are both two-step nilpotent group because $G$ is a two-step nilpotent group. Therefore the representations
 $\rho_p$ and $\rho_{p'}$ are both Heisenberg representations of $G_p$ and $G_{p'}$ respectively.
 
 Now to prove our assertion, we have to show that $\rm{dim}(\rho_p)$ can be an arbitrary power of $p$, whereas 
 $\rm{dim}(\rho_{p'})$ must divide $q_F-1$. Since
 $\rho_p$ is an {\bf irreducible} representation of $p$-group $G_p$, so the dimension of $\rho_p$ is some $p$-power.
 
 Again from the construction of $\rho_{p'}$ we can say that $\rm{dim}(\rho_{p'})$ is {\bf prime} to $p$. 
 Then from Lemma \ref{Lemma dimension equivalent} $\rm{dim}(\rho_{p'})$ is a divisor of $q_F-1$.

This completes the proof.

\end{proof}

\begin{rem}\label{Remark 5.1.3}
{\bf (1).}
Let $V_F$ be the wild ramification subgroup of $G_F$.
 We can show that $\rho|_{V_F}$ is irreducible if and only if $Z_\rho=G_K\subset G_F$
 corresponds to an abelian extension $K/F$ which is totally ramified and wildly 
 ramified\footnote{Group theoretically, if $\rho|_{V_F}=\rm{Ind}_{H}^{G_F}(\chi_H)|_{V_F}$ is irreducible, then from 
 Section 7.4 of \cite{JP},
we can say $G_F=H\cdot V_F$. Here $H=G_L$, where $L$ is a certain extension of $F$, and $V_F=G_{F_{mt}}$ where $F_{mt}/F$ is the maximal 
tame extension of $F$. Therefore $G_F=H\cdot V_F$ is equivalent to $F=L\cap F_{mt}$ that means the extension $L/F$ must be totally 
ramified and wildly ramified, and $[G_F:H]=[L:F]=|V_F|$.
We know that the wild ramification subgroup $V_F$ is a pro-p-group (cf. \cite{FV}, p. 106). Then 
 $\rm{dim}(\rho)$ is a power of $p$.} (cf. \cite{Z2}, p. 305). If $N:=N_{K/F}(K^\times)$ is the subgroup
 of norms, then this means that $N\cdot U_{F}^{1}=F^\times$, in other words,
 $$F^\times/N=N\cdot U_{F}^{1}/N=U_{F}^{1}/N\cap U_{F}^{1},$$
 where $N$ can be also considered as the radical of $X_\rho$. So we can consider the alternating character $X_\rho$ on the principal
 units $U_{F}^{1}\subset F^\times$. Then 
 $$\rm{dim}(\rho)=\sqrt{[F^\times:N]}=\sqrt{[U_F^1: N\cap U_F^1]},$$
 is a power of $p$, because $U_F^1$ is a pro-p-group.

 Here we observe: If $\rho=\rho(X,\chi_K)$ with $\rho|_{V_F}$ stays irreducible, then
 $\rm{dim}(\rho)=p^n$, $n\ge 1$ and  
 $K/F$ is a totally and {\bf wildly} ramified. But there is 
 a {\bf big} class of Heisenberg representations $\rho$ such that $\rm{dim}(\rho)=p^n$ is a $p$-power, but which are not 
 wild representations (see the Definition \ref{Definition U-isotropic} of U-isotropic).\\
 {\bf (2).}
Let $\rho=\rho(X,\chi_K)$ be a Heisenberg representation of the absolute Galois group $G_F$ of dimension $d>1$ which is prime 
to $p$. Then from above Lemma \ref{Lemma dimension equivalent}, we have  $d|q_F-1$. For this representation $\rho$, here 
$K/F$ must be tame if $\rm{Rad}(X)=\cN_{K/F}$ (cf. \cite{FV}, p. 115).
\end{rem}

\section{\textbf{Invariant formula for $W(\rho)$}}

\begin{lem}\label{Lemma 5.2.1}
Let $\rho=\rho(Z,\chi_Z)$ be a Heisenberg representation of the local Galois group $G=\mathrm{Gal}(L/F)$ of odd dimension.
Let $H$ be a maximal isotropic subgroup for $\rho$ and $\chi_H\in\widehat{H}$ with $\chi_H|_{Z}=\chi_Z$
then:
 \begin{equation}\label{eqn 4.9}
  W(\rho)=W(\chi_H),\hspace{.5cm} W(\rho)^{\mathrm{dim}(\rho)}=W(\chi_Z),
 \end{equation}
 and 
 \begin{equation}
  W(\chi_H)^{[H:Z]}=W(\chi_Z).
 \end{equation}
 \end{lem}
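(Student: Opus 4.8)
The plan is to derive all three equalities from the induction formula (\ref{eqn 2.6}) for the weakly extendible function $W$, once the two relevant $\lambda$-factors have been shown to be trivial. First I would record the field tower $F\subseteq E\subseteq K\subseteq L$ corresponding to the chain $\{1\}\subseteq Z\subseteq H\subseteq G$, so that $Z=\rm{Gal}(L/K)$, $H=\rm{Gal}(L/E)$ and $G=\rm{Gal}(L/F)$. Because $Z=Z_\rho$ is normal in $G$, the extension $K/F$ is Galois with $\rm{Gal}(K/F)\cong G/Z$; and since $H/Z$ is maximal isotropic for the nondegenerate alternating form on $G/Z$ (Proposition \ref{Proposition 3.1}(c)), we have $[G:Z]=\rm{dim}(\rho)^2$ and $[G:H]=[H:Z]=\rm{dim}(\rho)$.

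The decisive point is that $\rm{dim}(\rho)$ odd makes $|G/Z|=\rm{dim}(\rho)^2$ odd, so $\rm{Gal}(K/F)$ is a local Galois group of odd order with $E$ and $K$ as intermediate fields over $F$. Theorem \ref{General Theorem for odd case}, applied with ambient odd-order extension $K/F$, then gives $\lambda_{E/F}=1$ and $\lambda_{K/F}=1$, i.e. $\lambda_H^G(W)=\lambda_Z^G(W)=1$. With this in hand I would prove the first identity from the Heisenberg presentation $\rho=\rm{Ind}_H^G(\chi_H)$: since $\rm{dim}(\chi_H)=1$, equation (\ref{eqn 2.6}) gives $W(\rho)=\lambda_H^G(W)\cdot W(\chi_H)=W(\chi_H)$. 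For the second identity I would start from the defining relation (\ref{eqn 322}), namely $\rm{dim}(\rho)\cdot\rho=\rm{Ind}_Z^G(\chi_Z)$; applying $W$, then using multiplicativity (\ref{eqn 2.3}) on the left and (\ref{eqn 2.6}) on the right, yields $W(\rho)^{\rm{dim}(\rho)}=\lambda_Z^G(W)\cdot W(\chi_Z)=W(\chi_Z)$. The third identity is then automatic: as $[H:Z]=\rm{dim}(\rho)$, the first two give $W(\chi_H)^{[H:Z]}=W(\chi_H)^{\rm{dim}(\rho)}=W(\rho)^{\rm{dim}(\rho)}=W(\chi_Z)$.

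The step needing the most care is the justification that the two $\lambda$-factors appearing here are genuinely those of subextensions of the odd-order Galois extension $K/F$, so that Theorem \ref{General Theorem for odd case} applies verbatim; everything else is a formal manipulation of the extendibility axioms. I would also make explicit that it is precisely the weak extendibility of $W$ (Langlands' theorem) that licenses formula (\ref{eqn 2.6}) even though $W$ fails the full inductivity (\ref{eqn 2.4}) for representations of nonzero dimension.
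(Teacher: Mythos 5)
Your proposal is correct and follows essentially the same route as the paper: both rest on the identities $\rho=\mathrm{Ind}_H^G(\chi_H)$ and $\mathrm{dim}(\rho)\cdot\rho=\mathrm{Ind}_Z^G(\chi_Z)$ together with weak extendibility of $W$, and then kill the two $\lambda$-factors using the oddness of $\mathrm{dim}(\rho)$. The only (harmless) differences are that you deduce $\lambda_H^G(W)=\lambda_Z^G(W)=1$ from Theorem \ref{General Theorem for odd case} applied to the odd-order Galois extension $K/F$, whereas the paper cites Lemma 3.4 of \cite{SAB1} directly, and that you obtain the third identity by simply combining the first two, where the paper re-derives it via transitivity of induction and $\lambda_Z^H=1$.
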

\begin{proof}
From the construction of Heisenberg representation $\rho$ of $G$ we have 
\begin{center}
 $\rho=\rm{Ind}_{H}^{G}(\chi_H)$, \hspace{.4cm} $\rm{dim}(\rho)\cdot\rho=\rm{Ind}_{Z}^{G}(\chi_Z)$.\\
 This implies that $W(\rho)=\lambda_{H}^{G}\cdot W(\chi_H)$ and $W(\rho)^{\rm{dim}(\rho)}=\lambda_{Z}^{G}\cdot W(\chi_Z).$
\end{center}
Since $\rm{dim}(\rho)$ is odd we may apply now Lemma 3.4 on p. 10 of \cite{SAB1}, and we obtain 
 \begin{center}
  $\lambda_{H}^{G}=\lambda_{Z}^{G}=1$.
 \end{center}
 So, we have $W(\rho)=\lambda_{H}^{G}(W)\cdot W(\chi_H)=W(\chi_H)$. Similarly, we have 
 $W(\rho)^{\mathrm{dim}(\rho)}=W(\chi_Z)$.
 
Moreover, it is easy to see\footnote{We have 
 \begin{center}
  $d\cdot\rho=\mathrm{Ind}_{Z}^{G}\chi_Z=\mathrm{Ind}_{H}^{G}(\mathrm{Ind}_{Z}^{H}\chi_Z)$,
 \end{center}
 and $\mathrm{Ind}_{Z}^{H}\chi_Z$ of dimension $d=[H:Z]$. Therefore:
 \begin{center}
  $W(\rho)^d=(\lambda_{H}^{G})^d\cdot W(\mathrm{Ind}_{Z}^{H}\chi_Z)$.
 \end{center}
On the other hand $W(\rho)=\lambda_{H}^{G}\cdot W(\chi_H)$ implies
\begin{center}
 $W(\rho)^d=(\lambda_{H}^{G})^d\cdot W(\chi_H)^d$.
\end{center}
Now comparing these two expressions for $W(\rho)^d$ we see that 
\begin{center}
 $W(\chi_H)^d=W(\mathrm{Ind}_{Z}^{H}\chi_Z)$.
\end{center}} that $W(\rm{Ind}_{Z}^{H}(\chi_Z))=W(\chi_H)^{[H:Z]}$.
By the given condition, $[H:Z]=\rm{dim}(\rho)$ is odd, hence $\lambda_{Z}^{H}=1$, 
then we have 
\begin{equation}\label{eqn 5.2.4}
 W(\chi_H)^{[H:Z]}=W(\rm{Ind}_{Z}^{H}(\chi_Z))=W(\chi_Z).
\end{equation}

 \end{proof}
 
\begin{rem}
 Related to $G\supset H\supset Z$ we have the base fields $F\subset E\subset K$, and $\chi_Z$ is the restriction of 
 $\chi_H$. In arithmetic terms this means:
 $$\chi_K=\chi_E\circ N_{K/E}.$$
 So in arithmetic terms of $W(\rm{Ind}_{Z}^{G}(\chi_Z))=W(\rm{Ind}_{H}^{G}(\chi_H))^{[G:H]}$ is as follows:
$$W(\rm{Ind}_{K/F}(\chi_K),\psi)=W(\rm{Ind}_{E/F}(\chi_E),\psi)^{[K:E]}.$$
Then we can conclude that 
$$\lambda_{K/E}\cdot W(\chi_K,\psi_K)=W(\chi_E,\psi_E)^{[K:F]}.$$
If the dimension $\rm{dim}(\rho)=[K:E]$ is odd, we have $\lambda_{K/E}=1$, because $K/E$ is Galois.
Then we obtain
\begin{equation}\label{eqn 5.2.5}
  W(\chi_E,\psi_E)^{[K:E]}=W(\chi_E\circ N_{K/E},\psi_E\circ\rm{Tr}_{K/E}).
 \end{equation}
The formula (\ref{eqn 5.2.5}) is known as a {\bf Davenport-Hasse} relation (cf. \cite{LN}, p. 197, Theorem 5.14).

\end{rem}

\begin{cor}
 Let $\rho=\rho(Z,\chi_Z)$ be a Heisenberg representation of a local Galois group $G$. Let $\rm{dim}(\rho)=d$ be odd.
 Let the order of $W(\chi_Z)$ be $n$ (i.e., $W(\chi_Z)^n=1$). If $d$ is prime to $n$, then $d^{\varphi(n)}\equiv 1\mod{n}$, and 
 $$W(\rho)=W(\chi_Z)^{\frac{1}{d}}=W(\chi_Z)^{d^{\varphi(n)-1}},$$
where $\varphi(n)$ is the Euler's totient function of $n$.
\end{cor}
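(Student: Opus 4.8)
The plan is to reduce the whole statement to the relation $W(\rho)^d=W(\chi_Z)$ already supplied by Lemma \ref{Lemma 5.2.1} and then to \emph{invert} the $d$-th power using that $\gcd(d,n)=1$. First I would record the arithmetic congruence: since $d$ is prime to $n$, Euler's theorem gives $d^{\varphi(n)}\equiv 1 \pmod{n}$, so that $d\cdot d^{\varphi(n)-1}\equiv 1\pmod n$; in other words $d^{\varphi(n)-1}$ represents the inverse of $d$ in $(\bbZ/n\bbZ)^\times$. This is the only number-theoretic input and it is immediate.

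The engine of the proof is the second identity of Lemma \ref{Lemma 5.2.1}, namely $W(\rho)^{\dim(\rho)}=W(\chi_Z)$, which here reads $W(\rho)^d=W(\chi_Z)$ because $\dim(\rho)=d$ is odd (so that $\lambda_Z^G=1$). Thus $W(\rho)$ is \emph{a} $d$-th root of $W(\chi_Z)$. To identify it, set $u:=W(\chi_Z)^{d^{\varphi(n)-1}}$ and compute $u^d=W(\chi_Z)^{d^{\varphi(n)}}$; writing $d^{\varphi(n)}=1+kn$ and using $W(\chi_Z)^n=1$ gives $u^d=W(\chi_Z)$. Hence $u$ and $W(\rho)$ are two $d$-th roots of the same quantity, so $(W(\rho)/u)^d=1$.

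The main obstacle is precisely the passage from $(W(\rho)/u)^d=1$ to $W(\rho)=u$, i.e.\ ruling out that the two roots differ by a nontrivial $d$-th root of unity. The clean way is to observe that $W(\chi_Z)$ generates a cyclic group $\mu_n$ of order $n$, and that on $\mu_n$ the $d$-th power map is an \emph{automorphism} (because $\gcd(d,n)=1$), with inverse given by raising to the power $d^{\varphi(n)-1}$; consequently $u$ is the \emph{unique} $d$-th root of $W(\chi_Z)$ lying in $\mu_n$. It therefore suffices to know that $W(\rho)\in\mu_n=\langle W(\chi_Z)\rangle$, equivalently that the order of $W(\rho)$ is coprime to $d$ (so that it equals $n$). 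I would deduce this from $W(\rho)^d=W(\chi_Z)$: raising $W(\rho)^n$ to the $d$-th power yields $(W(\rho)^n)^d=W(\chi_Z)^n=1$, so $W(\rho)^n$ is a $d$-th root of unity, and the remaining point is that it is trivial, which forces $\mathrm{ord}(W(\rho))\mid n$ and hence $W(\rho)\in\langle W(\chi_Z)\rangle$.

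Once $W(\rho)\in\langle W(\chi_Z)\rangle$ is in hand, uniqueness of the $d$-th root gives $W(\rho)=u=W(\chi_Z)^{d^{\varphi(n)-1}}$, which is exactly the meaning of the symbol $W(\chi_Z)^{1/d}$ in the statement; combined with the congruence $d^{\varphi(n)}\equiv1\pmod n$ recorded at the outset, this completes the proof. I expect the verification that $W(\rho)$ lies in $\langle W(\chi_Z)\rangle$ to be the genuinely substantive step, everything else being formal manipulation of roots of unity.
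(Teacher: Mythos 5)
Your proposal retraces the paper's own argument: Lemma \ref{Lemma 5.2.1} gives $W(\rho)^d=W(\chi_Z)$, Euler's theorem makes $d^{\varphi(n)-1}$ an inverse of $d$ modulo $n$, and one then tries to invert the $d$-th power map on roots of unity. But your proof stops exactly at the point you yourself call ``the genuinely substantive step,'' and that step is never carried out. Concretely: setting $u:=W(\chi_Z)^{d^{\varphi(n)-1}}$, your computation shows $u^d=W(\chi_Z)$ and hence $(W(\rho)/u)^d=1$, i.e.\ $W(\rho)\equiv u$ modulo $\mu_d$; to upgrade this to equality you need $W(\rho)\in\mu_n=\langle W(\chi_Z)\rangle$, equivalently $W(\rho)^n=1$, and your text only observes that $W(\rho)^n$ lies in $\mu_d$ before asserting that ``the remaining point is that it is trivial.'' That point cannot be extracted from the relation $W(\rho)^d=W(\chi_Z)$ at all: for any $\zeta\in\mu_d$ the number $\zeta u$ satisfies the same relation (because $\zeta^d=1$), and since $\gcd(d,n)=1$ such a number has order $ne$ for a divisor $e$ of $d$, so the hypotheses of the corollary place no restriction on this ambiguity. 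What your proposal actually proves is therefore only the congruence $W(\rho)\equiv W(\chi_Z)^{d^{\varphi(n)-1}}$ modulo $\mu_d$ --- which is the content of Theorem \ref{Theorem invariant odd}(1) --- not the asserted equality.

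You should know, however, that the paper's own proof commits the same gap, only silently. It computes, in effect, $W(\chi_Z)^{d^{\varphi(n)-1}}=W(\rho)^{d^{\varphi(n)}}$ and discards the factor $W(\rho)^{d^{\varphi(n)}-1}$ ``since $d^{\varphi(n)}-1$ is a multiple of $n$, and by assumption $W(\chi_Z)^n=1$''; but the exponent sits on $W(\rho)$, not on $W(\chi_Z)$, so what is really being used is $W(\rho)^n=1$ --- precisely the unproved statement you isolated. In other words, you have correctly diagnosed the weak point of the argument; neither you nor the paper closes it, and the paper implicitly concedes the difficulty immediately after the corollary, where it remarks that $W(\chi_Z)^{\frac{1}{d}}$ ``is not well-defined in general'' and one must make precise which root occurs, before retreating to the modulo-$\mu_d$ formulation of Theorem \ref{Theorem invariant odd}. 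A complete proof of the corollary as stated would require an independent argument that $\mathrm{ord}(W(\rho))$ is prime to $d$ (for instance via further arithmetic information about $W(\rho)=W(\chi_H)$ as a Gauss sum); no such argument is available from Lemma \ref{Lemma 5.2.1} alone.
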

 \begin{proof}
 By our assumption, here $d$ and $n$ are coprime. Therefore from {\bf Euler's theorem} we can write 
 $$d^{\varphi(n)}\equiv 1\mod{n}.$$
 This implies $d^{\varphi(n)}-1$ is a multiple of $n$.
 
 Here $d$ is odd, then from the above Lemma \ref{Lemma 5.2.1} we have $W(\rho)^d=W(\chi_Z)$.
 So we obtain
 $$W(\rho)=W(\chi_Z)^{\frac{1}{d}}=W(\chi_Z)^{d^{\varphi(n)-1}},$$
 since $d^{\varphi(n)}-1$ is a multiple of $n$, and by assumption $W(\chi_Z)^n=1$.
\end{proof}

We observe that when $\rm{dim}(\rho)=d$ is odd, if we take second part of the 
equation (\ref{eqn 4.9}), we have $W(\rho)=W(\chi_Z)^{\frac{1}{d}}$, but it is not well-defined in general. 
Here we have to make precise 
which root $W(\chi_Z)^{\frac{1}{d}}$ really occurs. That is why, giving invariant formula of
$W(\rho)$ using $\lambda$-functions computation is difficult. In the following theorem we give an invariant formula of local 
constant for Heisenberg representation.

\begin{thm}\label{Theorem invariant odd}
 Let $\rho=\rho(X,\chi_K)$ be a Heisenberg representation of the absolute Galois group $G_F$ of a local field $F/\bbQ_p$
 of dimension $d$. Let $\psi_F$ be the canonical additive character of $F$ and $\psi_K:=\psi_F\circ\rm{Tr}_{K/F}$.
 Denote $\mu_{p^\infty}$ as the group of roots of unity of $p$-power order and $\mu_{n}$ as the group of 
 $n$-th roots of unity, where $n>1$ is an integer.
 \begin{enumerate}
  \item When the dimension $d$ is odd, we have 
   \begin{center}
  $W(\rho)\equiv W(\chi_\rho)'\mod{\mu_{d}}$,
 \end{center}
where $W(\chi_\rho)'$ is any $d$-th root of 
$W(\chi_K,\psi_K)$.
\item When the dimension $d$ is even, we have 
 \begin{center}
  $W(\rho)\equiv W(\chi_\rho)'\mod{\mu_{d'}}$,
 \end{center}
 where $d'=\rm{lcm}(4,d)$.
 \end{enumerate}
 
\end{thm}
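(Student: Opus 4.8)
The plan is to start from the fundamental identity $d\cdot\rho=\mathrm{Ind}_{K/F}(\chi_K)$ of equation (\ref{eqn 5.1.5}), where $d=\mathrm{dim}(\rho)=\sqrt{[K:F]}$ and $\chi_K$ is viewed as a character of $G_K$ by class field theory. Applying the weak extendibility of $W$ (equations (\ref{eqn 2.3}) and (\ref{eqn 2.6})) to both sides gives
\begin{equation*}
 W(\rho)^d=W(\mathrm{Ind}_{K/F}(\chi_K),\psi_F)=\lambda_{K/F}(\psi_F)\cdot W(\chi_K,\psi_K),\qquad \psi_K=\psi_F\circ\mathrm{Tr}_{K/F},
\end{equation*}
where $\lambda_{K/F}(\psi_F)=\lambda_Z^{G}(W)$ is the Langlands $\lambda$-factor of the extension $K/F$ cut out by $Z=G_K$ (this extends the second identity of Lemma \ref{Lemma 5.2.1} to arbitrary $d$). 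Thus $W(\rho)$ is, up to the single scalar $\lambda_{K/F}(\psi_F)$, a $d$-th root of $W(\chi_K,\psi_K)$, and the whole statement reduces to controlling the order of this root of unity.

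For part (1), the group $G/Z=\mathrm{Gal}(K/F)$ carries the nondegenerate alternating character of Proposition \ref{Proposition 3.1}(c), so it is a symplectic abelian group of order $[K:F]=d^2$, which is odd. Theorem \ref{General Theorem for odd case} then forces $\lambda_{K/F}(\psi_F)=1$, whence $W(\rho)^d=W(\chi_K,\psi_K)$. Since any two $d$-th roots of a fixed number differ by an element of $\mu_d$, and $W(\chi_\rho)'$ is by definition such a $d$-th root, we get $W(\rho)\equiv W(\chi_\rho)'\pmod{\mu_d}$.

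For part (2) the same relation reduces everything to showing that the order of $\lambda_{K/F}(\psi_F)$ divides $d'/d=4/\gcd(4,d)$: writing $\zeta=W(\rho)/W(\chi_\rho)'$ one has $\zeta^d=\lambda_{K/F}(\psi_F)$, and since varying the $d$-th root $W(\chi_\rho)'$ alters $\zeta$ only by $\mu_d\subseteq\mu_{d'}$, the congruence $\zeta\in\mu_{d'}$ for every choice is equivalent to $\lambda_{K/F}(\psi_F)^{d'/d}=1$. First I would split the symplectic group $\mathrm{Gal}(K/F)\cong\prod_i(\bbZ_{n_i}\times\bbZ_{n_i})$ into its $2$-primary and odd parts, use multiplicativity of $\lambda$ along the corresponding tower, and discard the odd part by Theorem \ref{General Theorem for odd case}, reducing to the $\lambda$-factor of a symplectic $2$-extension. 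Next, from $\lambda_{K/F}(\psi_F)=\prod_{\chi}W(\chi)$ over all characters $\chi$ of the abelian group $\mathrm{Gal}(K/F)$, together with the standard relation $\overline{W(\chi)}=\chi(-1)W(\overline{\chi})$ and $|W(\chi)|=1$, one obtains $\lambda_{K/F}(\psi_F)^2=\prod_{\chi}\chi(-1)$, and this product is trivial because the $2$-torsion of $\mathrm{Gal}(K/F)$ has rank at least $2$. Hence $\lambda_{K/F}(\psi_F)\in\mu_2$, which already settles the subcase $d\equiv2\pmod4$, where $d'=2d$ and $\zeta^{2d}=1$.

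The remaining subcase $4\mid d$ is where the real work lies: here $d'=d$, so one must sharpen $\lambda_{K/F}(\psi_F)\in\mu_2$ to $\lambda_{K/F}(\psi_F)=1$. I would evaluate the $2$-part $\lambda$-factor along the maximal isotropic tower $K\supset E\supset F$ using the explicit formulas of Theorems \ref{Theorem 2.5} and \ref{Theorem 3.21}: the contribution $\lambda_{E/F}^{\,d}$ of the maximal isotropic subextension dies because $\lambda_{E/F}\in\mu_4$ and $4\mid d$, while the surviving factor $\lambda_{K/E}$ has the shape $\beta(-1)\,W(\alpha')^{\pm1}$, whose square $\alpha'(-1)$ we have just seen is $1$. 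The decisive point is to show that the resulting sign is $+1$, by tracking the tame quadratic Gauss sums $W(\alpha)$ and the values $\beta(-1)$ through the symplectic structure. This sign computation is the main obstacle, and it is precisely where Theorems \ref{Theorem 2.5} and \ref{Theorem 3.21} on classical Gauss sums and tame $\lambda$-factors must be invoked.
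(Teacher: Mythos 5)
Your part (1) coincides with the paper's own proof: both start from $d\cdot\rho=\mathrm{Ind}_{K/F}(\chi_K)$, apply weak extendibility to get $W(\rho)^d=\lambda_{K/F}(\psi_F)\cdot W(\chi_K,\psi_K)$, and invoke Theorem \ref{General Theorem for odd case} to kill $\lambda_{K/F}$ because $[K:F]=d^2$ is odd. Nothing further is needed there.

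In part (2) you take a genuinely different route, and your diagnosis is sharper than the paper's own argument. The paper only uses $\lambda_{K/F}\in\mu_4$ to get $\bigl(W(\rho)/W(\chi_\rho)'\bigr)^d\in\mu_4$ and then concludes $W(\rho)\equiv W(\chi_\rho)'\bmod\mu_{d'}$ with $d'=\mathrm{lcm}(4,d)$. That last inference is a non sequitur: $\zeta^d\in\mu_4$ only forces $\zeta\in\mu_{4d}$, and $\mu_{\mathrm{lcm}(4,d)}$ is strictly smaller than $\mu_{4d}$ for even $d$ (for $d=2$ one has $d'=4$, yet $\zeta^2=i$ is compatible with $\zeta$ of order $8$). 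Your reformulation --- that the asserted congruence is \emph{equivalent} to $\lambda_{K/F}^{d'/d}=1$, i.e. to $\lambda_{K/F}^2=1$ when $d\equiv2\pmod4$ and to $\lambda_{K/F}=1$ when $4\mid d$ --- is exactly right, and your proof that $\lambda_{K/F}^2=\prod_\chi\chi(-1)=1$ is correct: evaluation at $-1$ is a homomorphism $\widehat{\mathrm{Gal}(K/F)}\to\{\pm1\}$, so the product is either $1$ or $(-1)^{d^2/2}$, and $4\mid d^2$ since $d$ is even. This settles the subcase $d\equiv2\pmod4$, which the paper's proof does not actually establish.

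The genuine gap in your proposal is the subcase $4\mid d$, where $d'=d$ and one needs $\lambda_{K/F}=1$ on the nose. You only sketch an attack via the tame $\lambda$-formulas of Theorems \ref{Theorem 2.5} and \ref{Theorem 3.21} and explicitly flag the final sign as ``the main obstacle'' without resolving it, so your text does not prove the theorem as stated in that case. Be aware, however, that this is precisely the point where the paper's own proof also fails: it nowhere establishes $\lambda_{K/F}^{d'/d}=1$, and without that input the only conclusion justified in the even case --- for both your argument and the paper's --- is the weaker congruence $W(\rho)\equiv W(\chi_\rho)'\bmod\mu_{4d}$.
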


\begin{proof}
{\bf (1).}
We know that the lambda functions are always fourth roots of unity. In particular, when degree of the Galois extension 
$K/F$ is odd, from Theorem \ref{General Theorem for odd case} we have $\lambda_{K/F}=1$. For proving our assertions we will use these 
facts about $\lambda$-functions.

We know that $\rm{dim}(\rho)\cdot\rho=\rm{Ind}_{K/F}(\chi_K)$, where by class field theory $\chi_K\leftrightarrow\chi_\rho$ 
 is a character of $K^\times$.
When $d$ is odd, we can write 
 $$W(\rho)^d=\lambda_{K/F}\cdot W(\chi_K,\psi_K)= W(\chi_K,\psi_K).$$
 Now let $W(\chi_\rho)'$ be any $d$-th root of $W(\chi_K,\psi_K)$. Then we have 
 $$W(\rho)^d={W(\chi_\rho)'}^d,$$
 hence $\frac{W(\rho)}{W(\chi_\rho)}$ is a $d$-th root of unity. Therefore we have
 $$W(\rho)\equiv W(\chi_\rho)' \mod{\mu_{d}}.$$
 {\bf (2).}
Similarly, we can give invariant formula for even degree Heisenberg representations. When the dimension $d$ of $\rho$ is even, we have 
\begin{equation}\label{eqn 5.2.10}
 W(\rho)^d=\lambda_{K/F}\cdot W(\chi_K,\psi_K)\equiv W(\chi_K,\psi_K)\mod{\mu_4},
\end{equation}
because $\lambda_{K/F}$ is a fourth root of unity.
Now let $W(\chi_\rho)'$ be any $d$-th root of $W(\chi_K,\psi_K)$, hence $W(\chi_K,\psi_K)=W(\chi_\rho)'^d$. Then from equation 
(\ref{eqn 5.2.10}) we have 
$$\left(\frac{W(\rho)}{W(\chi_\rho)'}\right)^d\equiv 1\mod{\mu_4}.$$
Therefore we can conclude that 
\begin{equation}
 W(\rho)\equiv W(\chi_\rho)'\mod{\mu_{d'}},
\end{equation}
where $d'=\rm{lcm}(4, d)$.\\

\end{proof}

When dimension of a Heisenberg representation $\rho=\rho(X,\chi_K)$ of $G_F$ is prime to $p$, then from Lemma 
\ref{Lemma dimension equivalent} we can say that $X=X_\eta$ is U-isotropic with $\eta:U_F/U_F^1\to\bbC^\times$. Again from 
Remark \ref{Remark 5.1.22} we observe that $a(\chi_K)=1$ when $\rho$ is of minimal conductor.  In the following lemma for minimal 
conductor $\rho$ with dimension prime to $p$, we show that 
$W(\rho)$ is a root of unity.

\begin{lem}\label{Lemma 5.2.12}
 Let $\rho=\rho(X,\chi_K)$ be a minimal conductor Heisenberg representation with respect to $X$ 
 of the absolute Galois group $G_F$ of a non-archimedean local field $F/\bbQ_p$.
 If dimension $\rm{dim}(\rho)$ is prime to $p$, then $W(\rho)$ is always a root of unity.\\
\end{lem}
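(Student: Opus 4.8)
The plan is to reduce $W(\rho)$ to a classical Gauss sum over the residue field of $K$ and then invoke Odoni's purity result. Since $\dim(\rho)=d$ is prime to $p$, Lemma \ref{Lemma dimension equivalent} tells us that $X=X_\eta$ is $U$-isotropic for a character $\eta$ of $U_F/U_F^1$ with $\#\eta=d\mid q_F-1$, and the associated extension $K=K_\eta$ is tame with $f_{K/F}=e_{K/F}=d$. First I would recall from $d\cdot\rho=\rm{Ind}_{K/F}(\chi_K)$, together with the weak extendibility of $W$ and the $\lambda$-factor formula (\ref{eqn 2.6}), the identity
$$W(\rho)^d=\lambda_{K/F}(\psi_F)\cdot W(\chi_K,\psi_K),$$
exactly as in the proof of Theorem \ref{Theorem invariant odd}. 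Because $\lambda$-functions are always fourth roots of unity, it suffices to prove that the abelian local constant $W(\chi_K,\psi_K)$ is a root of unity; then $W(\rho)^d$ is a root of unity, and hence so is $W(\rho)$ (since $\lvert W(\rho)\rvert=1$ and a $d$-th root of a root of unity is again a root of unity).

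Next I would identify $W(\chi_K,\psi_K)$ as a Gauss sum. Since $\rho$ is of minimal conductor and $\dim(\rho)$ is prime to $p$, Remark \ref{Remark 5.1.22} gives $a(\chi_K)=1$, so $\chi_K$ is a tamely ramified character of $K^\times$ whose restriction to $U_K/U_K^1\cong k_K^\times$ is nontrivial. Substituting $a(\chi_K)=1$ into the abelian formula (\ref{eqn 2.9}) then expresses $W(\chi_K,\psi_K)$ as $q_K^{-1/2}$ times a classical Gauss sum $G(\overline{\chi_K},\overline\psi)$ over the residue field $k_K=\bbF_{q_K}$, where $q_K=q_F^{f_{K/F}}=q_F^{d}$.

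The decisive step is the purity of this Gauss sum. Writing $q_F=p^s$, we have $q_K=p^{sd}$ with $sd\ge 2$ (here I use $d\ge 2$, i.e. that $\rho$ is a genuine, non-abelian Heisenberg representation). Hence $k_K$ is a proper extension of the prime field, and Odoni's theorem (\cite{BRK}, p.~33, Theorem 1.6.2) applies to give that $G(\overline{\chi_K},\overline\psi)/\sqrt{q_K}=W(\chi_K,\psi_K)$ is a root of unity. Combined with the first step, $W(\rho)^d$ is a root of unity, and therefore $W(\rho)$ is a root of unity.

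I expect the main obstacle to be precisely this purity step, and in particular the role of the residue-field size. Chowla's theorem (Theorem \ref{Theorem Chowla}) shows that over a prime residue field the Gauss sum of a character of order $\ge 3$ is \emph{not} $\sqrt{q}$ times a root of unity; thus the argument genuinely depends on $q_K=p^{sd}$ being a nontrivial prime power, which holds except in the single degenerate case $s=d=1$ (that is, $q_F=p$ together with $\dim(\rho)=1$). A secondary point to verify is the hypothesis that $p$ be odd in Odoni's result; the case $p=2$ (where $d$ is automatically odd) should either be treated separately or excluded from the hypotheses of the statement.
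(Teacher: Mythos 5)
Your proposal follows essentially the same route as the paper's own proof: reduce via $d\cdot\rho=\mathrm{Ind}_{K/F}(\chi_K)$ to $W(\rho)^d=\lambda_{K/F}\cdot W(\chi_K,\psi_K)$, use Remark \ref{Remark 5.1.22} to get $a(\chi_K)=1$, identify $W(\chi_K,\psi_K)$ as a normalized classical Gauss sum over $k_K$ with $q_K=p^{sd}$, and invoke Odoni's purity theorem together with $\lambda_{K/F}^4=1$. Your closing caveats (the degenerate case $s=d=1$ where $q_K$ is prime, and the hypothesis that $p$ be odd in Odoni's theorem) are points the paper's proof silently glosses over, so they are worth flagging, but they do not change the fact that your argument and the paper's coincide.
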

\begin{proof}
 Assume that $\rm{dim}(\rho)=d$ and $gcd(g,p)=1$. Then from Lemma \ref{Lemma dimension equivalent}, we can say that 
 $\rho=\rho(X,\chi_K)=\rho(X_\eta,\chi_K)$ is a U-isotropic with $a(\eta)=1$. Since $\rho$ is of minimal conductor,
 from Remark \ref{Remark 5.1.22} we have 
 $a(\chi_K)=1$. 
 
 From equation (\ref{eqn 5.1.5}) we also know that: 
\begin{center}
 $d\cdot\rho=\rm{Ind}_{K/F}(\chi_K)$.
\end{center}
Then we can write 
\begin{align}
 W(\rho)^d
 &=\lambda_{K/F}\cdot W(\chi_K)\nonumber\\
 &=\lambda_{K/F}\cdot q_{K}^{-\frac{1}{2}}\sum_{x\in U_K/U_{K}^{1}}\chi_{K}^{-1}(x/c)\psi_K(x/c)\nonumber\\
 &=\lambda_{K/F}\cdot q_{K}^{-\frac{1}{2}}\tau(\chi_K),\label{eqn 4.25}
\end{align}
where $c=\pi_{K}^{1+n(\psi_K)}$, $\psi_K=\psi_F\circ\rm{Tr}_{K/F}$, the canonical character of $K$
and 
\begin{equation}
 \tau(\chi_K)=\sum_{x\in U_K/U_{K}^{1}}\chi_{K}^{-1}(x/c)\psi_K(x/c).
\end{equation}
Since $U_K/U_{K}^{1}\cong k_{K}^{\times}$, $a(\chi_K)=1$, and $n(\frac{1}{c}\cdot\psi_K)=-1$,
we can consider $\tau(\chi_K)$ as a classical Gauss sum 
of $\chi_K$. We also know that $|\tau(\chi_K)|=q_{K}^{\frac{1}{2}}$ (cf. \cite{M}, p. 30, Proposition 2.2(i)).

Moreover, here we have $f_{K/F}=e_{K/F}=d$, hence $f_{K/\bbQ_p}\ge d$. So here we have $q_K=p^{f_{K/\bbQ_p}}\ge p^d$. Then from 
Theorem 1.6.2 on p. 33 of \cite{BRK}, we can write 
\begin{center}
 $\tau(\chi_K)=q_{K}^{\frac{1}{2}}\cdot\gamma$,
\end{center}
where $\gamma$ is a certain root of unity.

We also know that $\lambda_{K/F}^{4}=1$, then from the equation (\ref{eqn 4.25}) we obtain:
\begin{equation}
 W(\rho)^{4 d n}=\gamma^{4n}=1,
\end{equation}
where $n$ is the order of $\gamma$.

 This completes the proof.
\end{proof}

\begin{rem}
 As to the computation of $W(\rho)=W(\rho(X,\chi_K))$ we also can precisely say what an unramified
twist will do by the formula of local constant of unramified character twist (cf. \cite{JT2}, p. 15, (3.4.5)). Let 
$\omega_{K,s}$ be an unramified character of $K^\times$ such that $\omega_{K,s}|_{F^\times}=\omega_{F,s}$, then we have
\begin{equation}
 \omega_{F,s}\otimes\rho(X,\chi_K)=\rho(X,\omega_{K,s}\cdot\chi_K),\hspace{.3cm} W(\rho(X,\omega_{K,s}\cdot\chi_K))=
 \omega_{F,s}(c_{\rho,\psi})\cdot W(\rho(X,\chi_K)).
\end{equation}
Therefore the question: Is $W(\rho)$ a root of unity or not?, is completely under control if
we do unramified twists. In particular, unramified twists of finite order cannot change the
answer.
\end{rem}

In the following theorem we give an invariant formula for $W(\rho,\psi)$, where 
$\rho=\rho(X,\chi_K)$ is a minimal conductor Heisenberg representation of the absolute Galois group $G_F$
of a local field $F/\bbQ_p$ of dimension $m$ which is prime to $p$.

 \begin{thm}\label{invariant formula for minimal conductor representation}
  Let $\rho=\rho(X,\chi_K)$ be a minimal conductor Heisenberg representation of the absolute Galois group $G_F$ of a non-archimedean
  local field $F/\bbQ_p$ of dimension $m$ with $gcd(m,p)=1$. Let $\psi$ be a nontrivial additive character of $F$. Then 
  \begin{equation}
   W(\rho,\psi)=R(\psi,c)\cdot L(\psi,c),
  \end{equation}
where 
$$R(\psi,c):=\lambda_{E/F}(\psi)\Delta_{E/F}(c),$$
is a fourth root of unity that depends on $c\in F^\times$ with $\nu_F(c)=1+n(\psi)$ but not on the totally ramified cyclic subextension
$E/F$ in $K/F$, and 
$$L(\psi,c):=\det(\rho)(c)q_F^{-\frac{1}{2}}\sum_{x\in k_F^\times}(\chi_K\circ N_{E_1/F}^{-1})^{-1}(x)\cdot (c^{-1}\psi)(mx),$$
where $E_1/F$ is the unramified extension of $F$ of degree $m$.
 \end{thm}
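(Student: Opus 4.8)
The plan is to compute $W(\rho,\psi)$ by inducing $\rho$ from a totally ramified cyclic subextension and then evaluating the resulting abelian local constant as a classical Gauss sum over the residue field $k_F$. Since $\rm{dim}(\rho)=m$ is prime to $p$, Lemma \ref{Lemma dimension equivalent} and Corollary \ref{Corollary U-isotropic} give that $X=X_\eta$ is $U$-isotropic with $\eta\in\widehat{U_F/U_F^1}$ of order $m$, and that $m\mid q_F-1$; in particular $\mu_m\subset F$, so there is a cyclic totally tamely ramified $E/F$ of degree $m$ with $\rho=\rm{Ind}_{E/F}(\chi_E)$ (Lemma \ref{Lemma general conductor}), and we may fix a uniformizer with $\pi_E^m=\pi_F$. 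By inductivity of the local constant through the $\lambda$-function,
\begin{equation*}
 W(\rho,\psi)=\lambda_{E/F}(\psi)\cdot W(\chi_E,\psi_E),\qquad \psi_E=\psi\circ\rm{Tr}_{E/F}.
\end{equation*}

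Next I would make the abelian factor explicit. Because $\rho$ has minimal conductor and $\rm{dim}(\rho)$ is prime to $p$, Remark \ref{Remark 5.1.22} gives $a(\chi_E)=1$, so $\chi_E$ is tame. Applying the abelian formula (\ref{eqn 2.9}) over $E$ with $a(\chi_E)=1$, the summation runs over $U_E/U_E^1=k_E^\times=k_F^\times$ and $q_E=q_F$; computing conductors gives $n(\psi_E)=m\,n(\psi)+(m-1)$, so that $c_E=\pi_E^{1+n(\psi_E)}=\pi_F^{1+n(\psi)}=:c$ lies in $F^\times$ with $\nu_F(c)=1+n(\psi)$. For a Teichm\"uller representative $x\in k_F^\times\subset F\subset E$ one has $\rm{Tr}_{E/F}(x/c)=m\,(x/c)$, hence $\psi_E(x/c)=(c^{-1}\psi)(mx)$. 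This yields
\begin{equation*}
 W(\chi_E,\psi_E)=\chi_E(c)\,q_F^{-\frac12}\sum_{x\in k_F^\times}\chi_E^{-1}(x)\,(c^{-1}\psi)(mx),
\end{equation*}
which already has the shape of $L(\psi,c)$.

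It remains to rewrite the prefactor and the summation character intrinsically. Since the totally ramified $E$ is the base field of a maximal isotropic for $\rho$, Proposition \ref{Proposition arithmetic form of determinant} gives $\det(\rho)(c)=\Delta_{E/F}(c)\,\chi_E(c)$, and as $\Delta_{E/F}$ is quadratic this is equivalent to $\chi_E(c)=\Delta_{E/F}(c)\det(\rho)(c)$. The same proposition, together with $F^\times\subseteq\cN_{K/E}$ and $\chi_E|_{\cN_{K/E}}=\chi_K\circ N_{K/E}^{-1}$, identifies $\chi_E|_{k_F^\times}$ with the residue character $(\chi_K\circ N_{E_1/F}^{-1})|_{k_F^\times}$ attached to the unramified degree-$m$ extension $E_1/F$. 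Substituting both identities and collecting $\lambda_{E/F}(\psi)\Delta_{E/F}(c)$ into $R(\psi,c)$ and the remaining Gauss sum into $L(\psi,c)$ proves the factorization $W(\rho,\psi)=R(\psi,c)L(\psi,c)$.

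Finally, $R(\psi,c)=\lambda_{E/F}(\psi)\Delta_{E/F}(c)$ is a fourth root of unity because $\lambda$-functions are always fourth roots of unity and $\Delta_{E/F}(c)=\pm1$; its independence of the chosen totally ramified $E$ is then automatic, since $W(\rho,\psi)$ is intrinsic and $L(\psi,c)$ is built only from the unique unramified $E_1/F$, whence $R(\psi,c)=W(\rho,\psi)/L(\psi,c)$ cannot depend on $E$. The main obstacle is the character identification in the third step: matching the residue character $\chi_E|_{k_F^\times}$ coming from the totally ramified induction with the intrinsic character $\chi_K\circ N_{E_1/F}^{-1}$ coming from the unramified side. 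This requires care precisely when $m$ is even, where $\Delta_{E/F}$ restricts to the nontrivial quadratic character of $k_F^\times$ and must be absorbed correctly into the definition of $\chi_K\circ N_{E_1/F}^{-1}$; here I would lean on Proposition \ref{Proposition arithmetic form of determinant} and the Explicit Lemma \ref{Explicit Lemma} to pin down both characters through $\chi_K$ and $\eta$.
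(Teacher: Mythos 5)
Your computational core is correct and essentially identical to the paper's: you induce from a totally ramified cyclic $E/F$ of degree $m$ inside $K/F$, apply Tate's abelian formula (\ref{eqn 2.9}) to the tame character $\chi_E$ (of conductor $1$ by Remark \ref{Remark 5.1.22}), use $q_E=q_F$ and $\mathrm{Tr}_{E/F}(x)=mx$ for $x\in F$, and then rewrite $\chi_E(c)=\Delta_{E/F}(c)\det(\rho)(c)$ and $\chi_E|_{U_F}=(\chi_K\circ N_{E_1/F}^{-1})|_{U_F}$ via Proposition \ref{Proposition arithmetic form of determinant}. Where you genuinely depart from the paper is the invariance of $R(\psi,c)$. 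The paper proves directly that $R(\psi,c)=\lambda_{E/F}(\psi)\Delta_{E/F}(c)=\lambda_{E/F}(c\psi)$ does not depend on $E$: this is immediate for $m$ odd, but for $m$ even it requires reducing to the $2$-primary part, invoking Theorem \ref{Theorem 2.5} and the invariance results of \cite{SAB1} for the odd-level character $c\psi$, plus a separate argument that $\beta(-1)$ is independent of $E$ when $[E:F]=4$. You instead get invariance by division: $W(\rho,\psi)$ is intrinsic, $L(\psi,c)$ is built only from $\det(\rho)$, $\chi_K$, and the unique unramified $E_1/F$, hence $R(\psi,c)=W(\rho,\psi)/L(\psi,c)$ cannot depend on $E$. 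This shortcut is legitimate and much shorter; what the paper's longer route buys is the identification of $R(\psi,c)$ as a concrete $\lambda$-value, explicitly computable through Theorems \ref{Theorem 3.21} and \ref{Theorem 2.5}, which the division argument does not produce.

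Two points must be made explicit for your shortcut to be airtight. First, you evaluate Tate's formula at $c=\pi_F^{1+n(\psi)}$ with $\pi_E^m=\pi_F$, so your $c$ depends on $E$; comparing two extensions $E\ne E'$ then yields identities at two different elements $c_E\ne c_{E'}$, and the division does not compare $R_E$ and $R_{E'}$ at the same $c$. The fix is immediate: formula (\ref{eqn 2.9}) holds for any $c$ of valuation $a(\chi_E)+n(\psi_E)$, so fix $c\in F^\times$ with $\nu_F(c)=1+n(\psi)$ once and for all, independently of $E$ (this is exactly what the paper does); alternatively, establish the transformation rules of Lemma \ref{Lemma 5.2.17}. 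Second, the division requires $L(\psi,c)\ne 0$: since $a(\chi_E)=1$, the character $(\chi_K\circ N_{E_1/F}^{-1})|_{k_F^\times}$ is nontrivial, and $x\mapsto(c^{-1}\psi)(mx)$ is a nontrivial additive character of $k_F$ because $n(c^{-1}\psi)=-1$ and $\gcd(m,p)=1$; hence the sum is a classical Gauss sum of modulus $q_F^{1/2}$ and $|L(\psi,c)|=1$. Finally, your closing worry about the even-$m$ case is unfounded: the identities $\chi_E=\Delta_{E/F}\cdot\det(\rho)$ on $F^\times$ and $\chi_E=\chi_K\circ N_{E_1/F}^{-1}$ on $U_F$ hold uniformly in the parity of $m$, and all $E$-dependence sits in $\Delta_{E/F}(c)$ and $\lambda_{E/F}(\psi)$, which your division argument disposes of at once.
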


 Before proving this Theorem \ref{invariant formula for minimal conductor representation} we need to prove the following
 lemma.
 \begin{lem}\label{Lemma 5.2.17} (With the notation of the above theorem)
  \begin{enumerate}
   \item Let $E/F$ be any totally ramified cyclic extension of degree $m$ inside $K/F$. Then:
   $$\Delta_{E/F}(\epsilon)=:\Delta(\epsilon),\qquad\text{for all $\epsilon\in U_F$},$$
does not depend on $E$ if we restrict to units of $F$.
\item We have $L(\psi,\epsilon c)=\Delta(\epsilon)L(\psi,c)$, and therefore changing $c$ by unit we see that 
$$\Delta_{E/F}(\epsilon c)\cdot L(\psi,\epsilon c)=\Delta(\epsilon)^2\Delta_{E/F}(c)\cdot L(\psi,c)=\Delta_{E/F}(c) L(\psi,c).$$
\item We also have the transformation rule $R(\psi,\epsilon c)=\Delta(\epsilon)\cdot R(\psi,c)$.   
   \end{enumerate}
\end{lem}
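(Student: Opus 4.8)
The plan is to dispatch the three assertions in turn, exploiting throughout that $\Delta_{E/F}$ is a \emph{quadratic} character: by the determinant computation of Proposition~\ref{Proposition arithmetic form of determinant} (via Miller's theorem) it is a product of characters of $\mathrm{Gal}(E/F)$, hence $\Delta(\epsilon)^2=1$ for every $\epsilon\in U_F$. This single fact drives the purely formal steps (the final identity of part (2) and all of part (3)), so the only genuine computation is the behaviour of the Gauss sum $L(\psi,c)$ under $c\mapsto\epsilon c$.

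For part (1), I would start from $\Delta_{E/F}=\prod_{\chi\in\widehat{\mathrm{Gal}(E/F)}}\chi$ as in Proposition~\ref{Proposition arithmetic form of determinant}. Since $E/F$ is totally and tamely ramified cyclic of degree $m$, $\mathrm{Gal}(E/F)\cong\bbZ_m$. If $m$ is odd then $\mathrm{rk}_2(\bbZ_m)=0$, so $\Delta_{E/F}\equiv1$ and the claim is vacuous with $\Delta(\epsilon)=1$. If $m$ is even then $\mathrm{rk}_2(\bbZ_m)=1$ and $\Delta_{E/F}=\omega_{E'/F}$, where $E'/F$ is the (necessarily totally ramified, tame) quadratic subextension of $E/F$. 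The crux is that the restriction to $U_F$ of the quadratic character of \emph{any} totally ramified tame quadratic extension is the quadratic residue (Legendre) symbol of $k_F^\times$: for such $E'$ the residue fields coincide, a unit $\epsilon\in U_{E'}$ has $N_{E'/F}(\epsilon)\equiv\bar\epsilon^{2}$, so $\cN_{E'/F}\cap U_F$ is exactly the kernel of the quadratic residue symbol, independently of which ramified quadratic $E'$ is chosen. Hence $\Delta_{E/F}|_{U_F}=:\Delta$ does not depend on $E$.

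For part (2), I would first carry out the change of variables in the Gauss sum. Writing $(c^{-1}\psi)(mx)=\psi(mx/c)$ and substituting $x\mapsto\bar\epsilon x$ (a permutation of $k_F^\times$), the additive argument $mx/(\epsilon c)$ becomes $mx/c$, while the character $(\chi_K\circ N_{E_1/F}^{-1})^{-1}$ contributes a factor at $\epsilon$; together with $\det(\rho)(\epsilon c)=\det(\rho)(\epsilon)\det(\rho)(c)$ this gives
\begin{equation*}
L(\psi,\epsilon c)=\det(\rho)(\epsilon)\cdot(\chi_K\circ N_{E_1/F}^{-1})^{-1}(\epsilon)\cdot L(\psi,c).
\end{equation*}
It then remains to identify the prefactor with $\Delta(\epsilon)$. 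For this I would apply the determinant formula of Proposition~\ref{Proposition arithmetic form of determinant} with the \emph{totally ramified} base field $E$, namely $\det(\rho)(\epsilon)=\Delta_{E/F}(\epsilon)\cdot(\chi_K\circ N_{K/E}^{-1})(\epsilon)=\Delta(\epsilon)\cdot(\chi_K\circ N_{K/E}^{-1})(\epsilon)$, and match the Gauss-sum character: on $U_F$ one has $(\chi_K\circ N_{E_1/F}^{-1})|_{U_F}=(\chi_K\circ N_{K/E}^{-1})|_{U_F}$, both describing $\det(\rho)\cdot\Delta^{-1}$ on units (here one uses that $\Delta_{E_1/F}|_{U_F}=1$, the quadratic subextension of the unramified $E_1/F$ being unramified, and that all the relevant characters have conductor $1$ by Remark~\ref{Remark 5.1.22}). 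This yields $\det(\rho)(\epsilon)\cdot(\chi_K\circ N_{E_1/F}^{-1})^{-1}(\epsilon)=\Delta(\epsilon)$, so $L(\psi,\epsilon c)=\Delta(\epsilon)L(\psi,c)$. The ``therefore'' is now formal: combining with part (1), $\Delta_{E/F}(\epsilon c)L(\psi,\epsilon c)=\Delta(\epsilon)^2\Delta_{E/F}(c)L(\psi,c)=\Delta_{E/F}(c)L(\psi,c)$ since $\Delta(\epsilon)^2=1$.

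Part (3) is then immediate from part (1): in $R(\psi,c)=\lambda_{E/F}(\psi)\Delta_{E/F}(c)$ the factor $\lambda_{E/F}(\psi)$ is independent of $c$, so $R(\psi,\epsilon c)=\lambda_{E/F}(\psi)\Delta_{E/F}(\epsilon c)=\lambda_{E/F}(\psi)\Delta(\epsilon)\Delta_{E/F}(c)=\Delta(\epsilon)R(\psi,c)$. I expect the main obstacle to be the character-matching step in part (2): reconciling the Gauss-sum character, expressed through the unramified tower $E_1/F$, with the determinant formula taken through the totally ramified $E/F$. This is precisely where the quadratic twist $\Delta$ is forced to appear, and it is the one point requiring more than the determinant proposition, a standard residue-field norm computation, and bookkeeping with $\Delta^2=1$.
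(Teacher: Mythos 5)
Parts (1) and (3) of your proposal are correct. In (1) you argue directly that the restriction to $U_F$ of the quadratic character attached to \emph{any} totally ramified tame quadratic extension is the quadratic residue character of $k_F^\times$; the paper instead compares two totally ramified cyclic subextensions of $K/F$ and observes that their quadratic characters differ by the unramified quadratic character, which is trivial on $U_F$. Both routes work, and yours is arguably more self-contained. Part (3) is the same one-line computation as in the paper, and your change of variables $x\mapsto\bar\epsilon x$ in the Gauss sum of part (2) matches the paper's computation exactly.

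The gap is in your justification of the key character identity in part (2). You assert $(\chi_K\circ N_{E_1/F}^{-1})|_{U_F}=(\chi_K\circ N_{K/E}^{-1})|_{U_F}$ on the grounds that both equal $\det(\rho)\cdot\Delta^{-1}$ on units, ``using $\Delta_{E_1/F}|_{U_F}=1$''. But $\Delta_{E_1/F}$ enters only through Proposition \ref{Proposition arithmetic form of determinant} applied with base field $E_1$, and that formula reads $\det(\rho)=\Delta_{E_1/F}\cdot(\chi_K\circ N_{K/E_1}^{-1})$: the norm there is $N_{K/E_1}$ (from $K$ down to $E_1$), so the character it produces is the unramified inducing character $\chi_{E_1}$, which on units equals $\det(\rho)$ --- \emph{not} $\det(\rho)\cdot\Delta^{-1}$. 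The character appearing in $L(\psi,c)$ involves the different map $N_{E_1/F}$ (from $E_1$ down to $F$), and the two resulting characters, $\chi_{E_1}|_{U_F}=\det(\rho)|_{U_F}$ and $(\chi_K\circ N_{E_1/F}^{-1})|_{U_F}=\chi_E|_{U_F}=\det(\rho)\Delta^{-1}|_{U_F}$, differ by precisely the factor $\Delta$ you are trying to produce. So your argument is either circular, or, if one conflates the two norms, it yields the prefactor $1$ instead of $\Delta(\epsilon)$, i.e.\ $L(\psi,\epsilon c)=L(\psi,c)$; that is false, since together with your part (3) it would make $W(\rho,\psi)=R(\psi,c)L(\psi,c)$ depend on the choice of $c$. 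The missing ingredient is the paper's norm compatibility: since $K=EE_1$ and $E\cap E_1=F$, restriction gives $\mathrm{Gal}(K/E)\cong\mathrm{Gal}(E_1/F)$, hence $N_{K/E}|_{E_1^\times}=N_{E_1/F}$; moreover $U_F\subset\cN_{E_1/F}$ and $(E_1^\times)_F\subseteq K_E^\times\subseteq\mathrm{Ker}(\chi_K)$, so for $\epsilon\in U_F$ an $N_{E_1/F}$-preimage $u\in E_1^\times$ is in particular an $N_{K/E}$-preimage and $\chi_K(u)$ is independent of all choices. Only with this in hand does $(\chi_K\circ N_{E_1/F}^{-1})(\epsilon)=(\chi_K\circ N_{K/E}^{-1})(\epsilon)=\chi_E(\epsilon)$ hold, and only then does your determinant computation with base field $E$ give the prefactor $\Delta(\epsilon)$.
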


 \begin{proof}
  {\bf (1).} Denote $G:=\rm{Gal}(E/F)$. By class field theory we know that 
  \begin{equation}
   \Delta_{E/F}=\begin{cases}
                 \omega_{E'/F} & \text{when $\rm{rk}_2(G)=1$}\\
                 1 & \text{when $\rm{rk}_2(G)=0$},
                \end{cases}
  \end{equation}
where $E'/F$ is a uniquely determined quadratic extension inside $E/F$, and $\omega_{E'/F}$ is the quadratic character of $F^\times$
which corresponds to the extension $E'/F$ by class field theory.

When $m$ is odd, i.e., $\rm{rk}_2(G)=0$, hence $\Delta_{E/F}\cong 1$.
So for odd case, the assertion (1) is obvious. 

When $m$ is even, we choose two different totally ramified cyclic subextensions, namely $L_1/F, \;L_2/F$, in $K/F$ of degree $m$.
Then we can write for all $\epsilon\in U_F$,
$$\Delta_{L_1/F}(\epsilon)=\omega_{E'/F}(\epsilon)
=\eta(\epsilon)\cdot\omega_{E'/F}(\epsilon)=\omega_{E'/F}(\epsilon)=\Delta_{L_2/F}(\epsilon),$$
where $\eta$ is the unramified quadratic character of $F^\times$. This proves that $\Delta_{E/F}$ does not depend on $E$ if we restrict 
to $U_F$.\\
{\bf (2).} From Proposition \ref{Proposition arithmetic form of determinant} we know that 
$\det(\rho)(x)=\Delta_{E/F}(x)\cdot \chi_K\circ N_{K/E}^{-1}(x)$ for all $x\in F^\times$.
Let $E_1/F$ be the unramified subextension in $K/F$ of degree $m$. Then we have $EE_1=K$ and 
$$N_{K/E}|_{E_1}=N_{E_1/F}, \qquad (E_1^\times)_{F}\subseteq K_E^\times\subset\rm{Ker}(\chi_K).$$
Moreover $U_F\subset\cN_{E_1/F}$ and therefore we may write $N_{K/E}^{-1}(\epsilon)=N_{E_1/F}^{-1}(\epsilon)$
for all $\epsilon\in U_F$. Now we can write:
\begin{align*}
 L(\psi,\epsilon c)
 &=\det(\rho)(\epsilon c)q_F^{-\frac{1}{2}}\sum_{x\in k_F^\times}(\chi_K\circ N_{E_1/F}^{-1})^{-1}(x)\cdot (c^{-1}\psi)(mx/\epsilon)\\
 &=\Delta_{E/F}(\epsilon)\chi_K\circ N_{K/E}^{-1}(\epsilon)\det(\rho)(c)q_F^{-\frac{1}{2}}
 \sum_{x\in k_F^\times}(\chi_K\circ N_{E_1/F}^{-1})^{-1}(\epsilon x)\cdot (c^{-1}\psi)(mx)\\
 &=\Delta(\epsilon)\chi_K\circ N_{E_1/F}^{-1}(\epsilon\epsilon^{-1})\det(\rho)(c)q_F^{-\frac{1}{2}}
 \sum_{x\in k_F^\times}(\chi_K\circ N_{E_1/F}^{-1})^{-1}(x)\cdot (c^{-1}\psi)(mx)\\
 &=\Delta(\epsilon)\cdot \det(\rho)(c)q_F^{-\frac{1}{2}}
 \sum_{x\in k_F^\times}(\chi_K\circ N_{E_1/F}^{-1})^{-1}(x)\cdot (c^{-1}\psi)(mx)\\
 &=\Delta(\epsilon)\cdot L(\psi,c).
\end{align*}
This implies that 
$$\Delta_{E/F}(\epsilon c)\cdot L(\psi,\epsilon c)=\Delta(\epsilon)^2\Delta_{E/F}(c)\cdot L(\psi,c)=\Delta_{E/F}(c)L(\psi,c).$$
{\bf (3).} By the definition of $R(\psi,c)$ we can write:
\begin{align*}
 R(\psi,\epsilon c)
 &=\lambda_{E/F}(\psi)\Delta_{E/F}(\epsilon c)=\lambda_{E/F}(\psi)\Delta_{E/F}(\epsilon)\Delta_{E/F}(c)\\
 &=\Delta(\epsilon)\lambda_{E/F}(\psi)\Delta_{E/F}(c)=\Delta(\epsilon)\cdot R(\psi,c).
\end{align*}
 \end{proof}

Now we are in a position to give a proof of Theorem \ref{invariant formula for minimal conductor representation} by using Lemma 
\ref{Lemma 5.2.17}.

\begin{proof}[{\bf Proof of Theorem \ref{invariant formula for minimal conductor representation}}]
By the given conditions: 
 $\rho=\rho(X,\chi_K)$ is a minimal conductor 
 Heisenberg representation of the absolute Galois group $G_F$ of a local field $F/\bbQ_p$ of dimension
 $m$ which is prime to $p$. This means we are in the situation: $\rho=\rho(X,\chi_K)=\rho(X_\eta,\chi_K)$,
 where $\eta$ is a character of $U_F/U_F^1$, and $\rm{dim}(\rho)=\#\eta=m$.
 
 Since $\rho$ is of minimal conductor, we have $a(\rho_0)=m$. Then from Remark \ref{Remark 5.1.22} we have $a(\chi_K)=1$.
 
 Now we choose $E/F\subset K/F$ a totally ramified cyclic subextension of degree $[E:F]=m$, hence $k_E=k_F$
 the same residue fields, and $K/E$ is unramified of degree $m$. Then we can write $\rho=\rm{Ind}_{E/F}(\chi_E)$, and 
 $a(\chi_E)=1$. Again, from Proposition \ref{Proposition arithmetic form of determinant} we have 
 $$\det(\rho)(x)=\Delta_{E/F}(x)\cdot \chi_K\circ N_{K/E}^{-1}(x)\quad\text{for all $x\in F^\times$}.$$
Then for all $x\in F^\times$, we can write 
$$\chi_K\circ N_{K/E}^{-1}(x)=\chi_E(x)=\Delta_{E/F}(x)\cdot \det(\rho)(x).$$
This is true for all subextensions\footnote{In $K/F$ of type $\bbZ_m\times\bbZ_m$ any cyclic subextension $E/F$ in $K/F$
of degree $m$ will correspond to a maximal isotropic subgroup. But we restrict to choosing $E$
totally ramified or unramified.} $E/F$ in $K/F$ which are cyclic of degree $m$.

Now we come to in our particular choice: $\rho=\rm{Ind}_{E/F}(\chi_E)$, with $a(\chi_E)=1$ and $E/F$ is totally ramified.
We can write 
\begin{align*}
 W(\rho,\psi)
 &=W(\rm{Ind}_{E/F}(\chi_E),\psi)=\lambda_{E/F}(\psi)\cdot W(\chi_E,\psi\circ\rm{Tr}_{E/F})\\
 &=\lambda_{E/F}(\psi)\cdot q_E^{-\frac{1}{2}}\chi_E(c_E)\sum_{x\in U_E/U_E^1}\chi_E^{-1}(x)(c_E^{-1}\psi\circ\rm{Tr}_{E/F})(x),
\end{align*}
 where $v_E(c_E)=1+n(\psi\circ\rm{Tr}_{E/F})=e_{E/F}(1+n(\psi))$. This implies that we can choose $c_F\in F^\times$ such that 
 $\nu_F(c_F=c_E)=1+n(\psi)$. 
 Let $E_1/F$ be the unramified subextension in $K/F$, then for each $\epsilon\in U_F$, we have 
 $N_{K/E}^{-1}(\epsilon)=N_{E_1/F}^{-1}(\epsilon)$ where $N_{E_1/F}:=N_{K/E}|_{E_1}$. Since $E/F$ is totally ramified, we have 
 $q_E=q_F$. And when $x\in F^\times$, we have $\rm{Tr}_{E/F}(x)=mx$.
 
 Then the above formula rewrites:
 \begin{align*}
  W(\rho,\psi)
  &=\lambda_{E/F}(\psi)\cdot q_F^{-\frac{1}{2}}\chi_K\circ N_{K/E}^{-1}(c_F)\sum_{x\in k_F^\times}
  (\chi_K\circ N_{K/E}^{-1})^{-1}(x)(c_F^{-1}\psi)(mx)\\
  &=\lambda_{E/F}(\psi)\cdot q_F^{-\frac{1}{2}}\Delta_{E/F}(c_F)\det(\rho)(c_F)\sum_{x\in k_F^\times}
  (\chi_K\circ N_{E_1/F}^{-1})^{-1}(x)(c_F^{-1}\psi)(mx)\\
   &=\lambda_{E/F}(\psi)\Delta_{E/F}(c_F)\cdot\left(\det(\rho)(c_F)q_F^{-\frac{1}{2}}\sum_{x\in k_F^\times}
  (\chi_K\circ N_{E_1/F}^{-1})^{-1}(x)(c_F^{-1}\psi)(mx)\right)\\
  &=R(\psi,c)\cdot L(\psi,c),
 \end{align*}
where $c_F=c\in F^\times$ with $\nu_F(c)=1+n(\psi)$,
$R(\psi,c)=\lambda_{E/F}(\psi)\Delta_{E/F}(c)$, and 
$$L(\psi,c)=\det(\rho)(c_F)q_F^{-\frac{1}{2}}\sum_{x\in k_F^\times}
  (\chi_K\circ N_{E_1/F}^{-1})^{-1}(x)(c^{-1}\psi)(mx).$$
Now it is clear that $L(\psi,c)$ depends on $c$ but not on the totally ramified cyclic extension $E/F$ which we have chosen.

Again we know that $\lambda_{E/F}(\psi)$ is a fourth root of unity and $\Delta_{E/F}(c)\in\{\pm 1\}$. Therefore it is easy to see 
that $R(\psi,c)$ is a fourth root of unity. So to call our expression 
$$W(\rho,\psi)=R(\psi,c)\cdot L(\psi,c)$$
is invariant, we are left to show $R(\psi,c)$ does not depend on the the totally ramified cyclic subextension $E/F$ in $K/F$.

Moreover, we can write (cf. Lemma 3.2 of \cite{SAB1}) here 
$$R(\psi,c)=\lambda_{E/F}(\psi)\Delta_{E/F}(c)=\lambda_{E/F}(c\psi)=\lambda_{E/F}(\psi'),$$
where $\psi'=c\psi$, hence $n(\psi')=\nu_F(c)+n(\psi)=1+n(\psi)+n(\psi)=2n(\psi)+1$.

When $m(=[E:F])$ is odd, we have $\lambda_{E/F}(\psi')=1$, hence $R(\psi,c)=\lambda_{E/F}(c\psi)=1$. Thus in the odd case 
$R(\psi,c)$ is independent of the choice of the totally ramified subextension $E/F$ in $K/F$.

When $m$ is even, we have 
\begin{align*}
 R(\psi,c)
 &=\lambda_{E/F}(\psi')=\lambda_{E/E'}(\psi'')\cdot \lambda_{E'/F}^{[E:E'']}\\
 &=\lambda_{E'/F}(\psi')^{\pm 1},
\end{align*}
where $[E',F]$ is the $2$-primary part of $m$, hence $[E:E']$ is odd. Here the sign only depends on 
$m$ but not on $E$. So we can restrict to the case where $m=[E:F]$ is a power of $2$. Let $E_2/F$ be the unique quadratic subextension
in $E/F$. Since $E/F$ is a cyclic tame extension, from Theorem \ref{Theorem 2.5}, we obtain:
\begin{equation}
 \lambda_{E/F}(\psi')=\begin{cases}
                       \lambda_{E_2/F}(\psi') & \text{if $[E:F]\ne 4$}\\
                       \beta(-1)\cdot\lambda_{E_2/F}(\psi') & \text{if $[E:F]=4$},
                      \end{cases}
\end{equation}
where $\beta$ is the character of $F^\times/\cN_{E/F}$ of order $4$.

Since here $n(\psi')=2 n(\psi)+1$ is {\bf odd}\footnote{If $n(\psi')$ is even, 
then from the table of the Remark 5.10 of \cite{SAB1}, $\lambda_{E_2/F}(\psi')=-\lambda_{E_2'/F}(\psi')$,
where $E_2'/F$ be the totally ramified quadratic extension different from $E_2/F$.
Therefore $\lambda_{E/F}(\psi')$ depends on $\psi'$.},
from Remark 5.10 of \cite{SAB1}
we can tell that $\lambda_{E_2/F}(\psi')$ is invariant.\\
Finally, we have to see that $\beta(-1)$ does not depend on $E$ if $[E:F]=4$. 

Since $E/F$ is totally ramified of degree $4$, 
we have $F^\times=U_F\cdot N$, hence $F^\times/N=U_F N/N=U_F/U_F\cap N\cong\bbZ_4$, where $N=N_{E/F}(E^\times)$.
Again $U_F^1\subset U_F$, and $U_F^1\subset N$, hence $U_{F}^{1}\subset N\cap U_F\subset U_F$. We know that 
$U_F/U_F^1$ is a cyclic group. Therefore $N\cap U_F$ is determined by its index in $U_F$, which does not 
depend on $E$. Hence,
$U_F\cap N$ does not depend on $E$.

We also know that there are two characters of $U_F/U_F\cap N$ of order $4$, and they are inverse to each other. Then 
$$\beta(-1)=\beta(-1)^{-1}=\beta^{-1}(-1)$$ 
is the same in both cases. 
Since 
$\beta$ is the character which corresponds to $E/F$ by class field theory, we can say $\beta$ is a character of 
$F^\times/U_F^1$, hence $a(\beta)=1$. It clearly shows that $\beta(-1)$ does not depend on $E$. So we can conclude that 
$R(\psi,c)$ does not depend on $E$.



Thus our expression $W(\rho,\psi)=R(\psi,c)\cdot L(\psi,c)$ does not depend on the choice of the totally ramified
cyclic subextension $E/F$ in $K/F$. Moreover we notice that we have the transformation rules 
$$R(\psi,\epsilon c)=\Delta(\epsilon)R(\psi,c),\qquad L(\psi,\epsilon c)=\Delta(\epsilon)L(\psi,c),$$
for all $\epsilon\in U_F$. Again $\Delta(\epsilon)^2=1$, hence the product $R(\psi,\epsilon c)\cdot L(\psi,\epsilon c)=
R(\psi,c)\cdot L(\psi,c)=W(\rho,\psi)$ does not depend on the choice of $c$.

Therefore, finally, we can conclude our formula $W(\rho,\psi)=R(\psi,c)L(\psi,c)$ is an invariant expression.

\end{proof}

Now let $\rho=\rho(X,\chi_K)$ be a Heisenberg representation of dimension prime to $p$ but the conductor of $\rho$
is {\bf not} minimal. In the following theorem we give an invariant formula of $W(\rho,\psi)$.

\begin{thm}\label{Theorem invariant for non minimal representation}
 Let $\rho=\rho(X_\rho,\chi_K)$ be a Heisenberg representation of the absolute Galois group $G_F$ of a local field $F/\bbQ_p$
 of dimension $m$ prime to $p$. Let $\psi$ be a nontrivial additive character of $F$. 
 Suppose that the conductor of $\rho$ is not minimal, $\rho=\rho_0\otimes\widetilde{\chi_F}$ and $a(\rho)=m\cdot a(\chi_F)$, 
 where $\widetilde{\chi_F}:W_F\to\bbC^\times$ corresponds to $\chi_F:F^\times\to\bbC^\times$, and 
 $h=a(\chi_F)\ge 2$.\\
 {\bf Case-1:} If $m$ is odd, then
 \begin{enumerate}
  \item when $1+m(h-1)=2d$ is even, we have
  $$W(\rho,\psi)=\det(\rho)(c) \psi(mc^{-1}),$$
  \item when $1+m(h-1)=2d+1$ is odd, we have
  $$W(\rho,\psi)=\det(\rho)(c)\cdot H(\psi,c),$$
 \end{enumerate}
 where 
 $$H(\psi,c)=q_F^{-\frac{1}{2}}\sum_{y\in U_F^{h'}/U_F^{h'+1}}(\chi_K\circ N_{E_1/F}^{-1})^{-1}(y)(c^{-1}\psi)(my),$$
 and $h'=[\frac{h}{2}]$, where $[x]$ denotes the largest integer $\le x$.\\
 {\bf Case-2:} If $m$ is even, then 
 \begin{enumerate}
  \item when $h$ is odd, we have 
  $$W(\rho,\psi)=R(\psi,c)\cdot\det(\rho)(c)\cdot H(\psi,c),$$
  where $H(\psi,c)$ is the same as in Case-1(2).
  \item when $h$ is even, we have 
  $$W(\rho,\psi)=R(\psi,c)\cdot\det(\rho)(c)\cdot q_F^{\frac{1}{2}}\cdot\psi(c^{-1}m),$$
 \end{enumerate}
 where $R(\psi,c)=\lambda_{E/F}(\psi)\cdot \Delta_{E/F}(c)$.\\
Here $E_1/F$ is the maximal unramified subextension in $K/F$, and 
$E/F$ is a totally ramified cyclic subextension in $K/F$ and $c\in F^\times$ with $\nu_F(c)=h+n(\psi)$, and 
$$\chi_F(1+x)=\psi(x/c),\qquad \text{for all $x\in P_F^{h-h'}/P_F^h$}.$$
\end{thm}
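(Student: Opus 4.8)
The plan is to reduce the computation to an abelian local constant and then evaluate the resulting Gauss sum by stationary phase. I would choose a totally ramified cyclic subextension $E/F$ of degree $m$ inside $K/F$ (which corresponds to a maximal isotropic subgroup, exactly as in Theorem \ref{invariant formula for minimal conductor representation}), so that $\rho=\mathrm{Ind}_{E/F}(\chi_E)$ and, by inductivity of the local constant,
$$W(\rho,\psi)=\lambda_{E/F}(\psi)\cdot W(\chi_E,\psi_E),\qquad \psi_E:=\psi\circ\mathrm{Tr}_{E/F}.$$
First I would pin down the conductor of $\chi_E$. Since $E/F$ is tame and totally ramified, $f_{E/F}=1$ and $d_{E/F}=m-1$; feeding $a_F(\rho)=m\cdot a(\chi_F)=mh$ into (\ref{eqn 5.1.24}) (equivalently into Proposition \ref{Proposition 5.1.20}) gives
$$a(\chi_E)=a_F(\rho)-d_{E/F}=mh-(m-1)=1+m(h-1),$$
which is precisely the integer whose parity governs the case division of the theorem.

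Next I would write $W(\chi_E,\psi_E)$ via the explicit abelian formula (\ref{eqn 2.9}) and evaluate the Gauss sum over $U_E/U_E^{a(\chi_E)}$ by stationary phase. Because $\rho=\rho_0\otimes\widetilde{\chi_F}$ with $h=a(\chi_F)\ge 2$ dominating the minimal conductor of $\rho_0$, the wild part of $\chi_E$ is carried by $\chi_F\circ N_{E/F}$, and on base elements of $F$ at the relevant depth both $N_{E/F}$ and $\mathrm{Tr}_{E/F}$ act as multiplication by $m$ (this is what produces the arguments $mc^{-1}$ and $(c^{-1}\psi)(my)$). Writing each $x\in U_E$ as $x=x_0(1+u)$ with $u$ in the upper half of the filtration, additivity of $\chi_E^{-1}$ on $1+u$ converts the inner sum to $\sum_u\psi_E\!\big((x_0/c_E-\beta)u\big)$, where $\beta$ is the stationary point fixed by the condition $\chi_F(1+x)=\psi(x/c)$ on $P_F^{h-h'}/P_F^h$ with $h'=[h/2]$. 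When the parity pins $x_0$ completely, the outer sum collapses to the single value, which on the base reads $\det(\rho)(c)\,\psi(mc^{-1})$, with a lone surviving factor $q_F^{1/2}$ in the $m$ even, $h$ even subcase; when one residual direction of size $q_F$ remains it survives as the classical conductor-one Gauss sum $H(\psi,c)$ over $k_F^\times$. Here I would use Proposition \ref{Proposition arithmetic form of determinant} in the form $\chi_E(x)=\Delta_{E/F}(x)\cdot\det(\rho)(x)$ for $x\in F^\times$, together with the Explicit Lemma \ref{Explicit Lemma}, to replace $\chi_E$ on the residue level by $\chi_K\circ N_{E_1/F}^{-1}$ and to produce the factor $\det(\rho)(c)$.

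It then remains to treat the prefactor $\lambda_{E/F}(\psi)$ and to verify independence of $E$. When $m$ is odd, $\mathrm{Gal}(E/F)\cong\bbZ_m$ has odd order, so $\lambda_{E/F}(\psi)=1$ by Theorem \ref{General Theorem for odd case} (after the twist $\psi\mapsto c\psi$, exactly as in the proof of Theorem \ref{invariant formula for minimal conductor representation}), giving Case~1. When $m$ is even I would absorb the $\Delta_{E/F}(c)$ coming from $\chi_E(c)=\Delta_{E/F}(c)\det(\rho)(c)$ into the prefactor to form $R(\psi,c)=\lambda_{E/F}(\psi)\Delta_{E/F}(c)$, a fourth root of unity, and invoke the invariance argument of Lemma \ref{Lemma 5.2.17} and Theorem \ref{invariant formula for minimal conductor representation} to conclude that $R(\psi,c)$ does not depend on the chosen $E$; this gives Case~2. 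The main obstacle I anticipate is the stationary-phase step itself: correctly identifying the stationary point as the datum carried by $\chi_F$ on $F$, transporting it through the tame norm and trace (which reduce to multiplication by $m$ at the relevant level), and tracking the powers of $q_F$ so that the extra factor $q_F^{1/2}$ appears in exactly the $m$ even, $h$ even subcase and nowhere else, while the residual rank-one Gauss sum $H(\psi,c)$ appears exactly in the two odd subcases.
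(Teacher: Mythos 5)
Your skeleton coincides with the paper's own proof: pick a totally ramified cyclic subextension $E/F$ of degree $m$ in $K/F$, write $\rho=\rm{Ind}_{E/F}(\chi_E)$ so that $W(\rho,\psi)=\lambda_{E/F}(\psi)\,W(\chi_E,\psi_E)$, compute $a(\chi_E)=mh-d_{E/F}=1+m(h-1)$, evaluate $W(\chi_E,\psi_E)$ by stationary phase at the point $c$ determined by $\chi_F$ (this is exactly the Lamprecht--Tate formula the paper cites), identify $\chi_E$ on $F^\times$ with $\Delta_{E/F}\cdot\det(\rho)$ via Proposition \ref{Proposition arithmetic form of determinant}, and treat $\lambda_{E/F}$ and the invariance of $R(\psi,c)$ exactly as in Theorem \ref{invariant formula for minimal conductor representation}. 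The genuine gap is the step you defer as ``the main obstacle'': converting the residual sum, which lives at the level of $E$, into the $F$-level data of the statement. Whenever $a(\chi_E)=1+m(h-1)$ is odd --- that is, in Case-1(2) and in \emph{all} of Case-2, including $h$ even --- stationary phase leaves a sum over $A_E:=U_E^d/U_E^{d+1}$ (with $d=m(h-1)/2$), a group of order $q_E=q_F$. The paper's Steps 2--3 handle this by the $\rm{Gal}(E/F)$-module decomposition $A_E\cong A_E^{\rm{Gal}(E/F)}\times I_{E/F}A_E$: computing $U_E^d\cap F^\times$ shows the fixed part is $U_F^{h'}/U_F^{h'+1}$ (order $q_F$) when $h$ is odd and trivial when $h$ is even, while on the augmentation part both $\chi_E$ and $\rm{Tr}_{E/F}$ are trivial (here one needs $\chi_E|_{U_E^1}=(\chi_F\circ N_{E/F})|_{U_E^1}$, so $\chi_E$ kills norm-trivial elements). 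Hence for $h$ odd the residual sum is precisely $H(\psi,c)$, and for $h$ even (so $m$ even) the summand is constant on $A_E$, giving $q_F\cdot\psi(mc^{-1})$ and, after the $q_F^{-1/2}$ normalization, the factor $q_F^{1/2}$.

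Without this analysis your case division cannot be established, and the heuristic you offer for it is incorrect: you attribute the $(m$ even, $h$ even$)$ subcase to ``the parity pins $x_0$ completely,'' but there $a(\chi_E)$ is odd, so the residual direction of size $q_F$ \emph{does} remain; if the sum truly collapsed to a single term there would be no factor $q_F^{1/2}$ at all (compare Case-1(1), where $a(\chi_E)$ is even and no such factor occurs). The mechanism is not collapse but constancy of the summand on $A_E$, which is a Galois-module fact, not a parity-of-$x_0$ fact. A second, smaller, slip: $H(\psi,c)$ is not ``the classical conductor-one Gauss sum over $k_F^\times$''; it is a sum over $U_F^{h'}/U_F^{h'+1}\cong k_F^{+}$ with $q_F$ terms (note $h'\geq 1$ since $h\geq 2$); the genuine $k_F^\times$-Gauss sum occurs only in the minimal-conductor Theorem \ref{invariant formula for minimal conductor representation}, which you appear to have conflated with the present situation. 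So the plan follows the paper's route in outline, but the decisive mechanism that produces $H(\psi,c)$ versus $q_F^{1/2}\psi(mc^{-1})$ in the correct subcases is missing, and as written the proof does not go through.
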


\begin{proof}
{\bf Step-1:}
 By the given condition, $\rm{dim}(\rho)=m$ prime to $p$, and the Artin conductor 
 $a_F(\rho)=m h$ where $h\ge 2$, then from Lemma \ref{Lemma general conductor},
 we have $a(\chi_E)=mh-d_{E/F}=mh-m+1=1+m(h-1)$, where $E/F$ is a totally ramified cyclic subextension in $K/F$,
 and $\rho=\rm{Ind}_{E/F}(\chi_E)$.
 
Since by the given condition $\rho$ is not minimal conductor, we can write 
 \begin{equation}\label{eqn 55}
  \rho=\rho_0\otimes\widetilde{\chi_F},
 \end{equation}
 where $\rho=\rho_0(X,\chi_0)$ is a minimal conductor Heisenberg representation of dimension $m$, and 
 $\widetilde{\chi_F}: W_F\to\bbC^\times$ corresponds to $\chi_F:F^\times\to\bbC^\times$ by class field theory.\\
 Then we have $X_\rho=X_\eta$ for $\eta:U_F/U_F^1\to\bbC^\times$, $\#\eta=m$ and:
 $$\rho_0=\rm{Ind}_{E/F}(\chi_{E,0})\qquad\rho=\rm{Ind}_{E/F}(\chi_E),$$
 where $E/F$ is a cyclic totally ramified extension of degree $m$.\\
 Because of (\ref{eqn 55}) we may assume now that
 \begin{equation}\label{eqn 66}
  \chi_E=\chi_{E,0}\cdot (\chi_F\circ N_{E/F}), \quad a(\chi_{E,0})=1,\quad a(\chi_E)=a(\chi_F\circ N_{E/F})=1+m(h-1).
 \end{equation}
  From the first and second of the equalities (\ref{eqn 66}) we deduce 
 \begin{equation}\label{eqn 77}
  \chi_E|_{U_E^1}=(\chi_F\circ N_{E/F})|_{U_E^1}, \quad N_{E/F}(U_E^1)=U_F^1,
 \end{equation}
where the second equality holds because $E/F$ is totally ramified, and it implies that 
conversely $\chi_E|_{U_E^1}$ determines $\chi_F|_{U_F^1}$.\\
{\bf Step-2:} 
 Now for $d\ge 1$
we put:
$$A_E:=U_E^d/U_E^{d+1},$$
which we consider as a $\rm{Gal}(E/F)$-module. We also know that $A_E/I_{E/F}A_E\cong A_E^{\rm{Gal}(E/F)}$, where $I_{E/F}A_E$ is the 
augmentation with respect to the extension $E/F$.

We also know that for any finite extension $E/F$, we have 
 \begin{equation}\label{eqn intersection}
  U_E^d\cap F^\times=\begin{cases}
                      U_{F}^{\frac{d}{e_{E/F}}} & \text{if $e_{E/F}$ divides $d$}\\
                      U_{F}^{[\frac{d}{e_{E/F}}]+1} & \text{if $e_{E/F}$ does not divide $d$}.
                     \end{cases}
 \end{equation}
Again we also have 
$$A_E^{\rm{Gal}(E/F)}=U_E^n/U_E^{n+1}\cap F^\times=U_E^{d}\cap F^\times/U_E^{d+1}\cap F^\times.$$

{\bf Step-3:}
If $1+m(h-1)=2d+1$, then $\frac{d}{m}=\frac{h-1}{2}$. Let $h':=[\frac{h}{2}]$. If $A_E=U_E^d/U_E^{d+1}$, and $h$ is odd, then we have:
$$U_E^{d}\cap F^\times/U_E^{d+1}\cap F^\times=U_F^{\frac{h-1}{2}}/U_F^{\frac{h-1}{2}+1}=U_F^{h'}/U_F^{h'+1},$$
and if $h$ is even, hence $2$ does not divide $h-1$, then we can write 
$$U_E^{d}\cap F^\times/U_E^{d+1}\cap F^\times=U_F^{[\frac{h-1}{2}]+1}/U_F^{[\frac{h-1}{2}]+1}\cong\{1\}.$$
Since $A_E^{\rm{Gal}(E/F)}\cong A_E/I_{E/F}A_E$, we can uniquely write any element $x\in U_E^d/U_E^{d+1}$ as 
$x=yz$ where $y\in A_E^{\rm{Gal}(E/F)}$ and $z\in I_{E/F}A_E$. We also know that $U_E^d/U_E^{d+1}\cong k_E$, hence 
$|A_E|=|A_E^{\rm{Gal}(E/F)}|\cdot|I_{E/F}A_E|=q_E=q_F$. We also observe that when $h$ is even, we have 
$A_E^{\rm{Gal}(E/F)}\cong\{1\}$, hence $|A_E|=|I_{E/F}A_E|=q_F$. And when $h$ is odd, we have 
$A_E^{\rm{Gal}(E/F)}=U_F^{h'}/U_F^{h'+1}$, and hence $|A_E^{\rm{Gal}(E/F)}|=q_F$.
So this implies $|I_{E/F}A_E|=1$.

Now set:
$$S(\psi,c):=\sum_{x\in A_E}\chi_E^{-1}(x)(c^{-1}\psi)(\rm{Tr}_{E/F}(x)).$$
Then we can write
\begin{align*}
 S(\psi,c)
 &=\sum_{y\in A_E^{\rm{Gal}(E/F)},\;z\in I_{E/F}A_E}\chi_E^{-1}(yz)\cdot (c^{-1}\psi)(\rm{Tr}_{E/F}(yz))\\
 &=\sum_{y\in A_E^{\rm{Gal}(E/F)}}\sum_{z\in I_{E/F}A_E}\chi_E^{-1}(yz)(c^{-1}\psi)(\rm{Tr}_{E/F}(yz))\\
 &=|I_{E/F}A_E|\cdot \sum_{y\in A_E^{\rm{Gal}(E/F)}}\chi_E^{-1}(y)(c^{-1}\psi)(my)\\
 &=|I_{E/F}A_E|\cdot\sum_{y\in A_{E}^{\rm{Gal}(E/F)}}(\chi_K\circ N_{E_1/F}^{-1})^{-1}(y)(c^{-1}\psi)(my)\\
 &=\begin{cases}
    \sum_{y\in U_F^{h'}/U_{F}^{h'+1}}(\chi_K\circ N_{E_1/F}^{-1})^{-1}(y)(c^{-1}\psi)(my) & \text{when $h$ is odd}\\
    q_F\cdot (c^{-1}\psi)(m)=q_F\cdot\psi(mc^{-1}) & \text{when $h$ is even},
   \end{cases} 
\end{align*}
since $\chi_E(yz)=\chi_E(y)$ and $\rm{Tr}_{E/F}(yz)=y\rm{Tr}_{E/F}(z)=ym$.

{\bf Step-4:} Again, we have
 $\rho=\rm{Ind}_{E/F}(\chi_E)$. Then 
 $$W(\rho,\psi)=W(\rm{Ind}_{E/F}(\chi_E),\psi)=\lambda_{E/F}(\psi)\cdot W(\chi_E,\psi\circ\rm{Tr}_{E/F}).$$
 {\bf Case-1: Suppose that $m$ is odd:}\\
{\bf (1) When $a(\chi_E)=1+m(h-1)=2d$:} In this situation, $h$ must be even and we take $h=2h'$, hence $d=mh'-\frac{m-1}{2}$.
Since $m(h'-1)<d\le mh'$, we have  $P_E^d\cap F=P_F^{h'}$. 
Now we choose $c\in F^\times$ such that 
\begin{equation}\label{eqn 991}
 \chi_F(1+y)=\psi(c^{-1}y), \quad\text{for all $y\in P_F^{h-h'}/P_F^h$},
\end{equation}
hence $\nu_F(c)=a(\chi_F)+n(\psi)=h+n(\psi)$.
Now if we take an element 
$y_E\in P_E^{a(\chi_E)-d}=P_E^{d}$, then 
$\rm{Tr}_{E/F}(y_E)\in P_F^{h'}=P_F^{h-h'}$ because $m(h'-1)<d\le mh'=m(h-h')$.  Since $E/F$ is cyclic, 
from Proposition 1.1 on p. 68 of \cite{FV}, we have:
$$N_{E/F}(1+y_E)=1+\rm{Tr}_{E/F}(y_E)+N_{E/F}(y_E)+\rm{Tr}_{E/F}(\delta),$$
where $\nu_E(\delta)\ge 2d=a(\chi_E)$.
Then for all $y_E\in P_E^{a(\chi_E)-d}/P_E^{a(\chi_E)}$, we can write 
\begin{align}\label{eqn 1001}
 \chi_E(1+y_E)\nonumber
 &=\chi_F\circ N_{E/F}(1+y_E)=\chi_F(1+\rm{Tr}_{E/F}(y_E))\\
 &=\psi(c^{-1}\rm{Tr}_{E/F}(y_E))=(c^{-1}\psi_E)(y_E),
\end{align}
because $N_{E/F}(y_E)+\rm{Tr}_{E/F}(\delta)\in P_F^{h}$. This verifies that our choice of $c$ is right for applying Lamprecht-Tate 
formula for $W(\chi_E,\psi_E)$.

Now we apply Lamprecht-Tate formula (cf. Theorem 6.1.1 and its Corollary of \cite{SAB2}) and we obtain:
$$W(\chi_E,\psi_E)=\chi_E(c)\cdot (c^{-1}\psi_E)(1)=\Delta_{E/F}(c)\det(\rho)(c)\psi(mc^{-1}).$$
Therefore
\begin{align*}
 W(\rho,\psi)
 &=\lambda_{E/F}(\psi)\cdot W(\chi_E,\psi_E)\\
 &=\lambda_{E/F}(\psi)\cdot \Delta_{E/F}(c)\det(\rho)(c)\psi(mc^{-1})\\
 &=R(\psi,c)\cdot\det(\rho)(c)\cdot\psi(mc^{-1})\\
 &=\det(\rho)(c)\cdot\psi(mc^{-1}),
\end{align*}
where $R(\psi,c)=\lambda_{E/F}(\psi)\Delta_{E/F}(c)=\lambda_{E/F}(c\psi)=1$ because $E/F$ is an odd degree Galois extension.\\
{\bf (2). When $a(\chi_E)=1+m(h-1)=2d+1$:}
Since $m$ is odd, here $h$ must be odd. Let $h':=[\frac{h}{2}]$. Then from Step-3 we have 
$A_E^{\rm{Gal}(E/F)}=U_F^{h'}/U_{F}^{h'+1}$. Now if we choose $c\in F^\times$ such that 
$$\chi_F(1+y)=\psi(c^{-1}y),\qquad\text{for all $y\in P_F^{h-h'}/P_F^{h}$}.$$
Then this $c$ also satisfies the following relation
$$\chi_E(1+y_E)=\psi_E(c^{-1}y_E),\qquad \text{for all $y_E\in P_E^{a(\chi_E)-d}/P_E^{a(\chi_E)}$},$$
because $d=\frac{m(h-1)}{2}$, and hence $m(h'-1)<d\le mh'$.
Then by Lamprecht-Tate formula we have 
\begin{align*}
 W(\chi_E,\psi_E)
 &=\chi_E(c)\psi_E(c^{-1})q_E^{-\frac{1}{2}}\sum_{x\in P_E^d/P_E^{d+1}}\chi_E^{-1}(1+x)\cdot(c^{-1}\psi_E)(x)\\
 &=\chi_E(c)\cdot q_F^{-\frac{1}{2}}\sum_{x\in U_E^d/U_E^{d+1}}\chi_E^{-1}(x)\cdot (c^{-1}\psi)(\rm{Tr}_{E/F}(x))\\
 &=\Delta_{E/F}(c)\det(\rho)(c)q_F^{-\frac{1}{2}}
 \sum_{y\in U_F^{h'}/U_F^{h'+1}}(\chi_K\circ N_{E_1/F}^{-1})^{-1}(y)(c^{-1}\psi)(my),
\end{align*}
because $h$ is odd, and we use Step-3. 
Thus we obtain
\begin{align*}
 W(\rho,\psi)
 &=W(\rm{Ind}_{E/F}(\chi_E),\psi)=\lambda_{E/F}(\psi)\cdot W(\chi_E,\psi_E)\\
 &=\lambda_{E/F}(\psi)\cdot \Delta_{E/F}(c)\det(\rho)(c)q_F^{-\frac{1}{2}}
 \sum_{y\in U_F^{h'}/U_F^{h'+1}}(\chi_K\circ N_{E_1/F}^{-1})^{-1}(y)(c^{-1}\psi)(my).\\
 &=R(\psi,c)\cdot\det(\rho)(c)q_F^{-\frac{1}{2}}
 \sum_{y\in U_F^{h'}/U_F^{h'+1}}(\chi_K\circ N_{E_1/F}^{-1})^{-1}(y)(c^{-1}\psi)(my).\\
 &=\det(\rho)(c)q_F^{-\frac{1}{2}}
 \sum_{y\in U_F^{h'}/U_F^{h'+1}}(\chi_K\circ N_{E_1/F}^{-1})^{-1}(y)(c^{-1}\psi)(my),
\end{align*}
because $m$ is odd, hence $R(\psi,c)=\lambda_{E/F}(c\psi)=1$.\\

{\bf Case-2: Suppose that $m$ is even.} If $m$ is even, then $1+m(h-1)=2d+1$ is always an odd number and $d=\frac{m(h-1)}{2}$.
But here $h$ could be any number $\ge 2$, i.e., $h$ is not fixed, and we put $h':=[\frac{h}{2}]$.
This implies $m(h'-1)<d\le mh'$ and $P_E^{d}\cap F=P_F^{h'}$. Now we take $c\in F^\times$ such that (\ref{eqn 991})
holds, and this again satisfies equation (\ref{eqn 1001}). Therefore we can use Lamprecht-Tate formula and
we have two cases:\\
\begin{enumerate}
 \item When $h$ is odd, we are in the same situation of Case-1(2), and we have 
 $$W(\rho,\psi)=R(\psi,c)\cdot\det(\rho)(c)\cdot H(\psi,c).$$
 \item When $h$ is even, from Step-3 we know that $A_E^{\rm{Gal}(E/F)}\cong\{1\}$ and  
 \begin{align*}
  \sum_{x\in A_E}\chi_E^{-1}(x)(c^{-1}\psi)(\rm{Tr}_{E/F}(x))
  &=q_F\cdot \psi(mc^{-1}).
 \end{align*}
Therefore in this situation we have 
\begin{align*}
 W(\rho,\psi)
 &=R(\psi,c)\cdot\det(\rho)(c)q_F^{-\frac{1}{2}}\cdot\sum_{x\in A_E}\chi_E^{-1}(x)(c^{-1}\psi)(\rm{Tr}_{E/F}(x))\\
 &=R(\psi,c)\cdot\det(\rho)(c)q_F^{-\frac{1}{2}}\cdot q_F\psi(mc^{-1})\\
 &=R(\psi,c)\cdot\det(\rho)(c)q_F^{\frac{1}{2}}\psi(mc^{-1}).
\end{align*}

\end{enumerate}
Furthermore, in the proof of Theorem \ref{invariant formula for minimal conductor representation}, we observe that 
$R(\psi,c)$ does not depend on $E$. Hence our above computations are invariant.

This completes the proof.
 
\end{proof}

By using following lemma, without using $\lambda$-function we also can
give invariant formula for  $W(\rho)$, when $\rm{dim}(\rho)$ is prime to $p$, for sufficiently large conductor character $\chi_F$.

\begin{lem}[{\bf Deligne-Henniart, \cite{BH}, p. 190, Proposition 29.4(4)}]\label{Lemma Deligne-Henniart}
 Let $F$ be a non-archimedean local field and $\psi$ be a nontrivial additive character of $F$. Let $\rho$ be a finite dimensional 
 representation of $G_F$. There is a sufficiently large integer $m_\rho$ such that if $\chi_F$ is a character of $F^\times$ of 
 conductor $a(\chi_F)\ge m_\rho$ , then 
 \begin{equation}\label{eqn DH}
  W(\rho\otimes\chi_F,\psi)=W(\chi_F,\psi)^{\rm{dim}(\rho)}\cdot \det(\rho)(c),
 \end{equation}
for any $c:=c(\chi_F,\psi)\in F^\times$
such that $\chi_F(1+x)=\psi(c^{-1}x)$, $x\in P_F^{[\frac{a(\chi_F)}{2}]+1}$.
\end{lem}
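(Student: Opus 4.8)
The plan is to prove this \emph{stability} identity by reducing it, through Brauer induction, to the rank-one case, where it becomes a stationary-phase evaluation of a Gauss sum. First I would normalize the problem. Since $W$ is multiplicative on (virtual) direct sums (equation (\ref{eqn 2.3})) and $-\otimes\chi_F$ is a ring operation, writing $\rho\otimes\chi_F=\sigma\otimes\chi_F+(\dim\rho)\cdot\chi_F$ with
\begin{equation*}
 \sigma:=\rho-(\dim\rho)\cdot\mathbf 1_{G_F}
\end{equation*}
shows that (\ref{eqn DH}) is equivalent to $W(\sigma\otimes\chi_F,\psi)=\det(\sigma)(c)$ for the dimension-$0$ virtual representation $\sigma$; here $\det(\sigma)=\det(\rho)$ because the trivial summand contributes the trivial character. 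Passing to dimension $0$ is the key simplification: for such objects $W$ is inductive with no $\lambda$-factor (equation (\ref{eqn 2.4})), i.e. $W(\mathrm{Ind}_{E/F}\tau,\psi)=W(\tau,\psi\circ\mathrm{Tr}_{E/F})$ whenever $\dim\tau=0$.

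The heart of the argument is the rank-one case. For a character $\mu$ of $F^\times$, Tate's formula (\ref{eqn 2.9}) expresses $W(\mu,\psi)$ as a normalized Gauss sum. Taking $\mu=\chi\cdot\chi_F$ with $a(\chi_F)$ much larger than $a(\chi)$, so that $a(\mu)=a(\chi_F)=:h$, I would evaluate this sum by the Lamprecht--Tate stationary-phase method: writing $\chi_F(1+x)=\psi(c^{-1}x)$ on $P_F^{[h/2]+1}$, the sum concentrates on the coset of the stationary point determined by $c$, and on that coset the low-conductor factor $\chi$ is constant and equal to $\chi(c)$. This yields the rank-one stability identity
\begin{equation}\label{rankone}
 W(\chi\chi_F,\psi)=\chi(c)\cdot W(\chi_F,\psi),
\end{equation}
valid once $a(\chi_F)$ is large enough relative to $a(\chi)$. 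I would also record the compatibility that makes the same $c$ serviceable over a finite extension $E/F$: since $N_{E/F}(1+y)\equiv 1+\mathrm{Tr}_{E/F}(y)$ in the relevant range, the character $\chi_F\circ N_{E/F}$ satisfies $(\chi_F\circ N_{E/F})(1+y)=(\psi\circ\mathrm{Tr}_{E/F})(c^{-1}y)$, so $c\in F^\times\subset E^\times$ is again a valid conductor element over $E$.

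Third, I would globalize via Brauer's theorem, which writes $\sigma$ as a $\bbZ$-linear combination of virtual representations of the shape $\tau_{E,\chi}:=\mathrm{Ind}_{E/F}(\chi)-\mathrm{Ind}_{E/F}(\mathbf 1_E)$, with $E/F$ finite and $\chi$ a character of $E^\times$; each $\tau_{E,\chi}$ has dimension $0$. Setting $\psi_E:=\psi\circ\mathrm{Tr}_{E/F}$, the $\lambda$-free degree-$0$ inductivity together with the projection formula $\mathrm{Ind}_{E/F}(\chi)\otimes\chi_F=\mathrm{Ind}_{E/F}\!\big(\chi\cdot(\chi_F\circ N_{E/F})\big)$ gives
\begin{equation*}
 W(\tau_{E,\chi}\otimes\chi_F,\psi)=\frac{W\big(\chi\cdot(\chi_F\circ N_{E/F}),\,\psi_E\big)}{W(\chi_F\circ N_{E/F},\,\psi_E)}=\chi(c),
\end{equation*}
using (\ref{rankone}) over $E$ and the compatibility of $c$ just noted. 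On the determinant side, the formula for the determinant of an induced representation (Gallagher's theorem, as used in the proof of Proposition \ref{Proposition arithmetic form of determinant}) gives $\det(\tau_{E,\chi})=\chi\circ\mathrm{Ver}$, where the transfer $\mathrm{Ver}\colon F^\times\to E^\times$ is the natural inclusion under class field theory; hence $\det(\tau_{E,\chi})(c)=\chi(c)$. Thus the identity holds on each Brauer generator, and multiplicativity of both sides in $\sigma$ propagates it to all of $\sigma$, proving the lemma.

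The main obstacle is the rank-one stationary-phase evaluation (\ref{rankone}) and, intertwined with it, the choice of the threshold $m_\rho$. One must take $m_\rho$ large enough that $a(\chi_F)$ dominates $a(\chi)$ uniformly over the finitely many characters $\chi$ on the finitely many fields $E$ occurring in a fixed Brauer decomposition of $\sigma$, and large enough that the congruence $N_{E/F}(1+y)\equiv 1+\mathrm{Tr}_{E/F}(y)$ holds to the precision needed to identify the conductor element over $E$ with $c$. Once the rank-one case and this uniformity are secured, the inductive and determinant bookkeeping is routine.
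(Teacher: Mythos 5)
The paper contains no proof of this lemma: it is quoted verbatim from Bushnell--Henniart \cite{BH} (Proposition 29.4(4)), the result itself going back to Deligne and, in this generality, to Deligne--Henniart. So your attempt can only be measured against the known proofs. Your skeleton is the classical one, and several of its pieces are sound: the reduction to the dimension-zero virtual representation $\sigma=\rho-\dim(\rho)\cdot\mathbf{1}$, the use of inductivity in dimension zero (equation (\ref{eqn 2.4})), which indeed carries no $\lambda$-factor, the rank-one case via the Lamprecht--Tate stationary-phase argument (this is Deligne's formula, cf. \cite{D1}, Lemma 4.16), and the determinant bookkeeping $\det(\mathrm{Ind}_{E/F}\chi)=\Delta_{E/F}\cdot(\chi\circ\mathrm{Ver})$ with the transfer corresponding to the inclusion $F^\times\subset E^\times$.

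The genuine gap is the step you attribute to ``Brauer's theorem''. Brauer's theorem gives $\rho=\sum_i n_i\,\mathrm{Ind}_{E_i/F}(\chi_i)$ with $n_i\in\bbZ$; it does \emph{not} say that the degree-zero element $\sigma$ is an integral combination of your generators $\tau_{E,\chi}=\mathrm{Ind}_{E/F}(\chi)-\mathrm{Ind}_{E/F}(\mathbf{1}_E)$. Subtracting those generators from the Brauer decomposition leaves the error term
\[
\sum_i n_i\bigl(\mathrm{Ind}_{E_i/F}(\mathbf{1}_{E_i})-[E_i:F]\,\mathbf{1}_F\bigr),
\]
a degree-zero integral combination of \emph{permutation} representations, which is not reached by your generators through any result you quote. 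This is not a technicality: by inductivity in degree zero, the stability formula for the single representation $\mathrm{Ind}_{E/F}(\mathbf{1}_E)$ is equivalent to the Davenport--Hasse-type relation
\[
\lambda_{E/F}(\psi)\,W\bigl(\chi_F\circ N_{E/F},\,\psi\circ\mathrm{Tr}_{E/F}\bigr)=\Delta_{E/F}(c)\,W(\chi_F,\psi)^{[E:F]},
\]
which compares local constants over the two \emph{different} fields $E$ and $F$ for one and the same highly ramified $\chi_F$. Your rank-one identity compares two characters of a single field and cannot produce this. In other words, the hardest case of the theorem --- the reason the general statement is a theorem of Deligne--Henniart rather than a corollary of Deligne's rank-one lemma --- has been absorbed into a misquotation of Brauer, making the argument circular exactly at its central reduction.

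The gap can be closed, but it needs a supplementary lemma you would have to prove: for every finite quotient $G$ of $G_F$ (such $G$ are solvable), the degree-zero part of $R(G)$ is spanned \emph{with integral coefficients} by the elements $\mathrm{Ind}_H^G(\chi-\mathbf{1}_H)$. With rational coefficients this follows easily from Brauer, but rational coefficients are useless here, since the identity to be proved lives in $\bbC^\times$ and one cannot extract roots; the integral statement requires a real induction (for instance on a minimal normal subgroup, with Clifford theory handling $G=H\ltimes N$, $N$ elementary abelian, $H$ maximal and non-normal), and it is precisely here that solvability of local Galois groups enters. Granting it, your computation $W(\tau_{E,\chi}\otimes\chi_F,\psi)=\chi(c)=\det(\tau_{E,\chi})(c)$ does finish the proof, with one further caveat: the norm--trace compatibility $N_{E/F}(1+y)\equiv 1+\mathrm{Tr}_{E/F}(y)$ ``to the precision needed'' is itself delicate when $E/F$ is wildly ramified --- the naive estimate on the higher symmetric functions misses $P_F^{a(\chi_F)}$ by roughly $d_{E/F}/e_{E/F}$, and one must rewrite them as traces so that the different improves the valuations --- so that step, too, must be proved rather than asserted.
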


By using the above Lemma \ref{Lemma Deligne-Henniart}, we obtain the following theorem.

\begin{thm}\label{Theorem using Deligne-Henniart}
  Let $\rho=\rho_0\otimes\widetilde{\chi_F}$ be a Heisenberg representation of $G_F$ of dimension $d$ with $gcd(d,p)=1$, where 
  $\rho_0=\rho_0(X_\eta,\chi_0)$ is a minimal conductor Heisenberg representation. 
  If $a(\chi_F)\ge m_\rho \ge 2$, a sufficiently large number which depends on $\rho$, then we have 
 \begin{equation}\label{eqn 5.4.9}
  W(\rho,\psi)=W(\rho_0\otimes\widetilde{\chi_F})=W(\chi_F,\psi)^d\cdot\det(\rho_0)(c),
 \end{equation}
where $\psi$ is a nontrivial additive character of $F$, and $c:=c(\chi_F,\psi)\in F^\times$, satisfies 
\begin{center}
 $\chi_F(1+x)=\psi(c^{-1}x)$ for all $x\in P_{F}^{[\frac{a(\chi_F)}{2}]+1}$.
\end{center}
\end{thm}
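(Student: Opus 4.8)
The plan is to deduce the formula directly from the Deligne--Henniart Lemma \ref{Lemma Deligne-Henniart}, applied to the minimal conductor representation $\rho_0$ in the role of the general representation appearing in that lemma. The key observation is that the twist $\rho = \rho_0 \otimes \widetilde{\chi_F}$ is precisely of the form $\rho_0 \otimes \chi_F$ to which Lemma \ref{Lemma Deligne-Henniart} applies, once we identify $\dim(\rho_0) = d$ and read the tensor product through class field theory. So essentially all the content is inherited from the cited lemma, and the proof reduces to matching hypotheses and substituting the dimension.

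First I would record the inputs. By Corollary \ref{Corollary U-isotropic} and Lemma \ref{Lemma dimension equivalent}, since $\gcd(d,p)=1$ the representation $\rho_0 = \rho_0(X_\eta,\chi_0)$ is $U$-isotropic with $\dim(\rho_0) = \#\eta = d$, and by Remark \ref{Remark 5.1.22} it is of minimal Artin conductor $a_F(\rho_0) = d$. Applying Lemma \ref{Lemma Deligne-Henniart} to $\rho_0$ furnishes an integer $m_{\rho_0}$ so that whenever $a(\chi_F)\ge m_{\rho_0}$ and $c = c(\chi_F,\psi)$ satisfies $\chi_F(1+x) = \psi(c^{-1}x)$ for $x\in P_F^{[a(\chi_F)/2]+1}$, one has
\begin{equation*}
 W(\rho_0\otimes\widetilde{\chi_F},\psi) = W(\chi_F,\psi)^{\dim(\rho_0)}\cdot\det(\rho_0)(c).
\end{equation*}
Setting $m_\rho := \max(m_{\rho_0},2)$ and substituting $\dim(\rho_0)=d$ yields exactly equation (\ref{eqn 5.4.9}), since $W(\rho,\psi) = W(\rho_0\otimes\widetilde{\chi_F},\psi)$ by the very definition of $\rho$.

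The only points requiring care are bookkeeping rather than genuine obstacles. One must check that the element $c$ and the level $[a(\chi_F)/2]+1$ occurring in the hypothesis of Lemma \ref{Lemma Deligne-Henniart} coincide with those in the theorem's statement --- they do, verbatim. For consistency I would also note that the hypothesis $a(\chi_F)\ge m_\rho$ (with $m_\rho$ large) places $\rho$ in the non-minimal regime: by Lemma \ref{Lemma 5.1.23} the twisted conductor is then $a_F(\rho) = d\cdot a_F(\chi_F)$, which matches the setting of Theorem \ref{Theorem invariant for non minimal representation} and confirms that the present approach via Deligne--Henniart and the earlier approach via $\lambda$-functions are describing the same situation. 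The genuine analytic content --- the stabilization of $\epsilon$-factors under sufficiently ramified twists --- is entirely contained in the cited Lemma \ref{Lemma Deligne-Henniart}, so no new Gauss-sum estimate or $\lambda$-factor computation is needed; the work here is only to verify that a minimal conductor Heisenberg representation is an admissible input.
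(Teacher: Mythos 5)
Your proposal is correct and follows essentially the same route as the paper: both proofs rest entirely on applying Lemma \ref{Lemma Deligne-Henniart} to the minimal conductor representation $\rho_0$ and substituting $\dim(\rho_0)=d$. The only difference is that the paper additionally invokes Zink's result that there are $d^2$ characters $\chi$ of $F^\times/{F^\times}^d$ with $\rho_0\otimes\widetilde{\chi}=\rho_0$, together with Deligne's twisting formula $W(\chi_F\chi,\psi)^d=\chi(c)^d W(\chi_F,\psi)^d=W(\chi_F,\psi)^d$, to check explicitly that the right-hand side does not depend on which decomposition $\rho=\rho_0\otimes\widetilde{\chi_F}$ is chosen --- a consistency check your direct application renders automatic, since any admissible decomposition satisfies the hypotheses of the lemma and therefore computes the same value $W(\rho,\psi)$.
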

\begin{proof}

From Corollary \ref{Corollary U-isotropic} we know that all Heisenberg representation $\rho$ of $G_F$ of dimension prime to 
$p$ are precisely given as $\rho=\rho(X_\eta,\chi)$ for characters $\eta$ of $U_F/U_F^1$. 
Then from Remark \ref{Remark 5.1.22} we have here $a_K(\chi_0)=1$.
This implies that we always can choose a character $\chi_0$ of $K^\times$ with 
$a(\chi_0)=1$ such that all other $\chi_K$ are given as 
$$\chi_K=(\chi_F\circ N_{K/F})\cdot\chi_0,$$
for arbitrary characters $\chi_F$ of $F^\times$. Therefore the whole set of Heisenberg (U-isotopic) representations of $G_F$ of dimension
prime to $p$ is:
\begin{center}
 $\rho_0=\rho_0(G_K,\chi_0)$ and $\rho=\rho(G_K,\chi_K)$, 
 where $\chi_K=(\chi_F\circ N_{K/F})\cdot \chi_0$, and $\chi_F\in\widehat{F^\times}$ .
\end{center}
We also know that there are $d^2$ characters of $F^\times/{F^\times}^d$ such that 
$\rho_0\otimes\widetilde{\chi}=\rho_0$ (cf. \cite{Z2}, p. 303, Proposition 1.4). So we always have:
$$\rho=\rho_0\otimes\widetilde{\chi_F}=\rho_0\otimes\widetilde{\chi\chi_F},$$
where $\chi\in\widehat{F^\times/{F^\times}^d}$, and
$\widetilde{\chi_F}:W_F\to\bbC^\times$ corresponds to $\chi_F$ by class field theory.

 Let $\zeta$ be a $(q_F-1)$-st
root of unity. Since $U_F^1$ is a pro-p-group and $gcd(p,d)=1$, we have 
\begin{equation}\label{eqn 5.2.23}
  F^\times/{F^\times}^d=<\pi_F>\times<\zeta>\times U_F^1/<\pi_F^d>\times<\zeta>^d\times U_F^1\cong \bbZ_d\times\bbZ_d,
\end{equation}
that is, a direct product of two cyclic group of same order. Hence $F^\times/{F^\times}^d\cong\widehat{F^\times/{F^\times}^d}$.
Since ${F^\times}^d=<\pi_F^d>\times<\zeta>^d\times U_F^1$, and
$F^\times/{F^\times}^d\cong \bbZ_d\times\bbZ_d,$
we have $a(\chi)\le 1$ and $\#\chi$ is a divisor of $d$ for all 
$\chi\in\widehat{F^\times/{F^\times}^d}$. Now if we take a character $\chi_F$ of $F^\times$ conductor $\ge m_\rho\ge 2$, hence  
$a(\chi_F)\ge 2 a(\chi)$ for all $\chi\in \widehat{F^\times/{F^\times}^d}$. Then by 
using Deligne's formula (cf. \cite{D1}, Lemma 4.16) we have 
$$W(\chi_F\chi,\psi)^d=\chi(c)^d\cdot W(\chi_F,\psi)^d=W(\chi_F,\psi)^d,$$
where $c\in F^\times$ with $\nu_F(c)=a(\chi_F)+n(\psi)$, satisfies 
$$\chi_F(1+x)=\psi(c^{-1}x),\quad\text{for all $x\in F^\times$ with $2\nu_F(x)\ge a(\chi)$}.$$

Finally, by using Lemma \ref{Lemma Deligne-Henniart} we can write 
\begin{align*}
 W(\rho,\psi)
 &=W(\rho_0\otimes\widetilde{\chi_F\chi},\psi)= W(\chi_F\chi,\psi)^{\rm{dim}(\rho_0)}\cdot\det(\rho_0)(c(\chi_F,\psi))\\
 &=W(\chi_F,\psi)^d\cdot\det(\rho_0)(c).
\end{align*}

\end{proof}

\section{\textbf{Applications of Tate's root-of-unity criterion}}

Let $K/F$ be a finite
 Galois extension of the non-archimedean local field $F$, and $\rho:\mathrm{Gal}(K/F)\to \mathrm{Aut}_{\mathbb{C}}(V)$ a 
 representation of $\mathrm{Gal}(K/F)$ on a complex vector space $V$. 
 Let $P(K/F)$ denote the first {\bf wild} ramification group of $K/F$.
 Let $V^{P}$ be the subspace of all elements of $V$ fixed by $\rho(P(K/F))$. Then $\rho$ induces a representation:
 \begin{center}
  $\rho^{P}:\mathrm{Gal}(K/F)/P(K/F)\to\mathrm{Aut}_{\mathbb{C}}(V^{P})$.
 \end{center}
 Let $\overline{F}$ be an algebraic closure of the local field $F$, and $G_F=\rm{Gal}(\overline{F}/F)$ be the absolute Galois
 group for $\overline{F}/F$.
  Let $\rho$ be a representation of $G_F$.\\
{\bf Then by Tate, $W(\rho)/W(\rho^{P})$ is a root of a unity (cf. \cite{JT1}, p. 112, Corollary 4).}\\
Now let $\rho$ be an irreducible representation $G_F$, then either $\rho^P=\rho$, in which case
$\frac{W(\rho)}{W(\rho^P)}=1$, or else $\rho^P=0$, in this case from Tate's result we can say $W(\rho)$ is a root of unity.
Equivalently:\\
If $W(\rho)$ is not a root of unity then $\rho^P\ne 0$, hence $\rho^P=\rho$ because $\rho$ is irreducible. This means that 
all vectors $v\in V$ of the representation space are fixed under $P$ action on $V$. \\
In other words, if we consider $\rho$ as a 
homomorphism $\rho:G_F\to\rm{Aut}_\bbC(V)$ then the elements from $P$ are mapped to the identity, hence 
\begin{center}
 $\rho^P=\rho$ means $P\subset\rm{Ker}(\rho)$.
\end{center}
Therefore we can state the following lemma.
\begin{lem}
 If $\rho$ is an irreducible representation of $G_F$, such that the subgroup  $P\subset G_F$, of wild ramification
does {\bf not trivially} act on the representation space  $V$ (this gives $\rho^P\ne \rho$, i.e., $\rho^P=0$), 
then   $W(\rho)$  is a root of unity.
\end{lem}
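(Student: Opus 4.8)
The plan is to deduce the statement directly from Tate's result recorded immediately above the lemma, namely that $W(\rho)/W(\rho^P)$ is a root of unity (cf. \cite{JT1}, p. 112, Corollary 4). The only real work is to show that under the stated hypothesis one has $\rho^P = 0$, after which the conclusion is immediate.

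First I would observe that the wild ramification subgroup $P = P(K/F)$ is normal in $G_F$: the ramification subgroups in the lower numbering are normal in the full Galois group, and $P$ is such a group. Consequently the fixed subspace $V^P = \{v \in V : \rho(h)v = v \text{ for all } h \in P\}$ is stable under the whole $G_F$-action. Indeed, for $g \in G_F$, $h \in P$ and $v \in V^P$ one has $\rho(h)\rho(g)v = \rho(g)\rho(g^{-1}hg)v = \rho(g)v$, since $g^{-1}hg \in P$ by normality; thus $\rho(g)v \in V^P$, and $V^P$ is a $G_F$-subrepresentation of $V$.

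Next, because $\rho$ is irreducible, $V^P$ is either $\{0\}$ or all of $V$. The hypothesis says precisely that $P$ does not act trivially on $V$, so $V^P \neq V$; hence $V^P = \{0\}$, i.e. $\rho^P = 0$ is the zero representation. Now applying Tate's Corollary 4 gives that $W(\rho)/W(\rho^P)$ is a root of unity, and with the standard convention $W(0) = 1$ for the local constant of the zero (virtual) representation this reads $W(\rho) = W(\rho)/W(\rho^P)$, which is therefore a root of unity, as claimed.

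The argument is essentially bookkeeping rather than a genuine difficulty, since it merely formalizes the dichotomy already sketched in the paragraph preceding the lemma. The substantive input is Tate's theorem, which I am entitled to assume; the only points that must be handled with a little care are the normality of $P$ in $G_F$ (so that $V^P$ is $G_F$-stable and the irreducibility dichotomy genuinely applies) and the convention $W(0) = 1$, which is what lets the ratio $W(\rho)/W(\rho^P)$ collapse to $W(\rho)$ itself.
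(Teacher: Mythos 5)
Your proof is correct and takes essentially the same route as the paper: the paper's argument (given in the discussion immediately preceding the lemma) is exactly the irreducibility dichotomy $\rho^P=\rho$ or $\rho^P=0$ combined with Tate's criterion that $W(\rho)/W(\rho^P)$ is a root of unity. The extra details you supply --- normality of $P$ in $G_F$ so that $V^P$ is a subrepresentation, and the convention $W(0)=1$ --- simply make explicit what the paper leaves implicit.
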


Before going to our next results we need to recall some facts from class field theory.
Let $F$ be a non-archimedean local field. Let $F^{ab}$ be the maximal abelian extension of $F$ and $F_{nr}$ be the maximal 
unramified extension of $F$. Then by local class field theory there is a unique homomorphism
$$\theta_{F}:F^\times\to \rm{Gal}(F^{ab}/F)$$
having certain properties (cf. \cite{JM}, p. 20, Theorem 1.1). 
This local reciprocity map $\theta_F$ is continuous and injective with dense image. 
From class field theory we have the following commutative diagram  
$$\begin{array}{ccccccccc} &&& && v_F &&  \\
0 & \to & U_F & \to & F^\times & \to & \bbZ & \to & 0 \\
&& \quad\downarrow \theta_F && \quad\downarrow\theta_F && \quad\downarrow \rm{id} \\
0 & \to & I_F & \to & \rm{Gal}(F^{ab}/F) & \to & \widehat{\bbZ} & \to & 0, 
\end{array}$$
where $I_F:=\rm{Gal}(F^{ab}/F_{nr})$ is the inertia subgroup of $\rm{Gal}(F^{ab}/F)$, and $\rm{Gal}(F_{nr}/F)$ is identified  with 
$\widehat{\bbZ}$ (cf. \cite{CF}, p. 144). We also know that $\theta_F:U_F\to I_F$ is an isomorphism. Moreover the descending chain 
$$U_F\supset U_{F}^{1}\supset U_{F}^{2}\cdots$$
is mapped isomorphically by $\theta_F$ to the descending chain of ramification subgroups of $\rm{Gal}(F^{ab}/F)$ in the upper numbering.

Now let $I$ be the inertia subgroup of $G_F$. Let $P$ be the wild 
ramification subgroup of $G_F$.
Then we have $G_F\supset I\supset P$. Parallel with this we have 
$F^\times\supset U_F\supset U_{F}^{1}$. Then we have 
\begin{equation}\label{sequence 5.3.1}
 1\to I/P\cdot[G_F,G_F]\to G_F/P\cdot[G_F,G_F]\to G_F/I\to 1,
\end{equation}
and parallel 
\begin{equation}\label{sequence 5.3.2}
 1\to U_F/U_{F}^{1}\to F^\times/U_{F}^{1}\to F^\times/U_F\to 1.
\end{equation}
Now by class field theory the left terms of sequences (\ref{sequence 5.3.1}) and (\ref{sequence 5.3.2}) 
are isomorphic, but for the right terms we have $G_F/I$ is isomorphic to the total
completion of $\bbZ$
(because here $G_F/I$ is profinite group, hence compact). We also have  $F^\times/U_F=<\pi_F>\times U_F/U_F\cong\bbZ$.
Therefore sequence (\ref{sequence 5.3.2}) is dense in (\ref{sequence 5.3.1}) because $\bbZ$
is dense in the total completion $\widehat{\bbZ}$. But $\bbZ$ and $\widehat{\bbZ}$ have the same finite factor groups. 
{\bf As a consequence $F^\times/U_{F}^{1}$ is also dense in $G_F/P\cdot[G_F,G_F]$.}




Let $\rho$ be a Heisenberg representation of the absolute Galois group $G_F$.
In the following proposition we show that if $W(\rho)$ is not a root of unity, then $\rm{dim}(\rho)|(q_F-1)$, and 
$a_F(\rho)$ is not minimal.
\begin{prop}\label{Proposition 4.12}
 Let $F/\bbQ_p$ be a local field and let $q_F=p^s$ be the order of its finite residue field. If
$\rho=(Z_\rho,\chi_\rho)=\rho(X_\rho,\chi_K)$ is a Heisenberg representation of the absolute 
Galois group $G_F$ such that $W(\rho)$ is not a root of unity, 
then $dim(\rho)|(q_F-1)$ and $a_F(\rho)$ is not minimal.
\end{prop}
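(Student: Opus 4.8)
The plan is to derive both conclusions from Tate's root-of-unity criterion, reducing everything to a single structural fact about the alternating character $X_\rho$. A Heisenberg representation is by definition irreducible, so Tate's criterion (\cite{JT1}, p. 112, Corollary 4), recorded in the Lemma preceding this proposition, applies directly: since $W(\rho)$ is assumed \emph{not} to be a root of unity, we must have $\rho^P=\rho$, i.e. $P\subseteq\rm{Ker}(\rho)$, where $P\subset G_F$ is the wild ramification subgroup. Thus $\rho$ is tamely ramified. I would also point out at the outset that it is enough to prove the first assertion $\rm{dim}(\rho)\mid(q_F-1)$: once $\rm{dim}(\rho)$ is known to be prime to $p$, the non-minimality of $a_F(\rho)$ follows from Lemma \ref{Lemma 5.2.12}. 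Indeed, were $a_F(\rho)$ minimal, then $\rho$ would be a minimal conductor Heisenberg representation of dimension prime to $p$, and that lemma would force $W(\rho)$ to be a root of unity, contradicting the hypothesis.

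To prove $\rm{dim}(\rho)\mid(q_F-1)$ I would show that tameness forces $X_\rho$ to be $U$-isotropic. Write $Z_\rho=G_K$, so that $\rm{Rad}(X_\rho)=\cN_{K/F}$ and $\rm{dim}(\rho)^2=[K:F]$. From $P\subseteq\rm{Ker}(\rho)\subseteq Z_\rho=G_K$ we get that $K$ lies in the maximal tamely ramified extension of $F$, so $K/F$ is a tame abelian extension. A tame abelian extension has conductor exponent at most $1$, hence its norm group contains $U_F^1$; thus $U_F^1\subseteq\cN_{K/F}=\rm{Rad}(X_\rho)$. Consequently, for units $\varepsilon_1,\varepsilon_2\in U_F$ the value $X_\rho(\varepsilon_1,\varepsilon_2)$ depends only on the images of $\varepsilon_1,\varepsilon_2$ in $U_F/U_F^1$. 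But $U_F/U_F^1\cong k_F^\times$ is cyclic of order $q_F-1$, and any alternating character of a cyclic group is trivial. Therefore $X_\rho(\varepsilon_1,\varepsilon_2)=1$ for all $\varepsilon_1,\varepsilon_2\in U_F$, which is precisely the assertion that $X_\rho$ is $U$-isotropic in the sense of Definition \ref{Definition U-isotropic}.

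With $X_\rho$ $U$-isotropic, Lemma \ref{Lemma U-isotropic} gives $X_\rho=X_\eta$ with $\rm{Rad}(X_\eta)=\langle\pi_F^{\#\eta}\rangle\times\rm{Ker}(\eta)$, and the inclusion $U_F^1\subseteq\rm{Rad}(X_\eta)$ then forces $U_F^1\subseteq\rm{Ker}(\eta)$, so $\eta$ is a character of $U_F/U_F^1$. This is exactly condition (3) of Lemma \ref{Lemma dimension equivalent}, equivalent to its condition (2), namely $\rm{dim}(\rho)=\#\eta\mid(q_F-1)$. This establishes the first assertion, and the second follows from Lemma \ref{Lemma 5.2.12} as explained in the first paragraph.

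The load-bearing step is the passage from tameness of $\rho$ to $U$-isotropy of $X_\rho$, and the one point demanding care inside it is the inclusion $U_F^1\subseteq\cN_{K/F}$: I must confirm that $K$ is genuinely the field attached to the scalar group $Z_\rho$ (so that $P\subseteq Z_\rho$ yields tameness of $K/F$), and that tameness of the abelian extension $K/F$ really produces a norm group containing $U_F^1$. Once that inclusion is secured, the cyclicity of $U_F/U_F^1$ and the vanishing of alternating forms on cyclic groups close the argument with no further computation, and the non-minimality conclusion is a one-line appeal to Lemma \ref{Lemma 5.2.12}.
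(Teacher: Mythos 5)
Your proof is correct, and it shares the paper's endpoints: Tate's criterion is used in exactly the same way to get $P\subseteq\mathrm{Ker}(\rho)$, and the non-minimality of $a_F(\rho)$ is obtained at the end by contraposing Lemma \ref{Lemma 5.2.12}, precisely as the paper does. Where you genuinely diverge is in the middle, and in fact you run the argument in the opposite order. You establish U-isotropy \emph{first}: from $P\subseteq\mathrm{Ker}(\rho)\subseteq Z_\rho=G_K$ you conclude that $K/F$ is a tame abelian extension, invoke the class-field-theoretic fact that such an extension has conductor exponent $\le 1$, so $U_F^1\subseteq\cN_{K/F}=\mathrm{Rad}(X_\rho)$, and then kill $X_\rho$ on $U_F\wedge U_F$ because $U_F/U_F^1$ is cyclic and alternating bicharacters on cyclic groups are trivial; the divisibility $\dim(\rho)\mid(q_F-1)$ then drops out of Lemma \ref{Lemma U-isotropic} and Lemma \ref{Lemma dimension equivalent}. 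The paper instead proves $\dim(\rho)\mid(q_F-1)$ \emph{first}: it passes from $P\subseteq Z_\rho$ to the statement that $F^\times/\cN_{K/F}$ is a quotient of $F^\times/U_F^1$ (via the density of $F^\times/U_F^1$ in $G_F/P\cdot[G_F,G_F]$), computes explicitly with $x=\pi_F^a\zeta^b$ that the induced alternating character satisfies $X^{q_F-1}\equiv 1$ -- the same cyclicity idea you use, applied to both factors $\langle\pi_F\rangle$ and $\langle\zeta\rangle$ -- so that $X$ lives on $F^\times/({F^\times}^{q_F-1}U_F^1)\cong\bbZ_{q_F-1}\times\bbZ_{q_F-1}$, forcing the dimension to divide $q_F-1$; only then does it deduce U-isotropy of $X_\rho$ from Lemma \ref{Lemma dimension equivalent}, in the direction opposite to yours. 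Your route is shorter and more structural: it replaces the paper's explicit computation and the density discussion by the single standard fact that tame abelian extensions have norm groups containing $U_F^1$ -- correctly identified by you as the load-bearing step, and it does hold (it is the conductor characterization of tameness, consistent with the paper's own Remark citing \cite{FV}, p.\ 115). What the paper's version buys in exchange is self-containedness and an explicit view of the bicyclic quotient $\bbZ_{q_F-1}\times\bbZ_{q_F-1}$, which matches the structure it exploits elsewhere (e.g.\ in Example \ref{Example for Heisenberg reps}). Both arguments are valid.
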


\begin{proof}
Let $P$ denote the wild ramification subgroup of $G_F$.
By Tate's root-of-unity criterion, we know that $\gamma:=\frac{W(\rho)}{W(\rho^P)}$ is a root of unity. If $W(\rho)$ is not a 
root of unity, then $\rho=\rho^P$, otherwise $W(\rho)$ must be a root of unity. Again $\rho^P=\rho$ implies 
$P\subset\rm{Ker}(\rho)\subset Z_\rho\subset G_F$. So $G_F/Z_\rho$ is a quotient of $G_F/P$, hence $F^\times/U_F^1$.
 
Moreover,  from the dimension formula (\ref{eqn dimension formula}), we have 
$$\rm{dim}(\rho)=\sqrt{[G_F:Z_\rho]}=\sqrt{[K:F]}=\sqrt{[F^\times:\cN_{K/F}]},$$
where $Z_\rho=G_K$ and $\rm{Rad}(X)=\cN_{K/F}$, hence $F^\times/N$ is a quotient group of $F^\times/U_F^1$.
 Therefore the alternating character $X_\rho$ induces an alternating 
character $X$ on $F^\times/U_F^1.$ We also know that  
$F^\times=<\pi_F>\times<\zeta>\times U_{F}^{1}$, where $\zeta$ is a root of unity of order $q_F-1$. This implies 
$F^\times/U_{F}^{1}=<\pi_F>\times<\zeta>$.
So each element $x\in
F^\times/U_F^1$ can be written as $x= \pi_{F}^a\cdot \zeta^b$, where $a,b\in\bbZ$. 
We now take $x_1=\pi_{F}^{a_1}\zeta^{b_1}, x_2=\pi_{F}^{a_2}\zeta^{b_2}\in F^\times/U_{F}^{1}$, where $a_i,b_i\in\bbZ(i=1,2)$, then 
\begin{align*}
 X(x_1,x_2)
 &= X(\pi_{F}^{a_1}\zeta^{b_1},\; \pi_{F}^{a_2}\zeta^{b_2})\\
 &= X(\pi_{F}^{a_1},\zeta^{b_2})\cdot X(\zeta^{b_1},\pi_{F}^{a_2})\\
 &=\chi_\rho([\pi_{F}^{a_1},\zeta^{b_2}])\cdot\chi_\rho([\zeta^{b_1},\pi_{F}^{a_2}]).
\end{align*}
But this implies  $X^{q_F-1}\equiv 1$ because $\zeta^{q_F-1}=1$,
which means that $X$ is actually an alternating character on  $F^\times/({F^\times}^{(q_F-1)} U_F^1),$ and therefore
$G_F/G_K$ is actually a quotient of $F^\times/({F^\times}^{(q_F-1)} U_F^1).$ 
We also know that $U_F^1$ is a pro-p-group and therefore 
$$U_F^1=(U_F^1)^{q_F-1}\subset F^\times.$$
Thus the cardinality of 
$F^\times/({F^\times}^{(q_F-1)} U_F^1)$ is $(q_F-1)^2$ because 
$$F^\times/({F^\times}^{(q_F-1)} U_F^1)\cong \bbZ/(q_F-1)\bbZ\times<\zeta>\cong \bbZ_{q_F-1}\times\bbZ_{q_F-1}.$$
Therefore $\rm{dim}(\rho)$ divides $q_F-1.$

Since $\rm{dim}(\rho)|q_F-1$, from Lemma \ref{Lemma dimension equivalent} the alternating character
$X_\rho$ is U-isotropic and $X_\rho=X_\eta$ for a character $\eta:U_F/U_F^1\to\bbC^\times$. Since $\rho=\rho(X_\eta,\chi_K)$
is U-isotropic, from Proposition \ref{Proposition conductor}, $a_F(\rho)$ is a multiple of $\rm{dim}(\rho)$. Moreover, by the given 
condition, $W(\rho)$ is not a root of unity, hence $a_F(\rho)$ is not minimal, otherwise if $a_F(\rho)$ is minimal, then from 
Lemma \ref{Lemma 5.2.12} $W(\rho)$ is a root of unity.

\end{proof}

\vspace{2cm}
\newpage

\textbf{Acknowledgements.} I would like to thank Prof E.-W. Zink for suggesting this problem and his constant 
valuable advice also I thank to my adviser Prof. Rajat Tandon for his continuous encouragement. I
extend my gratitude to Prof. Elmar Grosse-Kl\"{o}nne for providing very good mathematical environment during stay in
Berlin. I am also grateful to Berlin Mathematical School for their financial help. 


\end{document}